\newtheorem{theorem}{Theorem}[section]
\newtheorem{proposition}{Proposition}[section]
\newtheorem{corollary}{Corollary}[section]
\newtheorem{remark}{Remark}[section]
\newtheorem{lemma}{Lemma}[section]
\numberwithin{equation}{section}
\def\d{\mathrm{d}}
\def\R{\mathbb{R}}
\def\T{\mathbb{R}}
\def\eps{\varepsilon}
\def\div{\mathrm{div}}
\newcounter{wronumber}\setcounter{wronumber}{1}
\begin{document}
\title[From VML to incompressible NSFM with Ohm's law]
			{From Vlasov-Maxwell-Landau system with Coulomb potential to two-fluid incompressible Navier-Stokes-Fourier-Maxwell system with Ohm's law}

\author[Yuanjie Lei]{Yuanjie Lei}
\address[Yuanjie Lei]
{\newline School of Mathematics and Statistics, Huazhong University of Sciences and Technology, Wuhan, 430074, P. R. China}
\email{leiyuanjie@hust.edu.cn}

\author[Ning Jiang]{Ning Jiang}
\address[Ning Jiang]{\newline School of Mathematics and Statistics, Wuhan University, Wuhan, 430072, P. R. China}
\email{njiang@whu.edu.cn}

\thanks{\today}

\maketitle

\begin{abstract}
   In the diffusive regime, we obtain the uniform in Knudsen number estimate on the two-species Vlasov-Maxwell-Landau system with Coulomb potential around the global equilibrium. As a consequence, we justify the limit to the two-fluid incompressible Navier-Stokes-Fourier-Maxwell (NSFM) system with Ohm's law. This is the first result on the global in time uniform in Knudsen number estimate on the Boltzmann and Landau type equations with Coulomb potential coupled Vlasov-Maxwell equations.  \\

\noindent\textsc{Keywords.} two-species Vlasov-Maxwell-Landau system; Coulomb potential; two-fluid incompressible Navier-Stokes-Fourier-Maxwell system; Ohm's law; global classical solutions; uniform energy bounds; convergence for classical solutions. 

\end{abstract}





\section{Introduction.}\label{Sec:Introduction}

\subsection{Vlasov-Maxwell-Landau system}
The two-species Vlasov-Maxwell-Landau system (in brief, VML) is a fundamental system in kinetic theory.  Precisely,  VML system consists the following equations:
\begin{equation}\label{VMB-0}
    \left\{
    \begin{array}{c}
      \partial_t F^+ + v\cdot \nabla_x F^+ + \tfrac{q^+}{m^+}(E + v \times B)\cdot\nabla_v F^+ = Q(F^+, F^+) + Q(F^+, F^-)\,,\\[2mm]
            \partial_t F^- + v\cdot \nabla_x F^- - \tfrac{q^-}{m^-}(E + v \times B)\cdot\nabla_v F^- = Q(F^-, F^-) + Q(F^-, F^+)\,,\\[2mm]
     {\mu}_0\eps_0\partial_t E - \nabla_x \times B = -{\mu}_0\int_{\mathbb{R}^3}(q^+F^+- q^- F^-)v\,\mathrm{d}v\,,\\[2mm]
      \partial_t B + \nabla_{\!x} \times E = 0\,,\\[2mm]
      \div_x E = \tfrac{1}{\eps_0}\int_{\mathbb{R}^3}(q^+F^+- q^- F^-)\,\mathrm{d}v\,,\quad\!\mbox{and}\quad\!
      \div_x B = 0\,.
        \end{array}
  \right.
\end{equation}

 The collision between charged particles is given by
\begin{equation*}
   \begin{split}
     Q(F_1,F_2)=&\nabla_{ v}\cdot\int_{\mathbf{R}^3}\Phi( v- v')
     \Big\{F_1( v')\nabla_{ v}F_2( v)-\nabla_{ v'}F_1( v')F_2( v)\Big\}d v'\\
     =&\sum_{i,j=1}^3\partial_i\int_{\mathbf{R}^3}\Phi^{ij}( v- v')
     \Big\{F_1( v')\partial_jF_2( v)-\partial_jF_1( v)F_2( v')\Big\}d v',
   \end{split}
\end{equation*}
where $\partial_i=\frac{\partial}{\partial v_i}$ for $i=1,2,3$ and the non-negative $3\times 3$ matrix $\Phi(v)=\left(\Phi^{ij}( v)\right)_{3\times 3}$ is the fundamental Landau (or Fokker-Planck) kernel:
\begin{equation*}
  \Phi^{ij}( v)=\bigg(\delta_{ij}-\frac{ v_i v_j}{| v|^2}\bigg)| v|^{-1},
\end{equation*}
which corresponds to the Coulomb potential for the classical Landau operator which is originally found by Landau (1936).

\subsection{Incompressible NSFM limit of VML}

The goal of this paper is to justify the incompressible Navier-Stokes-Fourier-Maxwell (NSFM) equations from VML under some suitable scaling. Inspired from the corresponding scalings in Vlasov-Maxwell-Boltzmann (VMB) analogue considered by Arsenio and Saint-Raymond \cite{Arsenio-SRM-2016}, we rescale the two-species VML by the Kundsen number $\eps$, which is the ratio of the mean free path and the macroscopic length scale in the Navier-Stokes scaling (also called diffusive scaling):
\begin{equation}\label{VMB-F}
  \left\{
    \begin{array}{l}
      \partial_t F_\eps^\pm + \tfrac{1}{\eps} v \cdot \nabla_x F_\eps^\pm \pm \tfrac{1}{\eps} ( \eps E_\eps + v \times B_\eps ) \cdot \nabla_v F_\eps^\pm = \tfrac{1}{\eps^2} Q (F_\eps^\pm, F_\eps^\pm) + \tfrac{1}{\eps^2} Q (F_\eps^\pm , F_\eps^\mp) \,,\\[2mm]
      \partial_t E_\eps - \nabla_x \times B_\eps = - \tfrac{1}{\eps^2} \int_{\R^3} (F_\eps^+ - F_\eps^-) v \d v \,,\\[2mm]
      \partial_t B_\eps + \nabla_x \times E_\eps = 0 \,,\\[2mm]
      \div_x E_\eps = \tfrac{1}{\eps} \int_{\R^3} ( F_\eps^+ - F_\eps^- ) \d v \,,\\[2mm]
      \div_x B_\eps = 0
    \end{array}
  \right.
\end{equation}
on $\mathbb{R}^3 \times \R^3$, with initial data
\begin{equation}\label{IC-VMB-F}
  \begin{aligned}
    F_\eps^\pm (0, x, v) = F_\eps^{\pm, in} (x,v) \in \R \,, \quad E_\eps (0,x) = E_\eps^{in} (x) \in \R^3 \,, \quad B_\eps (0, x) = B_\eps^{in} (x) \in \R^3 \,.
  \end{aligned}
\end{equation}

It is well-known that the global equilibrium for the two-species VML is $[{M(v)},{M}(v)]$, where the normalized global {\em Maxwellian} is
\begin{equation}\label{GM}
 {M}(v) = (2 \pi)^{-\frac{3}{2}} \exp\left(- \tfrac{|v|^2}{2}\right) \,.
\end{equation}

The two-fluid incompressible NSFM with Ohm's law can be derived from the scaled VML system \eqref{VMB-F} by considering the fluctuation  $F_\eps^\pm (t,x,v) = {M}(v)  + \eps \sqrt{{M(v)}} f_\eps^\pm (t,x,v) $ when the Knudsen number $\eps$ tends to zero. This leads to the perturbed two-species VMB
\begin{equation}\label{VML-F}
  \left\{
    \begin{array}{l}
      \partial_t f_\eps + \tfrac{1}{\eps} \big[ v \cdot  \nabla_x f_\eps + q_0 ( \eps E_\eps + v \times B_\eps ) \cdot \nabla_v f_\eps \big] + \tfrac{1}{\eps^2} \mathscr{L} f_\eps - \tfrac{1}{\eps}  (E_\eps \cdot v) \sqrt{{M}}  q_1\\[2mm]
       \qquad \qquad = \tfrac{1}{2} q_0 (E_\eps \cdot v) f_\eps + \tfrac{1}{\eps} \mathscr{T} (f_\eps, f_\eps) \,,\\[2mm]
      \partial_t E_\eps - \nabla_x \times B_\eps = - \tfrac{1}{\eps} \int_{\R^3} f_\eps \cdot {q_1} v \sqrt{{M}} \d v \,,\\[2mm]
      \partial_t B_\eps + \nabla_x \times E_\eps = 0 \,,\\[2mm]
      \div_x E_\eps = \int_{\R^3} f_\eps \cdot {q_1} \sqrt{{M}} \d v \,, \ \div_x B_\eps = 0 \,,
    \end{array}
  \right.
\end{equation}
where $f_\eps = [ f_\eps^+ , f_\eps^- ]$ represents the vector in $\R^2$ with the components $f_\eps^\pm$, the $2 \times 2$ diagonal matrix $q_0 = \textit{diag} (1, -1)$, the vector $q_1 = [1, -1]$, the linearized collision operator $\mathscr{L}f$ and the nonlinear collision term $\mathscr{T} (f,f)$ are respectively defined by
\[\mathscr{L}f=[\mathscr{L}_+f,\mathscr{L}_-f],\quad\quad\quad\mathscr{T}(f,g)=[\mathscr{T}_+(f,g),\mathscr{T}_-(f,g)]\]
with
\begin{equation*}
\begin{aligned}
\mathscr{L}_\pm f =& -{M}^{-1/2}
\left\{{Q\left( {M},{M}^{1/2}(f_\pm+f_\mp)\right)+ 2Q\left( {M}^{1/2}f_\pm, {M}\right)}\right\},\\[2mm]
\mathscr{T}_{\pm}(f,g) =&{M}^{-1/2}Q\left({M}^{1/2}f_{\pm},{M}^{1/2}g_\pm\right)
+{M}^{-1/2}Q\left({M}^{1/2}f_{\pm},{M}^{1/2}g_\mp\right).
\end{aligned}
\end{equation*}
For the linearized Landau collision operator $\mathscr{L}$, it is well known, cf. \cite{Guo-CMP-02}, that it is non-negative and the null space $\mathcal{N}$ of $\mathscr{L}$ is given by
\begin{equation*}
{\mathcal{ N}}={\textrm{span}}\left\{[1,0] , [0,1], [v_i,v_i] (1\leq i\leq3),[|v|^2,|v|^2]\right\}{M}^{1/2}.
\end{equation*}
If we define ${\bf P}$ as the orthogonal projection from $L^2({\mathbb{R}}^3_ v)\times L^2({\mathbb{R}}^3_ v)$ to $\mathcal{N}$, then for any given function $f(t, x, v )\in L^2({\mathbb{R}}^3_ v)$, one has
\begin{equation*}
  {\bf P}f =\left\{{\rho_+(t, x)[1,0]+\rho_-(t, x)[0,1]+\sum_{i=1}^{3}u_i(t, x) [1,1]v_i+\theta(t, x)[1,1](| v|^2-3)}\right\}{M}^{1/2}
\end{equation*}
with
\begin{equation*}
  \rho_\pm=\int_{{\mathbb{R}}^3}{M}^{1/2}f_\pm \,\d v,\quad
  u_i=\frac12\int_{{\mathbb{R}}^3} v  _i {M}^{1/2}(f_++f_-)\,\d v,\quad
  \theta=\frac{1}{12}\int_{{\mathbb{R}}^3}(| v|^2-3){M}^{1/2}(f_++f_-) \,\d v.
\end{equation*}
Therefore, we have the following macro-micro decomposition with respect to the given global Maxwellian ${M}$:
\begin{equation}\label{macro-micro}
 f(t,x, v)={\bf P}f(t,x, v)+\{{\bf I}-{\bf P}\}f(t, x, v)\,,
\end{equation}
where ${\bf I}$ denotes the identity operator and ${\bf P}f$ and $\{{\bf I}-{\bf P}\}f$ are called the macroscopic and the microscopic component of $f(t,x,v)$, respectively.

In the framework of renormalized solutions, using the moment method, refined relative entropy and compactness methods, in \cite{Arsenio-SRM-2016} the following formal convergence was rigorously justified:
  \begin{equation}\label{NSFM-Limit}
    \begin{aligned}
      f_\eps \rightarrow ( \rho + \tfrac{1}{2} n ) \tfrac{{q_1} + {q_2}}{2} \sqrt{{M}} + ( \rho - \tfrac{1}{2} n ) \tfrac{{q_2} - {q_1}}{2} \sqrt{{M}} + u \cdot v {q_2} \sqrt{{M}} + \theta ( \tfrac{|v|^2}{2} - \tfrac{3}{2} ) \sqrt{{M}}\,,
    \end{aligned}
  \end{equation}
where the vector $q_2 = [1, 1]$ and $(\rho, n, u, \theta, E, B)$ satisfies the following two-fluid incompressible NSFM:
\begin{equation}\label{INSFM-Ohm}
  \left\{
    \begin{array}{l}
      \partial_t u + u \cdot \nabla_x u - {\mu} \Delta_x u + \nabla_x p = \tfrac{1}{2} ( n E + j \times B ) \,, \qquad \div_x \, u = 0 \,, \\ [2mm]
      \partial_t \theta + u \cdot \nabla_x \theta - \kappa \Delta_x \theta = 0 \,, \qquad\qquad\qquad\qquad\qquad\quad\ \, \rho + \theta = 0 \,, \\ [2mm]
      \partial_t E - \nabla_x \times B = - j \,, \qquad\qquad\qquad\qquad\qquad\qquad\ \ \ \, \div_x \, E = n \,, \\ [2mm]
      \partial_t B + \nabla_x \times E = 0 \,, \qquad\qquad\qquad\qquad\qquad\qquad\qquad \div_x \, B = 0 \,, \\ [2mm]
      \qquad \qquad j - nu = \sigma \big( - \tfrac{1}{2} \nabla_x n + E + u \times B \big) \,, \qquad\quad\,\ w = \tfrac{3}{2} n \theta \,,
    \end{array}
  \right.
\end{equation}
where the viscosity ${\mu}$, the heat conductivity $\kappa$ and the electrical conductivity $\sigma$ are given by
\begin{equation}\label{VHE-Coefficients}
  \begin{aligned}
    {\mu} = \tfrac{1}{10} \int_{\R^3} A : \widehat{A} M \d v \,, \quad \kappa = \tfrac{2}{15} \int_{\R^3} B \cdot \widehat{B} M \d v \quad \textrm{and} \quad \sigma = \tfrac{2}{3} \int_{\R^3} \Phi \cdot \widetilde{\Phi} M \d v \,.
  \end{aligned}
\end{equation}
For the derivation of \eqref{VHE-Coefficients}, i.e. the relation of  ${M}$, $\kappa$, $\sigma$ with $A$, $\widehat{A}$, $B$, $\widehat{B}$, $\Phi$ and $\widetilde{\Phi}$, see \cite{Arsenio-SRM-2016}. We will not give detailed formal derivation here. In our proof of Theorem \ref{Main-Thm-1}, we indeed provide how the NSFM (with all transport coefficients) can be derived from VML with Coulomb potential.

\subsection{Hydrodynamic limits of Boltzmann type equations}
One of the most important features of the Boltzmann equations (or more generally, kinetic equations) is their connections to the fluid equations. The so-called fluid regimes of the Boltzmann equations are those of asymptotic dynamics of the scaled Boltzmann equations when the Knudsen number $\eps$ is very small. Justifying these limiting processes rigorously has been an active research field from late 70's. Among many results obtained, the main contributions are the incompressible Navier-Stokes and Euler limits. There are two types of results in this field:

{\bf (Type-I)}, First obtaining the solutions of the scaled Boltzmann equation with {\em uniform} bounds in the Knudsen number $\eps$, then extracting a subsequence converging (at least weakly) to the solutions of the fluid equations as $\eps\rightarrow 0$.

{\bf (Type-II)}, First obtaining the solutions for the limiting fluid equations, then constructing a sequence of special solutions (near the Maxwellians) of the scaled Boltzmann equations for small Knudsen number $\eps$.

The key difference between the results of Type-I and  type-II are: in type-I, the solutions of the fluid equations are {\em not} known a priori, and are completely obtained from taking limits from the Boltzmann equations. In short, it is ``from kinetic to fluid"; In type-II, the solutions of the fluid equations are {\em known} first. In short, it is ``from fluid to kinetic". Usually, type-I results are harder to be achieved since it must obtain enough uniform (with respect to $\eps$) bounds for solutions of the scaled kinetic equations then compactness arguments give the convergence. This approach automatically provides the existence of both the original kinetic equations and the limiting macroscopic equations. On the other hand, type-II approach needs to employ the information of the limiting equations, then prove the solutions of the original kinetic equations with a {\em special} form (usually Hilbert expansion).

We remark that this classification of two approaches also appears in other asymptotical problems. For example, in their work on Kac's program \cite{Mischler-Mouhot}, Mischler and Mouhot called the type-I as ``bottom-up" and type-II as ``top-down". We quote their words here: {\em ...our answer is an ``inverse" answer inthe sense that our methodology is ``top-down" from the limit equation to the many-particle system rather than ``bottom-up" as was expected by Kac.}

The most successful achievement in type-I is the so-called BGL (named after Bardos, Golse and Levermore) program.  From late 80's, Bardos, Golse and Levermore initialized the program to justify Leray's solutions of the incompressible Navier-Stokes equations from DiPerna-Lions' renormalized solutions \cite{BGL-1991-JSP, BGL-CPAM1993}. They proved the first convergence result with five additional technical assumptions. After ten years efforts by Bardos, Golse, Levermore, Lions, Masmoudi and Saint-Raymond, see for example \cite{BGL3, LM3, LM4, GL}, the first complete convergence result without any additional compactness assumption was proved by Golse and Saint-Raymond in \cite{G-SRM-Invent2004} for cutoff Maxwell collision kernel, and in \cite{G-SRM2009} for hard cutoff potentials. Later on, it was extended by Levermore and Masmoudi \cite{LM} to include soft potentials. Recently Ars\'enio got the similar results for non-cutoff case \cite{Arsenio, Arsenio-Masmoudi} based on the existence result of \cite{AV-CPAM2002}. Furthermore, by Jiang, Levermore, Masmoudi and Saint-Raymond, these results were extended to bounded domain where the Boltzmann equation was endowed with the Maxwell reflection boundary condition \cite{Masmoudi-SRM-CPAM2003, JLM-CPDE2010, JM-CPAM2017}, based on the solutions obtained by Mischler \cite{mischler2010asens}.

Another direction in type-I is in the context of classical solutions. The first work in this type is Bardos-Ukai \cite{b-u}. They started from the scaled Boltzmann equation for cut-off hard potentials, and proved the global existence of classical solutions $g_\eps$ uniformly in $0< \eps <1$. The key feature of Bardos-Ukai's work is that they only need the smallness of the initial data, and did not assume the smallness of the Knudsen number $\eps$ in uniform estimate. After having the uniform in $\eps$ solutions $g_\eps$, taking limits can provide a classical solution of the incompressible Navier-Stokes equations with small initial data. Bardos-Ukai's approach heavily depends on the sharp estimates from the spectral analysis on the linearized Boltzmann operator $\mathcal{L}$, and the semigroup method (the semigroup generated by the scaled linear operator $\eps^{-2}\mathcal{L}+\eps^{-1}v\cdot\nabla_{\!x}$). It seems that it is hardly extended to cutoff soft potential, and even harder for the non-cutoff cases, since it is well-known that the operator $\mathcal{L}$ has continuous spectrum in those cases. On the torus, semigroup approach was used by Briant \cite{Briant-JDE-2015} and Briant, Merino-Aceituno and Mouhot \cite{BMM-arXiv-2014} to prove incompressible Navier-Stokes limit by employing the functional analysis breakthrough of Gualdani-Mischler-Mouhot \cite{GMM}. Again, their results are for cut-off kernels with hard potentials. Recently, there is type-I convergence result on the incompressible Navier-Stokes limit of the Boltzmann equation by the first author and his collaborators. In \cite{JXZ-Indiana2018}, the uniform in $\eps$ global existence of the Boltzmann equation with or without cutoff assumption was obtained and the global energy estimates were established.

Most of the type-II results are based on the Hilbert expansion  and obtained in the context of classical solutions. It was started from Nishida and Caflisch's work on the compressible Euler limit \cite{Nishida, Caflisch, KMN}. Their approach was revisitied by Guo, Jang and Jiang, combining with nonlinear energy method to apply to the acoustic limit \cite{GJJ-KRM2009, GJJ-CPAM2010, JJ-DCDS2009}. After then this process was used for the incompressible limits, for examples, \cite{DEL-89} and \cite{Guo-2006-CPAM}. In \cite{DEL-89}, De Masi-Esposito-Lebowitz considered  Navier-Stokes limit in dimension 2. More recently, using the nonlinear energy method, in \cite{Guo-2006-CPAM} Guo justified the Navier-Stokes limit (and beyond, i.e. higher order terms in Hilbert expansion). This was extended in \cite{JX-SIMA2015} to more general initial data which allow the fast acoustic waves.  These results basically say that, given the initial data which is needed in the classical solutions of the Navier-Stokes equations, it can be constructed the solutions of the Boltzmann equation of the form $F_\eps = M +  \sqrt{M}(\eps g_1+ \eps^2 g_2 + \cdots + \eps^n g_n+ \eps^k g^R_\eps)$, where $g_1, g_2, \cdots $ can be determined by the Hilbert expansion, and $g^R_\eps$ is the error term. In particular, the first order fluctuation $g_1 = \rho_1 + \mathrm{u}_1\!\cdot\! v + \theta_1(\frac{|v|^2}{2}-\frac{3}{2})$, where $(\rho_1, \mathrm{u}_1, \theta_1)$ is the solutions to the incompressible Navier-Stokes equations.

Regarding to the Vlasov-Poisson-Boltzmann (VPB) system and VMB, the corresponding fluid limits are more fruitful since the effects of electric and magnetic fields are considered. Analytically, the limits from scaled VPB are similar to the Boltzmann equations because VPB couples with an extra Poisson equation, which has very good enough regularity. This usually does not bring much difficulties. For the limits of VPB, see recent results \cite{GJL-2019, JZ-2020}.

However, for the VMB, the situation is quite different. The corresponding hydrodynamic limits are much harder, even at the formal level, since it is coupled with Maxwell equations which are essentially hyperbolic. In a recent remarkable breakthrough \cite{Arsenio-SRM-2016}, Ars\'enio and Saint-Raymond not only proved the existence of renormalized solutions of VMB,  more importantly, also justified various limits (depending on the scalings) towards incompressible viscous electro-magneto-hydrodynamics. Among these limits, the most singular one is to the two-fluid incompressible Navier-Stokes-Fourier-Maxwell (in brief, NSFM) system with Ohm's law.

The proofs in \cite{Arsenio-SRM-2016} for justifying the weak limit from a sequence of solutions of VMB  to a dissipative solution of incompressible NSFM  are extremely hard. Part of the reasons are, besides many difficulties of the existence of renormalized solutions of VMB itself, the current understanding for the incompressible NSFM with Ohm's law is far from complete. From the point view of mathematical analysis, NSFM have the behavior which is more similar to the much less understood incompressible Euler equations than to the Navier-Stokes equations. That is the reason in \cite{Arsenio-SRM-2016}, they consider the so-called dissipative solutions of NSFM rather than the usual weak solutions. The dissipative solutions were introduced by Lions for 3-dimensional incompressible Euler equations (see section 4.4 of \cite{Lions-1996}).

The studies of incompressible NSFM just started in recent years (for the introduction of physical background, see \cite{Biskamp, Davidson}).  For weak solutions, the existence of global-in-time Leray type weak solutions are completely open, even in 2-dimension.  The first breakthrough comes from Masmoudi \cite{Masmoudi-JMPA2010}, who proved in 2-dimensional case the existence and uniqueness of global strong solutions of incompressible NSFM (in fact, the system he considered in \cite{Masmoudi-JMPA2010} is little different with the NSFM in this paper, but the analytic analysis are basically the same) for the initial data $(v^{in}, E^{in}, B^{in})\in L^2(\mathbb{R}^2)\times (H^s(\mathbb{R}^2))^2$ with $s>0$. It is notable that in \cite{Masmoudi-JMPA2010}, the divergence-free condition of the magnetic field $B$ or the decay property of the linear part coming from Maxwell's equations is {\em not} used.  Ibrahim and Keraani \cite{Ibrahim-Keraani-2011-SIMA} considered the data $(v^{in}, E^{in}, B^{in}) \in \dot B^{1/2}_{2,1}(\mathbb{R}^3)\times (\dot H^{1/2}(\mathbb{R}^3))^2$ for 3-dimensional, and $(v_0, E_0, B_0)\in \dot B^0_{2,1}(\mathbb{R}^2)\times (L^2_{log}(\mathbb{R}^2))^2$ for 2-dimensional case. Later on, German, Ibrahim and Masmoudi \cite{GIM2014} refines the previous results by running a fixed-point argument to obtain mild solutions, but taking the initial velocity field in the natural Navier-Stokes space $H^{1/2}$. In their results the regularity of the initial velocity and electromagnetic fields is lowered. Furthermore, they employed an $L^2L^\infty$-estimate on the velocity field, which significantly simplifies the fixed-point arguments used in \cite{Ibrahim-Keraani-2011-SIMA}. For some other asymptotic problems related, say, the derivation of the MHD from the Navier-Stokes-Maxwell system in the context of weak solutions, see Ars\'enio-Ibrahim-Masmoudi \cite{AIM-ARMA-2015}. Recently, in \cite{JL-CMS-2018} the authors of the current paper proved the global classical solutions of the incompressible NSFM with small intial data, by using the decay properties of both the electric field and  the wave equation with linear damping of the divergence free magnetic field. This key idea was already used in \cite{GIM2014}.

Regarding to the hydrodynamic limits of VMB in the context of classical solutions, the only previous result belongs to Jang \cite{Jang-ARMA2008}. In fact, in \cite{Jang-ARMA2008}, it was taken a very special scaling under which the magnetic effect appeared only at a higher order. As a consequence, it vanished in the limit as the Knudsen number $\eps\rightarrow 0$. So in the limiting equations derived in \cite{Jang-ARMA2008}, there was no equations for the magnetic field at all. We emphasize that in \cite{Jang-ARMA2008}, the author took the Hilbert expansion approach, and the classical solutions to the VMB were based on those of the limiting equations. So the convergence results in \cite{Jang-ARMA2008} belong to the type-II results, as we named in the last subsection.

The main purpose of this paper is to obtain the type-I results for the hydrodynamic limits to Cauchy problem of the Vlasov-Maxwell-Landau system \eqref{VMB-F}-\eqref{IC-VMB-F} with Coulomb potential, when the Knudsen number $\eps$ tends to zero. The key issue is that we do {\em not} employ the Hilbert expansion method, which as mentioned above, has two disadvantages: first, it only gives a special type of the solution of the VML. In other words, the solution with the expansion form of some finite terms. Second, the solution to the limiting equations must be known {\em before} the existence of VML. The approach employed in this paper is to obtain a family of the global in time solutions $F_\eps^\pm (t,x,v)$ to the scaled VML with energy estimate uniform in $0< \eps < 1$. Based on this uniform energy estimate, the moments of the fluctuations of $F_\eps^\pm (t,x,v)$ around the global Maxwellian converge to the solutions to the incompressible Navier-Stokes-Fourier-Maxwell (NSFM) equations. This approach automatically provides a classical solution to the NSFM equations. The first named author of this paper and Luo did this for noncutoff VMB with hard sphere collision kernel in \cite{Jiang-Luo-2022-Ann.PDE}. For the technically much harder general soft potentials cases, both noncutoff and cutoff collision kernels, it was treated in the very recent paper of the authors of the current paper in \cite{JLei-2023}.

\subsection{Notations} We gather here the notations we will use throughout this paper. We first define the following shorthand notation,
\begin{equation*}
  \langle \cdot \rangle = \sqrt{1 + |\cdot|^2} \,.
\end{equation*}
For convention, we index the usual $L^p$ space by the name of the concerned variable. So we have, for $p \in [1, + \infty]$,
\begin{equation*}
  L^p_{[0,T]} = L^p ([0,T]) \,, \ L^p_x = L^p (\T^3) \,, \ L^p_v = L^p (\R^3) \,, \ L^p_{x,v} = L^p (\mathbb{R}^3 \times \mathbb{R}^3) \,.
\end{equation*}
For $p = 2$, we use the notations $ \langle \cdot \,, \cdot \rangle_{L^2_x} $, $ \langle \cdot \,, \cdot \rangle_{L^2_v} $ and $ \langle \cdot \,, \cdot \rangle_{L^2_{x,v}} $ to represent the inner product on the Hilbert spaces $L^2_x$, $L^2_v$ and $L^2_{x,v}$, respectively.

For any multi-indexes $\alpha = (\alpha_1, \alpha_2, \alpha_3)$ and $ \beta = ( \beta_1, \beta_2, \beta_3 )$ in $\mathbb{N}^3$ we denote the $(\alpha,\beta)^{th}$ partial derivative by
\begin{equation*}
  \partial^\alpha = \partial^\alpha_x \partial^\beta_v = \partial^{\alpha_1}_{x_1} \partial^{\alpha_2}_{x_2} \partial^{\alpha_3}_{x_3} \partial^{\beta_1}_{v_1} \partial^{\beta_2}_{v_2} \partial^{\beta_3}_{v_3} \,.
\end{equation*}

As in \cite{Guo-Wang-CPDE-2012}, we introduce the operator $\Lambda^s$ with $s\in\mathbb{R}$ by
\[
\left(\Lambda^sg\right)(t,x,v)=\int_{\mathbb{R}^3}|\xi|^{s}\hat{g}(t,\xi,v)e^{2\pi ix\cdot\xi}d\xi
=\int_{\mathbb{R}^3}|\xi|^{s}\mathcal{F}[g](t,\xi,v)e^{2\pi ix\cdot\xi}d\xi
\]
with $\hat{g}(t,\xi,v)\equiv\mathcal{F}[g](t,\xi,v)$ being the Fourier transform of $g(t,x,v)$ with respect to $x$.  The homogeneous Sobolev space $\dot{H}^s$ is the Banach space consisting of all $g$ satisfying  $\|g\|_{\dot{H}}<+\infty$, where
\[
\|g(t)\|_{\dot{H}^s}\equiv\left\|\left(\Lambda^s g\right)(t,x,v)\right\|_{L^2_{x,v}}=\left\||\xi|^s\hat{g}(t,\xi,v)\right\|_{L^2_{\xi,v}}.
\]

For later use, we define
\begin{equation}\label{norm}
	\begin{split}
		|f|_{\sigma,w}
		\sim&\left|\langle v\rangle^{-\frac{1}{2}}f\right|_{2,w}
		+\left|\langle v\rangle^{-\frac{3}{2}}\nabla_{ v}f\cdot\frac{ v}{| v|}\right|_{2,w}
		+\left|\langle v\rangle^{-\frac{1}{2}}\nabla_{ v}f\times\frac{ v}{| v|}\right|_{2,w}
	\end{split}
\end{equation}
with
$$
|f|_{2,w}=|wf|_2,\quad \|f\|_{\sigma,w}=\left\||f|_{\sigma,w}\right\|,\quad |f|_{\sigma}=|f|_{\sigma,0},\quad \|f\|_{\sigma}=\left\||f|_{\sigma}\right\|.
$$

\subsection{Main results.}
To state for brevity, we introduce the following energy and dissipation rate functional respectively:
\begin{eqnarray}
    \mathcal{E}_N(t) &= & \|f_\eps \|^2_{H^N_{x}} + \| E_\eps \|^2_{H^N_x} + \| B_\eps \|^2_{H^N_x}, \label{Energy-E-D-1}\\
    \mathcal{D}_N(t) &= & \tfrac{1}{\eps^2} \| \{{\bf I-P}\}f_\eps\|^2_{H^N_{x}L^2_\sigma} + \| \nabla_x {\bf P} f_\eps \|^2_{H^{N-1}_x L^2_v} + \| E_\eps \|^2_{H^{N-1}_x} + \| \nabla_x B_\eps \|^2_{H^{N-2}_x}. \label{Energy-E-D-2}
\end{eqnarray}

Due to the weaker dissipation of the Landau operator in the soft potential case rather than the hard cases, in order to deal with the external force term brought by the Lorenian electric-magnetic force, especially including the growth of velocity $v$, we need to introduce the time-velocity weight
\begin{equation}\label{weight}
  w_\ell(\alpha)=e^{\frac {q\langle v\rangle^2}{(1+t)^\vartheta}}\langle v\rangle^{3(\ell-|\alpha|)},\ q\ll 1,\ \ell\geq N
\end{equation}
where the indexes $\alpha$ is related to space $x-$derivatives.
\begin{remark}
For this time-velocity weight $w_\ell(\alpha)$, we need to point that
\begin{itemize}
\item
The algebraic component $\langle v\rangle^{3(\ell-|\alpha|)}$, which depend on spatial derivatives, play an essential role in dealing with
  the magnetic-field term $\frac1\varepsilon v\times B\cdotp \nabla_v f$;
  \item
  The exponential component $e^{\frac {q\langle v\rangle^2}{(1+t)^\vartheta}}$ will generate additional dissipation terms in the energy estimation.
\end{itemize}
\end{remark}
The energy and dissipation rate functional with respect to $w_\ell(\alpha)$ are introduced respectively by
\begin{eqnarray}
\mathcal{E}_{N-1,\ell}(t)
&=&\sum_{|\alpha|\leq N-1}\left\|w_\ell(\alpha)\partial^\alpha \{{\bf I-P}\}f_\eps\right\|^2,\label{E_l-k-1}\label{E-N-1-w}\\
\mathcal{D}_{N-1,\ell}(t)
&=&
\tfrac{1}{\eps^2} \sum_{|\alpha|\leq N-1}\|w_\ell(\alpha)\partial^\alpha \{{\bf I-P}\}f_\eps\|^2_{\sigma}\nonumber\\
&&+ \sum_{|\alpha|\leq N-1}\frac{q\vartheta}{(1+t)^{1+\vartheta}}\|w_\ell(\alpha)\partial^\alpha \{{\bf I-P}\}f_\eps\langle v\rangle\|^2.\label{D-N-1-w}
\end{eqnarray}
To obtain the desired temporal time decays, we introduce the energy and dissipation rate functional with the lowest $k\mbox{-}$order space-derivative
\begin{eqnarray}
\mathcal{E}_{N_0}^{k}(t)&=&\sum_{|\alpha|=k}^{N_0}\left\|\partial^\alpha[f_\varepsilon,E_\varepsilon,B_\varepsilon]\right\|^2,
\label{E-k}\\
\mathcal{D}_{N_0}^{k}(t)&=&
\left\|\nabla^{k}[E_\varepsilon,\rho^\varepsilon_+-\rho^\varepsilon_-]\right\|^2+\sum_{k+1\leq|\alpha|\leq N_0-1}\left\|
\partial^\alpha[{\bf P}f_\varepsilon,E_\varepsilon,B_\varepsilon]\right\|^2\nonumber\\
&&+\sum_{|\alpha|=N_0}\left\|\partial^\alpha {\bf P}f_\varepsilon\right\|^2+\tfrac{1}{\eps^2}\sum_{k\leq |\alpha| \leq N_0}\left\|\partial^\alpha{\bf\{I-P\}}f_\varepsilon\right\|^2_{\sigma},\label{D-k}
\end{eqnarray}
respectively.

The main results of this paper are two theorems listed below. The first theorem is the global existence of the scaled two-species VML system \eqref{VML-F} with uniform energy estimate with respect to the Knudsen number $0 < \eps \leq 1$.

\begin{theorem}\label{Main-Thm-1}
Assume $0<q\ll 1$, $0<\vartheta<1$, $0<\varepsilon<1$, $N_0\geq 3$, $N=2N_0$ and $l\geq N+\frac12$, if the initial data
 \begin{eqnarray}\label{Def-Y_0}
  Y_{f_\varepsilon,E_\varepsilon,B_\varepsilon}(0)&\equiv&\sum_{|\alpha|\leq N}\left\|e^{\frac {q\langle v\rangle^2}{(1+t)^\vartheta}}\langle v\rangle^{3(l-|\alpha|)} \partial^\alpha f_{\varepsilon,0}\right\|\nonumber\\
  &&+\|[E_{\varepsilon,0},B_{\varepsilon,0}]\|_{H^N_x}+\|\Lambda^{-\frac12}[f_{\varepsilon,0},E_{\varepsilon,0},B_{\varepsilon,0}]\|
 \end{eqnarray}
is taken a sufficiently small positive constant, which is independent of $\varepsilon$, then the Cauchy problem admits a global solution, we can also deduce that there exist energy functionals $\mathcal{E}_{N}(t)$, $\mathcal{E}_{N-1,l}(t)$ and the corresponding energy dissipation functionals $\mathcal{D}_{N}(t)$, $\mathcal{D}_{N-1,l}(t)$ which satisfy \eqref{Energy-E-D-1}, \eqref{E-N-1-w}, \eqref{Energy-E-D-2} and \eqref{D-N-1-w} respectively such that
\begin{eqnarray}\label{Main-Thm-1-1}
&&\frac{d}{dt}\left\{(1+t)^{-\frac{1+\epsilon_0}2}\sum_{|\alpha|=N}{\varepsilon^2}\left\|w_l(\alpha)\partial^\alpha f_\varepsilon\right\|^2+\mathcal{E}_{N-1,l}(t)+\mathcal{E}_{N}(t)\right\}\nonumber\\
&&+\mathcal{D}_{N}(t)+\mathcal{D}_{N-1,l}(t)
\lesssim0
\end{eqnarray}
{holds for all $0\leq t\leq T$.}

Meanwhile, we also get the large time behavior in the following result:
\begin{eqnarray}
  \mathcal{E}_{N_0}^{k}(t)\lesssim Y^2_{f_\varepsilon,E_\varepsilon,B_\varepsilon}(0)(1+t)^{-k-\frac12}, \ k=0,1,\cdots, N_0-2,
\end{eqnarray}
where $\mathcal{E}_{N_0}^{k}(t)$ is defined in \eqref{E-k}.
\end{theorem}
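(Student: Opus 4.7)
The plan is to prove Theorem \ref{Main-Thm-1} by a nonlinear energy method combined with a bootstrap continuity argument, closing a Lyapunov-type inequality uniformly in $0<\eps<1$, and then upgrading to the polynomial decay of $\mathcal{E}_{N_0}^k(t)$ through a negative-Sobolev interpolation in the spirit of Guo-Wang \cite{Guo-Wang-CPDE-2012}. The starting point is the macro-micro decomposition \eqref{macro-micro}, which reduces the task to three intertwined layers: a pure $H^N_xL^2_v$ energy estimate yielding $\mathcal{E}_N$ and $\mathcal{D}_N$; a weighted microscopic estimate in the norm $w_\ell(\alpha)$ yielding $\mathcal{E}_{N-1,\ell}$ and $\mathcal{D}_{N-1,\ell}$; and macroscopic-plus-electromagnetic dissipation extracted from moment equations together with Maxwell cross-testing.

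First, for each $|\alpha|\leq N$, I apply $\partial^\alpha$ to \eqref{VML-F} and test against $\partial^\alpha f_\eps$, jointly with the Maxwell block tested against $(\partial^\alpha E_\eps,\partial^\alpha B_\eps)$. The coercivity $\langle\mathscr{L}f,f\rangle_{L^2_v}\gtrsim|\{\mathbf{I}-\mathbf{P}\}f|_\sigma^2$ (cf.\ \cite{Guo-CMP-02}) furnishes the $\eps^{-2}$ microscopic dissipation in $\mathcal{D}_N$, while the singular term $\eps^{-1}(E_\eps\cdot v)\sqrt{M}q_1$ cancels exactly against Amp\`ere's law upon summation with the Maxwell energy, so that the Knudsen singularity disappears at the level of the basic energy identity. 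The most delicate term here is the Lorentz force $\eps^{-1}(v\times B_\eps)\cdot\nabla_v f_\eps$: the antisymmetry $(v\times B_\eps)\cdot v=0$ kills the principal singular contribution, but Leibniz terms arising when $\partial^\alpha$ lands on $B_\eps$ produce a loss of one $v$-derivative with velocity growth, and these must be absorbed by the weighted estimate below.

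Second, for $|\alpha|\leq N-1$, I project \eqref{VML-F} onto $\{\mathbf{I}-\mathbf{P}\}$, apply $\partial^\alpha$, and test against $w_\ell(\alpha)^2\partial^\alpha\{\mathbf{I}-\mathbf{P}\}f_\eps$. The algebraic factor $\langle v\rangle^{3(\ell-|\alpha|)}$ is tuned so that the $v$-derivative loss from $v\times B_\eps\cdot\nabla_v$ is absorbed at each derivative order after integration by parts, the three-power drop per derivative providing precisely the slack needed to compensate the $\langle v\rangle^{-3}$-degenerate Coulomb dissipation of $\mathscr{L}$; differentiating the Gaussian $e^{q\langle v\rangle^2/(1+t)^\vartheta}$ in time produces the extra dissipation $q\vartheta(1+t)^{-1-\vartheta}\|w_\ell(\alpha)\partial^\alpha\{\mathbf{I}-\mathbf{P}\}f_\eps\langle v\rangle\|^2$ in \eqref{D-N-1-w}, which is the key device for swallowing top-order cross terms. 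Macroscopic dissipation of $\nabla_x\mathbf{P}f_\eps$ and $(E_\eps,B_\eps)$ is produced via an auxiliary interaction functional as in \cite{Jiang-Luo-2022-Ann.PDE,JLei-2023}: testing the local conservation laws for $(\rho_\pm,u,\theta)$ against suitably chosen potentials returns $\|\nabla_x\mathbf{P}f_\eps\|^2_{H^{N-1}_xL^2_v}$, Amp\`ere rewritten in terms of the current $j_\eps$ yields $\|E_\eps\|^2_{H^{N-1}_x}$, and cross-testing Amp\`ere with $\nabla_x\times B_\eps$ together with Faraday tested against $\nabla_x\times E_\eps$ delivers $\|\nabla_x B_\eps\|^2_{H^{N-2}_x}$. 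A continuity argument under $Y_{f_\eps,E_\eps,B_\eps}(0)\ll 1$ then closes \eqref{Main-Thm-1-1} uniformly in $\eps$.

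For the decay, I propagate a uniform-in-time $\dot H^{-1/2}$-bound on $(f_\eps,E_\eps,B_\eps)$ by redoing the energy estimate at negative regularity (the third term of \eqref{Def-Y_0} is designed precisely for this), and combine it with the interpolation $\|\partial^k g\|_{L^2}^{1+1/(k+1/2)}\lesssim\|\Lambda^{-1/2}g\|_{L^2}^{1/(k+1/2)}\|\partial^{k+1}g\|_{L^2}$ to turn \eqref{Main-Thm-1-1} into $\tfrac{d}{dt}\mathcal{E}_{N_0}^k+c(\mathcal{E}_{N_0}^k)^{1+1/(k+1/2)}\leq 0$, which integrates to the claimed rate $(1+t)^{-k-1/2}$. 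The main obstacle is the simultaneous reconciliation of three conflicting features: the singular $\eps^{-1}$ prefactor in the Lorentz term, the Coulomb-induced degeneracy of $\mathscr{L}$'s dissipation (which only controls $\langle v\rangle^{-3}$-weighted quantities), and the requirement that all constants remain uniform in $\eps$. The trade-off is encoded in the $|\alpha|$-dependent weight $w_\ell(\alpha)$, but verifying that $\mathcal{D}_N+\mathcal{D}_{N-1,\ell}$ dominates every error term, especially the top-order magnetic cross terms $\sum_{|\alpha|=N}\eps^{-1}\langle(v\times\partial^{\alpha_1}B_\eps)\cdot\nabla_v\partial^{\alpha-\alpha_1}f_\eps,\partial^\alpha f_\eps\rangle$, motivates the inclusion of the auxiliary term $(1+t)^{-(1+\epsilon_0)/2}\sum_{|\alpha|=N}\eps^2\|w_\ell(\alpha)\partial^\alpha f_\eps\|^2$ in \eqref{Main-Thm-1-1} and requires a precise tuning of the parameters $(q,\vartheta,\ell,N_0,N)$.
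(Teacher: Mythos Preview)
Your proposal correctly identifies the main ingredients of the paper's scheme — the unweighted $H^N$ estimate, the weighted microscopic estimate on $\{\mathbf{I}-\mathbf{P}\}f_\eps$ for $|\alpha|\leq N-1$, the macroscopic and electromagnetic dissipation via interaction functionals, the auxiliary $\eps^2$-weighted block at $|\alpha|=N$, and the negative-Sobolev interpolation for decay — and in that sense follows the same route. However, there is a structural misconception that, as written, would prevent the argument from closing. You present the closure of \eqref{Main-Thm-1-1} and the decay $\mathcal{E}_{N_0}^k(t)\lesssim(1+t)^{-k-1/2}$ as sequential steps: first close the energy inequality by continuity, then deduce decay. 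In the paper the dependence is circular and the decay is \emph{required in order to close} the weighted estimate. The mechanism you do not name is this: in the $w_\ell(\alpha)$-weighted estimate the term $q_0(E_\eps\cdot v)f_\eps$ produces a contribution controlled only by $\|E_\eps\|_{L^\infty_x}\|w_\ell(\alpha)\partial^\alpha f_\eps\langle v\rangle^{1/2}\|^2$, and the only absorber available is the Gaussian-induced extra dissipation $\tfrac{q\vartheta}{(1+t)^{1+\vartheta}}\|w_\ell(\alpha)\partial^\alpha f_\eps\langle v\rangle\|^2$. This works precisely when $(1+t)^{1+\vartheta}\|E_\eps\|_{L^\infty_x}$ stays small, i.e.\ when the temporal decay you propose to derive \emph{afterward} already holds. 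The paper therefore runs the bootstrap on a combined a~priori hypothesis that already contains pointwise-in-time decay of $E_\eps$ (Assumption~1) and boundedness of $\bar{\mathcal{E}}_{N_0,\ell}$ (Assumption~2); Sections~4--5 verify those assumptions \emph{inside} the bootstrap, not as consequences.

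A second, smaller point: your stated motivation for the auxiliary term $(1+t)^{-(1+\epsilon_0)/2}\sum_{|\alpha|=N}\eps^2\|w_\ell(\alpha)\partial^\alpha f_\eps\|^2$ — controlling top-order magnetic Leibniz terms — is not the paper's reason. The magnetic commutators are handled directly by the algebraic drop $\langle v\rangle^{3(\ell-|\alpha|)}$ and feed into $\mathcal{D}_{N-1,\ell}$. The actual driver is the weighted coercivity defect $\eps^{-2}(\mathscr{L}\partial^\alpha f_\eps,w_\ell^2\partial^\alpha f_\eps)\gtrsim \eps^{-2}\|w_\ell\partial^\alpha f_\eps\|_\sigma^2-\eps^{-2}\|\partial^\alpha\mathbf{P}f_\eps\|^2$: at $|\alpha|=N$ one cannot use the projected equation (transport would demand $N+1$ derivatives), so one must use the full equation and swallow the singular macroscopic remainder by multiplying by $\eps^2$. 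The time weight $(1+t)^{-(1+\epsilon_0)/2}$ is then what absorbs the residual linear source $\eps^2\|\partial^\alpha E_\eps\|\,\|M^\delta\partial^\alpha f_\eps\|$, via $\eta(1+t)^{-1-\epsilon_0}\|\nabla_x^N E_\eps\|^2$ against $\epsilon_0(1+t)^{-1-\epsilon_0}\mathcal{E}_N(t)$ in Proposition~\ref{prop1}.
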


The second is on the two-fluid incompressible NSFM limit with Ohm's law as $\eps \rightarrow 0$, taken from the solutions $(f_\eps , E_\eps , B_\eps)$ of system \eqref{VML-F} which are constructed in the first theorem.
\begin{theorem}\label{Main-Thm-2}

Take the assumption as in Theorem \ref{Main-Thm-1}. Assume that the initial data $( f_{\eps,0} , E_{\eps,0}, B_{\eps,0} )$ satisfy
	\begin{enumerate}
		\item $f_{\eps,0} \in H^N_xL^2_{v}$, $E_{\eps,0}$, $B_{\eps,0}\in H^N_x$;
		\item there exist scalar functions $\rho{(0,x)}$, $ \theta{(0,x)}$,  $n{(0,x)}\in H^N_x$ and vector-valued functions $u{(0,x)} $,  $E{(0,x)}$, $B{(0,x)} \in H^N_x$ such that
		\begin{equation}
		  \begin{array}{l}
			f_{\eps,0} \rightarrow f{(0,x,v)}\quad \textrm{strongly in} \ H^N_{x}L^2_v \,, \\
			E_{\eps,0} \rightarrow E{(0,x)} \quad \textrm{strongly in} \ H^N_x \,, \\
			B_{\eps,0} \rightarrow B{(0,x)} \quad \textrm{strongly in} \ H^N_x
		  \end{array}
		\end{equation}
		as $\eps \rightarrow 0$, where $f{(0,x,v)}$ is of the form
		\begin{equation}
		  \begin{aligned}
		    f(0,x,v)= & ( \rho(0,x) + \tfrac{1}{2} n(0,x) ) \tfrac{{q_1} + {q_2}}{2} \sqrt{M} + ( \rho(0,x) - \tfrac{1}{2} n(0,x) ) \tfrac{{q_2} - {q_1}}{2} \sqrt{M} \\
		    & + u(0,x) \cdot v {q_2} \sqrt{M} + \theta(0,x) ( \tfrac{|v|^2}{2} - \tfrac{3}{2} ) {q_2} \sqrt{M} \,.
		  \end{aligned}
		\end{equation}
	\end{enumerate}
  Let $(f_\eps, E_\eps, B_\eps)$ be the family of solutions to the scaled two-species VML \eqref{VML-F} constructed in Theorem \ref{Main-Thm-1}. Then, as $\eps \rightarrow 0$,
  \begin{equation}
    \begin{aligned}
      f_\eps \rightarrow \left\{( \rho + \tfrac{1}{2} n ) \tfrac{{q_1} + {q_2}}{2}  + ( \rho - \tfrac{1}{2} n ) \tfrac{{q_2} - {q_1}}{2}  + u \cdot v {q_2} \sqrt{M} + \theta ( \tfrac{|v|^2}{2} - \tfrac{3}{2} )\right\} \sqrt{M}
    \end{aligned}
  \end{equation}
  weakly-$\star$ in $t \geq 0$, strongly in $H^{N-1}_{x}L^2_v$ and weakly in $H^N_{x}L^2_v$, and
  \begin{equation}
    \begin{aligned}
      E_\eps \rightarrow E \quad \textrm{and } \quad B_\eps \rightarrow B
    \end{aligned}
  \end{equation}
  strongly in $C( \R^+; H^{N-1}_x )$, weakly-$\star$ in $t \geq 0$ and weakly in $H^N_x$. Here
  $$( u, \theta , n, E, B ) \in C(\R^+; H^{N-1}_x) \cap L^\infty (\R^+ ; H^N_x)$$
  is the solution to the incompressible NSFM \eqref{INSFM-Ohm} with Ohm's law, which has the initial data
  \begin{equation}
    \begin{aligned}
      u|_{t=0} = \mathcal{P} u(0,x) \,, \ \theta |_{t=0} = \tfrac{3}{5} \theta(0,x) - \tfrac{2}{5} \rho(0,x) \,, \ E|_{t=0} = E(0,x)\,, \ B|_{t=0} = B(0,x) \,,
    \end{aligned}
  \end{equation}
  where $\mathcal{P}$ is the Leray projection. Moreover, the convergence of the moments holds:
  \begin{equation}
    \begin{aligned}
      & \mathcal{P} \langle f_\eps , \tfrac{1}{2} {q_2} v \sqrt{M} \rangle_{L^2_v} \rightarrow u \,, \\
      & \langle f_\eps , \tfrac{1}{2} {q_2} ( \tfrac{|v|^2}{5} - 1 ) \sqrt{M} \rangle_{L^2_v} \rightarrow \theta \,,
    \end{aligned}
  \end{equation}
  strongly in $C(\R^+ ; H^{N-1}_x)$, weakly-$\star$ in $t \geq 0$ and weakly in $H^N_x$ as $\eps \rightarrow 0$.
\end{theorem}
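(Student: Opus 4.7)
The plan is to use the uniform-in-$\eps$ energy bounds from Theorem \ref{Main-Thm-1} to extract weakly convergent subsequences of $(f_\eps,E_\eps,B_\eps)$ and then identify the limit as a solution to \eqref{INSFM-Ohm}. The uniform bound $\sup_{t\geq 0}\mathcal{E}_N(t)+\int_0^\infty \mathcal{D}_N(t)\,dt \lesssim Y^2_{f_\eps,E_\eps,B_\eps}(0)$ yields, up to a subsequence, weak-$\star$ convergence of $f_\eps$ in $L^\infty_t H^N_x L^2_v$ and of $(E_\eps,B_\eps)$ in $L^\infty_t H^N_x$, and weak convergence in $H^N_x L^2_v$ and $H^N_x$ respectively. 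Crucially, the $\eps^{-2}$ prefactor in $\mathcal{D}_N$ forces $\{{\bf I-P}\}f_\eps\to 0$ strongly (at rate $\eps$) in $L^2_t H^N_x L^2_\sigma$, so the limit $f$ lies in the null space $\mathcal N$ of $\mathscr L$ and thus takes the claimed form with scalar fields $\rho,n,\theta$ and vector field $u$.

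To pass to the limit in nonlinear terms I need strong compactness in $x$ and $t$ for the macroscopic moments. Spatial regularity is supplied by $\mathcal{E}_N$; time regularity follows from reading off $\partial_t \mathbf{P} f_\eps$ from \eqref{VML-F} by testing against kernel directions, since the collision term vanishes on $\mathcal N$ and the $\eps^{-1}$ flux produces a gradient bounded in negative Sobolev norms uniformly in $\eps$. A standard Aubin-Lions argument then upgrades weak convergence of $(\rho_\eps^\pm,u_\eps,\theta_\eps)$ to strong convergence in $C(\R^+; H^{N-1}_x)$, and the same reasoning applied to the Maxwell block, combined with the decay estimate $\mathcal{E}^{k}_{N_0}(t)\lesssim (1+t)^{-k-1/2}$, yields the strong convergence of $E_\eps$ and $B_\eps$ claimed in the statement.

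I then derive the limiting equations by projecting \eqref{VML-F} onto the kernel directions $[1,0]\sqrt M$, $[0,1]\sqrt M$, $v_i[1,1]\sqrt M$, and $(|v|^2-3)[1,1]\sqrt M$. Summing the two mass equations and invoking the rescaled Gauss law $\div_x E_\eps=\int f_\eps\cdot q_1\sqrt M\,dv$ gives $\div_x u=0$ and $\div_x E=n$ in the limit; the singular $\eps^{-1}$ pressure-type gradients in the momentum and energy equations are reconciled only via the Boussinesq relation $\rho+\theta=0$, after which the $O(1)$ parts assemble into the Navier-Stokes, Fourier, and Maxwell blocks of \eqref{INSFM-Ohm}. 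The quadratic terms $u\cdot\nabla_x u$, $u\cdot\nabla_x\theta$, $nE$, and $j\times B$ are handled by pairing a strongly convergent factor with a weakly convergent one; the transport coefficients $\mu$ and $\kappa$ emerge from inverting $\mathscr L$ on the tensor moments $A(v)$ and $B(v)$, exactly as in \eqref{VHE-Coefficients}.

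The main obstacle is the derivation of Ohm's law, because $j_\eps=\eps^{-1}\int v f_\eps\cdot q_1\sqrt M\,dv$ is a priori only weakly convergent with a singular prefactor. My plan is to subtract the two species' momentum equations to obtain an evolution equation for $\rho_+^\eps-\rho_-^\eps$ whose flux is precisely $j_\eps$, and then use the Fredholm alternative for $\mathscr L$ on $\mathcal N^\perp$: the preimage $\widetilde\Phi$ of $v q_1\sqrt M$ under $\mathscr L$ produces, after pairing with $\{{\bf I-P}\}f_\eps$ and exploiting $\|\{{\bf I-P}\}f_\eps\|_\sigma=O(\eps)$, exactly the conductivity $\sigma=\tfrac23\int \Phi\cdot\widetilde\Phi\, M\,dv$, while the singular $\eps^{-1}\nabla_x(\rho_+^\eps-\rho_-^\eps)$ contribution from the pressure in the momentum difference yields the $-\tfrac12\nabla_x n$ term. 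Combined with the $O(1)$ Lorentz contribution $E+u\times B$ (passed to the limit by strong convergence of $u_\eps,E_\eps,B_\eps$) and the drift $nu$ coming from the convective flux of the charge density, this uniquely identifies $j$ and closes the system. The initial-data identification then follows by decomposing $f(0,x,v)$ against $\mathcal N$: the divergence-free part of $u(0,x)$ survives via the Leray projection $\mathcal P$, and the Boussinesq constraint produces $\theta|_{t=0}=\tfrac35\theta(0,x)-\tfrac25\rho(0,x)$, while uniqueness for \eqref{INSFM-Ohm} under this smallness regime promotes subsequential convergence to full convergence.
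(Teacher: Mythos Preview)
Your proposal is correct and follows essentially the same route as the paper's proof in Section~\ref{Sec:Limits}: extract weak-$\star$ limits from the uniform bound \eqref{Main-Thm-1-1}, use the $\eps^{-2}$ coercivity in $\mathcal{D}_N$ to force $\{\mathbf{I}-\mathbf{P}\}f_\eps\to 0$ at rate $O(\eps)$ in $L^2_t H^N_x L^2_\sigma$, obtain the local conservation laws \eqref{Local-Consvtn-Law} by moment-projection, and then apply Aubin--Lions--Simon for the strong compactness needed to pass to the limit in the nonlinear terms. The paper itself is quite terse here, delegating the detailed passage to the limit (including the derivation of Ohm's law and the identification of the initial data via the Leray projection and the Boussinesq combination $\tfrac{3}{5}\theta(0)-\tfrac{2}{5}\rho(0)$) to the reference \cite{Jiang-Luo-2022-Ann.PDE}; your outline effectively reconstructs the content of that reference.

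One small remark: the time-decay estimate $\mathcal{E}^k_{N_0}(t)\lesssim (1+t)^{-k-1/2}$ is not used in the paper's Section~\ref{Sec:Limits} for the strong convergence of $(E_\eps,B_\eps)$. The paper obtains that convergence purely from Aubin--Lions, using the uniform $L^\infty_t H^N_x$ bound on $(E_\eps,B_\eps)$ together with the uniform $L^\infty_t H^{N-1}_x$ bound on $\partial_t(E_\eps,B_\eps)$ read directly off the Maxwell equations in \eqref{Local-Consvtn-Law} (since $j_\eps=\eps^{-1}\langle f_\eps, q_1 v\sqrt{M}\rangle_{L^2_v}$ is $O(1)$ in $L^2_t H^N_x$ by the dissipation bound on $\{\mathbf{I}-\mathbf{P}\}f_\eps$). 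Your invocation of the decay is harmless but not needed; the decay estimates in the paper serve instead to close the a~priori assumptions (Assumptions~1 and~2) in the construction of Theorem~\ref{Main-Thm-1}.
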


\subsection{Outline of proofs}
The whole paper relies on the global-in-time energy estimate uniform in $0<\varepsilon\leq 1$
for small initial data, i.e. the estimate \eqref{Main-Thm-1-1}. This depends on the careful design of
the energy norm and energy dissipation functional, the control of the singular terms. The main difficulty, in deducing the uniform-in-$\varepsilon$ estimates, comes from the singularity terms, which can be stated as the following:
 \begin{itemize}
  \item [i)] A general coercivity estimates on $\mathscr{L}$ tell us that
\begin{eqnarray}\label{hard-1}
  &&\frac1{\varepsilon^2}\left(\mathscr{L}\partial^\alpha f,w^2_{l}(\alpha)\partial^\alpha f\right)\nonumber\\
  &\gtrsim&\frac1{\varepsilon^2}\|w_{l}(\alpha)\partial^\alpha f\|_\sigma^2-\frac1{\varepsilon^2}\|\partial^\alpha f\|_\sigma^2\nonumber\\
  &\gtrsim&\frac1{\varepsilon^2}\|w_{l}(\alpha)\partial^\alpha f\|_\sigma^2-\frac1{\varepsilon^2}\left\|\partial^\alpha {\bf P}f\right\|^2-\frac1{\varepsilon^2}\left\|\partial^\alpha \{{\bf I-P}\}f\right\|_\sigma^2,
\end{eqnarray}
where the singularity term $\frac1{\varepsilon^2}\left\|\partial^\alpha {\bf P}f\right\|^2$ can not be controlled any more.
\item [ii)]Since both the singular factor $\frac1\varepsilon$ and the weaker coercivity property for the Columb potential than the hard sphere model as in \cite{Jiang-Luo-2022-Ann.PDE}, how to deal with the singularity terms, i.e. $\frac1{\varepsilon}v\times B\cdot\nabla_v f$ and $\frac1{\varepsilon}v\cdot\nabla_x f$, especially for the corresponding energy estimates with weight?
\end{itemize}
To overcome the above difficulty induced by the singularity terms, the main strategies
and novelties can be summarized in the following parts.
      \begin{itemize}
        \item [1)]We design a time-velocity weight $w_\ell(\alpha)=e^{\frac {q\langle v\rangle^2}{(1+t)^\vartheta}}\langle v\rangle^{3(\ell-|\alpha|)}$ that only depends on the spatial-derivatives, which is different from \cite{DLYZ-KRM2013,Guo-2012-JAMS} and \cite{Duan-Yang-Zhao-MMMAS-2013,DLYZ-CMP2017}, in which the corresponding weight depends on  both spatial-derivatives and velocity-derivatives, or velocity-derivatives. In fact, with the help of the algebraic part $\langle v\rangle^{3(\ell-|\alpha|)}$ in $w(\alpha)$, one can deduce that
\begin{eqnarray*}
 &&{\frac1\varepsilon}\left|\left(\partial^\alpha[ v\times B\cdot\nabla_v {\bf \{I-P\}}f],w^2_\ell(\alpha)\partial^\alpha {\bf \{I-P\}}f\right)\right|\\
 &=&{\frac1\varepsilon}\left|\left([ v\times B\cdot\nabla_v \partial^\alpha{\bf \{I-P\}}f],w^2_\ell(\alpha)\partial^\alpha {\bf \{I-P\}}f\right)\right|\\
 &&+\sum_{1\leq|\alpha_1|\leq N}{\frac1\varepsilon}\left|\left([ v\times \partial^{\alpha_1}B\cdot\nabla_v \partial^{\alpha-\alpha_1}{\bf \{I-P\}}f],w^2_\ell(\alpha)\partial^\alpha {\bf \{I-P\}}f\right)\right|\\
 &=&\sum_{1\leq|\alpha_1|\leq N}{\frac1\varepsilon}\left|\left([ v\times \partial^{\alpha_1}B\cdot\nabla_v \partial^{\alpha-\alpha_1}{\bf \{I-P\}}f],w^2_\ell(\alpha)\partial^\alpha {\bf \{I-P\}}f\right)\right|\\
 &\lesssim&\sum_{1\leq|\alpha_1|\leq N}{\frac1\varepsilon}\int_{\mathbb{R}^3_x\times\mathbb{R}^3_v}|\partial^{\alpha_1}B|
 |w_\ell(\alpha-\alpha_1)\nabla_v \partial^{\alpha-\alpha_1}{\bf \{I-P\}}f\langle v\rangle^{\frac12-3|\alpha_1|}|\\
 &&\times |w_\ell(\alpha)\partial^\alpha {\bf \{I-P\}}f\langle v\rangle^{-\frac12}|dxdv\\
 &\lesssim&\mathcal{E}_N(t)\mathcal{D}_{N-1,\ell}(t)+\frac\eta{\varepsilon^2}\left\|w_\ell(\alpha)\partial^\alpha {\bf \{I-P\}}f\langle v\rangle^{-\frac12}\right\|^2.
\end{eqnarray*}

         \item [2)]As \cite{DLYZ-KRM2013,DLYZ-CMP2017,Duan-Yang-Zhao-MMMAS-2013}, an exponential part $e^{\frac {q\langle v\rangle^2}{(1+t)^\vartheta}}$ in $w_\ell(\alpha)$ can yield an extra dissipation term
\[\frac{q\vartheta}{(1+t)^{1+\vartheta}}\|w_\ell(\alpha)\partial^\alpha \{{\bf I-P}\}f\langle v\rangle\|^2,\]
which can help to deduce that, the nonlinear term with respective to the electric-field $E$ can be dominated by
\begin{eqnarray*}
 &&\left|\left(E\cdot v\partial^\alpha\{{\bf I-P}\}f,w^2_\ell(\alpha)\partial^\alpha \{{\bf I-P}\}f\right)\right|\\
&\lesssim&\|E\|_{L^\infty_x}\left\|w_\ell(\alpha)\partial^{\alpha}\{{\bf I-P}\}f\langle v\rangle^{1/2}\right\|
\left\|w_\ell(\alpha)\partial^\alpha \{{\bf I-P}\}f\langle v\rangle^{1/2}\right\|\\
&\lesssim&\left\{\frac{(1+t)^{1+\vartheta}}{q\vartheta}\|E\|_{L^\infty_x}\right\}\left\{\frac{q\vartheta}{(1+t)^{1+\vartheta}}\left\|w_\ell(\alpha)\partial^\alpha \{{\bf I-P}\}f\langle v\rangle^{1/2}\right\|^2\right\}
\end{eqnarray*}
as long as $\|E\|_{L^\infty}$ enjoys some sufficiently time-decay rates.
\item [3)]To this end, by applying the interpolation method with respective to negative Sobolev space as \cite{Guo-Wang-CPDE-2012,Lei-Zhao-JFA-2014} and sophisticated and careful estimation,  we obtain the temporal time-decay rates of electric-magnetic fields, which is independent of $\varepsilon$, whose detail proofs can be seen in Lemma \ref{lemma4.3}, Lemma \ref{Lemma4.4} and Proposition \ref{Lemma4.5}. However, to ensure that the time decay result holds, we need the boundedness of the weighted energy norms
    \[\max\left\{\mathcal{E}_{N}(t),\mathcal{\bar{E}}_{N_0,N}(t)\right\}\]
    where the energy functionals $\mathcal{E}_{N}(t)$ and $\mathcal{\bar{E}}_{N_0,N}(t)$ are defined in \eqref{Energy-E-D-1} and \eqref{def-E-N-bar}.
\item [4)]For this purpose, we have to take the energy estimates with the wight $w_\ell(\alpha)$, however, \eqref{hard-1} tells us that we have to deal with the singularity term $\frac1{\varepsilon^2}\left\|\partial^\alpha {\bf P}f\right\|^2$.
    \begin{itemize}
      \item To this end, we multiply $\varepsilon^2$ to the energy estimates with weight on the highest-order spatial derivatives, such that $\left\|\partial^\alpha {\bf P}f\right\|^2$ can be controlled by the macroscopic dissipation terms.
      \item However, if we multiply $\varepsilon^2$ to the energy estimates with weight on all spatial derivatives,
    it is impossible to obtain the boundedness of the weighted energy norms $\mathcal{\bar{E}}_{N_0,N}(t)$.
    \end{itemize}
\item [5)]Different from the previous traditional energy estimation steps as \cite{Lei-Zhao-JFA-2014}, except for  the energy estimation of the highest derivative, we must use the microscopic projection equation \eqref{I-P} to estimate them. A detail proof can be seen in Lemma \ref{lemma-micro}.
\item [6)] Finally, by combining the above strategies, we construct the a priori estimates \eqref{Def-a-priori} and close it. Based on the uniform in $\varepsilon$ estimates, we employ the moment method to rigorously justify the
hydrodynamic limit from the perturbed VML to the two fluid incompressible NSFM
system with Ohm's law as \cite{Jiang-Luo-2022-Ann.PDE}.
      \end{itemize}

\subsection{The structure}
In the next section, we will deduce the Lyapunov inequality for the energy functional $\mathcal{E}_{N}(t)$ without weight. In Section 3, Lyapunov inequality for the energy functional with weight will be derived. Section 4 and Section 5 are devoted to the temporal time decay rates and negative Sobolev estimates respectively. We will construct the a priori estimates and close it in Section 6. In Section 7, based on
the uniform global in time energy bound, we take the limit to derive the incompressible
NSFM system with Ohm's law. Some basic properties of the linear collision operator and
bilinear symmetric operator will be given  in Appendix.

\section{Lyapunov inequality for the energy functional $\mathcal{E}_{N}(t)$} For notational simplicity, we drop the lower index $\eps$, i.e. $f\equiv f_{\eps}$, $E\equiv E_{\eps}$ and $B\equiv B_{\eps}$, \eqref{VML-F} can be rewritten as the following system:
\begin{equation}\label{VML-drop-eps}
  \left\{
    \begin{array}{l}
      \partial_t f + \tfrac{1}{\eps} \big[ v \cdot  \nabla_x f + q_0 ( \eps E + v \times B ) \cdot \nabla_v f \big] + \tfrac{1}{\eps^2} \mathscr{L} f - \tfrac{1}{\eps}  (E \cdot v) \sqrt{{M}}  q_1 \\[2mm]
      \qquad \qquad = \tfrac{1}{2} q_1 (E \cdot v) f + \tfrac{1}{\eps} \mathscr{T} (f, f) \,,\\[2mm]
      \partial_t E - \nabla_x \times B = - \tfrac{1}{\eps} \int_{\R^3} f \cdot q_1 v \sqrt{{M}} \d v \,,\\[2mm]
      \partial_t B + \nabla_x \times E = 0 \,,\\[2mm]
      \div_x E = \int_{\R^3} f \cdot q_1 \sqrt{{M}} \d v \,, \ \div_x B = 0 \,.
    \end{array}
  \right.
\end{equation}

Without generality, we take $N=2N_0$ with $N_0\geq 3$ for brevity, since we do not attempt to obtain the optimal regularity index.
\begin{lemma}\label{lemma3.7}
For $|\alpha|\leq N$, if we take $l\geq N$, then
\begin{eqnarray}\label{lemma3.7-1}
&&\frac{d}{dt}\left\|\partial^\alpha [f,E,B]\right\|^2+\frac1{\varepsilon^2}\left\|\partial^\alpha\{{\bf I-P}\}f\right\|_\sigma^2\nonumber\\
&\lesssim &\| E\|^{2}_{L^{\infty}_x}\left\|\langle v\rangle^{\frac 32}\nabla^N_xf\right\|^2+\mathcal{E}_N(t)\left\{\mathcal{D}_{N}(t)+\mathcal{D}_{N-1,l}(t)\right\}
+\eta\mathcal{D}_{N}(t),
\end{eqnarray}
holds for all $0\leq t\leq T$.
\end{lemma}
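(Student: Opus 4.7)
The approach is a standard high-order energy identity. For each $|\alpha|\le N$, apply $\partial^\alpha$ to the kinetic equation in \eqref{VML-drop-eps} and pair with $\partial^\alpha f$ in $L^2_{x,v}$; simultaneously apply $\partial^\alpha$ to the Maxwell equations and pair with $\partial^\alpha E$, $\partial^\alpha B$ in $L^2_x$. The time-derivative contributions assemble into $\tfrac12\tfrac{d}{dt}\|\partial^\alpha[f,E,B]\|^2$, and the linearized Landau term yields, via the coercivity of $\mathscr{L}$ on $\mathcal{N}^\perp$ (cf.~\cite{Guo-CMP-02}), the dissipation $\tfrac{1}{\eps^2}\|\partial^\alpha\{{\bf I-P}\}f\|_\sigma^2$ displayed on the left of \eqref{lemma3.7-1}. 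Since no velocity weight is involved here, the extra microscopic--macroscopic remainder $\tfrac{1}{\eps^2}\|\partial^\alpha{\bf P}f\|^2$ that appears in the weighted computation of the introduction is not generated.

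The key step is then to extract the cancellations that eliminate the singular $\tfrac{1}{\eps}$ contributions: (i) $\tfrac{1}{\eps}\langle v\cdot\nabla_x\partial^\alpha f,\partial^\alpha f\rangle=0$ by $x$-integration by parts; (ii) the principal magnetic term $\tfrac{1}{\eps}\langle q_0(v\times B)\cdot\nabla_v\partial^\alpha f,\partial^\alpha f\rangle=0$ by $v$-integration by parts, since $\nabla_v\cdot(v\times B)=0$; (iii) the curl pair from Maxwell cancels between the $E$- and $B$-identities; and (iv) the macroscopic coupling $-\tfrac{1}{\eps}\langle\partial^\alpha[(E\cdot v)\sqrt{M}q_1],\partial^\alpha f\rangle$ exactly cancels, at leading order, the current source $-\tfrac{1}{\eps}\langle\partial^\alpha\!\int q_1 v\sqrt{M}\cdot f\,\d v,\partial^\alpha E\rangle$ from the $E$-equation, leaving only commutators in which at least one derivative has fallen on $E$ or $B$.

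The surviving terms are matched to the right-hand side of \eqref{lemma3.7-1}. The magnetic commutator $\sum_{1\le|\alpha_1|\le|\alpha|}\binom{\alpha}{\alpha_1}\tfrac{1}{\eps}\langle v\times\partial^{\alpha_1}B\cdot\nabla_v\partial^{\alpha-\alpha_1}f,\partial^\alpha f\rangle$ is treated exactly as in the model estimate displayed in the outline of proofs: Sobolev embedding on $\partial^{\alpha_1}B$, pairing $\nabla_v\partial^{\alpha-\alpha_1}f$ against the algebraic factor $\langle v\rangle^{3(l-|\alpha-\alpha_1|)}$ of $w_l(\alpha-\alpha_1)$, and Cauchy--Schwarz with $\langle v\rangle^{-1/2}$ on the last slot, yields $\mathcal{E}_N(t)\mathcal{D}_{N-1,l}(t)+\eta\mathcal{D}_N(t)$. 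The non-singular electric force $q_0 E\cdot\nabla_v\partial^\alpha f$, the subleading coupling commutators, and the trilinear term $\tfrac{1}{\eps}\mathscr{T}(f,f)$ (whose singular prefactor is absorbed by a factor $\tfrac{1}{\eps}\{{\bf I-P}\}f$ already sitting inside $\mathcal{D}_N^{1/2}$ via the Appendix estimates) are all controlled by $\mathcal{E}_N(t)\mathcal{D}_N(t)$. Finally, the semilinear term $\tfrac12 q_0(E\cdot v)\partial^\alpha f$ is the delicate one: splitting $f={\bf P}f+\{{\bf I-P}\}f$, the $\{{\bf I-P}\}$-piece is absorbed into $\eta\mathcal{D}_N(t)$ through the coercive $\sigma$-norm; but at $|\alpha|=N$ the ${\bf P}$-piece carries the growth factor $v$ and lacks any top-order dissipation in $\mathcal{D}_N$, forcing Cauchy--Schwarz as $\|E\|_{L^\infty_x}\cdot\|\langle v\rangle^{3/2}\nabla^N_x f\|$ and producing precisely the first term of \eqref{lemma3.7-1}. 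This top-order semilinear piece is the main obstacle: it is what forbids a Lyapunov closure for $\mathcal{E}_N$ alone and forces both dissipations $\mathcal{D}_N$ and $\mathcal{D}_{N-1,l}$ to appear in tandem on the right-hand side.
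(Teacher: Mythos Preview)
Your overall strategy matches the paper's: energy identity, coercivity of $\mathscr{L}$, cancellation of the singular transport and Maxwell cross-terms, and then absorbing the Lorentz and collision commutators into $\mathcal{E}_N\{\mathcal{D}_N+\mathcal{D}_{N-1,l}\}+\eta\mathcal{D}_N$, with the $E\cdot v$ semilinear term singled out at $|\alpha|=N$. Two points need correction, however.

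First, your diagnosis of the $E\cdot v$ term is backwards. You write that the ${\bf P}$-piece ``carries the growth factor $v$ and lacks any top-order dissipation in $\mathcal{D}_N$''. But ${\bf P}f$ is a finite combination of $\sqrt{M}$, $v\sqrt{M}$, $|v|^2\sqrt{M}$, all exponentially decaying in $v$, so $\|\langle v\rangle^{3/2}\nabla^N_x{\bf P}f\|\lesssim\|\nabla^N_x{\bf P}f\|$, and $\|\nabla^N_x{\bf P}f\|^2$ \emph{is} contained in $\mathcal{D}_N$ (see \eqref{Energy-E-D-2}). The piece that cannot be absorbed is $\|\langle v\rangle^{3/2}\nabla^N_x\{{\bf I-P}\}f\|^2$: the $\sigma$-norm in $\mathcal{D}_N$ carries only $\langle v\rangle^{-1/2}$, and $\mathcal{D}_{N-1,l}$ stops at order $N-1$. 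This microscopic top-order leftover is exactly why $\|E\|_{L^\infty_x}^2\|\langle v\rangle^{3/2}\nabla^N_x f\|^2$ survives and must later be handled via Lemma~\ref{lemma3.8}.

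Second, your magnetic-commutator estimate has a gap. Integration by parts in $v$ kills only the $\alpha_1=0$ piece; for $1\le|\alpha_1|\le|\alpha|$ you invoke the model estimate from the outline of proofs, but that computation is written with $\{{\bf I-P}\}f$ on both slots. Applied to the full $f$, the macro--macro piece $\tfrac{1}{\eps}\big(q_0(v\times\partial^{\alpha_1}B)\cdot\nabla_v\partial^{\alpha-\alpha_1}{\bf P}f,\partial^\alpha{\bf P}f\big)$ carries a naked $\tfrac{1}{\eps}$ with no microscopic factor to absorb it into $\mathcal{D}_N$. The paper shows this term in fact \emph{vanishes} by a structural argument: the only non-radial component of ${\bf P}f$ is $u\cdot v[1,1]\sqrt{M}$, and since $q_0[1,1]=[1,-1]$ is orthogonal to the $[1,1]$-directions of ${\bf P}f$ while the $\rho_\pm$-pairings vanish by oddness in $v$, the whole expression is zero. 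The cross pieces are then handled by placing the $\tfrac{1}{\eps}$ on the microscopic factor, and only the micro--micro commutator actually needs the weighted $\mathcal{D}_{N-1,l}$. Without the PP-vanishing step your sketch does not close.
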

\begin{proof}
First of all, it is straightforward to establish the energy identities
\begin{eqnarray}\label{spatial-without}
&&\frac12\frac{d}{dt}\left(\left\|\partial^\alpha f\right\|^2+ \left\|\partial^\alpha[E,B])\right\|^2\right)+\frac1{\varepsilon^2}\left(\mathscr{L}\partial^\alpha f,\partial^\alpha f\right)\nonumber\\
&=&\left(\partial^\alpha\left(\frac{q_0}{2}E\cdot vf\right),\partial^\alpha f\right)-\left(\partial^\alpha\left(q_0 E\cdot\nabla_vf\right),\partial^\alpha f\right)\\
&&-\frac1\varepsilon\left(\partial^\alpha\left(q_0(v\times B)\cdot\nabla_vf\right),\partial^\alpha f\right)+\frac1\varepsilon\left(\partial^\alpha\mathscr{T}(f,f),\partial^\alpha f\right).\nonumber
\end{eqnarray}
The coercivity property of the linear operator $\mathscr{L}$ tells us that
\[
\frac1{\varepsilon^2}\left(\mathscr{L}\partial^\alpha f,\partial^\alpha f\right)\gtrsim\frac1{\varepsilon^2}\|\partial^\alpha{\bf\{I-P\}}f\|_\sigma^2.
\]
For the four terms on the right-hand side of \eqref{spatial-without}, we estimate them one by one in the following.

{\bf Case 1: $\alpha=0$.}
By applying macro-micro decomposition, $L^2-L^3-L^6$ or $L^2-L^\infty-L^2$ Sobolev inequalities and Cauchy inequalities, one has
\begin{eqnarray}
 &&\left(\frac{q_0}{2}E\cdot vf, f\right)\nonumber\\
 &=& \left(\frac{q_0}{2}E\cdot v{\bf P}f, {\bf P}f\right)+\left(\frac{q_0}{2}E\cdot v{\bf P}f, {\bf \{I-P\}}f\right)\nonumber\\
 &&+\left(\frac{q_0}{2}E\cdot v{\bf \{I-P\}}f, {\bf P}f\right)+\left(\frac{q_0}{2}E\cdot v{\bf \{I-P\}}f, {\bf \{I-P\}}f\right)\\
 &\lesssim&\|f\|_{H^1_xL^2_v}\|E\|\|\nabla_xf\|_\sigma+\|E\|_{L^\infty_x}^2\|{\bf \{I-P\}}f\langle v\rangle^{\frac32}\|^2+\eta\|{\bf \{I-P\}}f\|^2_\sigma\nonumber\\
&\lesssim&\|f\|^2_{H^1_xL^2_v}\|E\|^2+\eta\|\nabla_xf\|^2_\sigma+\|E\|_{L^\infty_x}^2\|{\bf \{I-P\}}f\langle v\rangle^{\frac32}\|^2+\eta\|{\bf \{I-P\}}f\|^2_\sigma\nonumber\\
&\lesssim&\mathcal{E}_2(t)\|E\|^2+\eta\|\nabla_xf\|^2_\sigma+\|E\|_{L^\infty_x}^2\mathcal{D}_{1,1}(t)+\eta\|{\bf \{I-P\}}f\|^2_\sigma\nonumber\\
&\lesssim&\mathcal{E}_2(t)\|E\|^2+\|E\|_{L^\infty_x}^2\mathcal{D}_{1,1}(t)+\eta\mathcal{D}_{2}(t).\nonumber
\end{eqnarray}
Integrating in part with respect to $v$ yields that
\begin{eqnarray}
  -\left(\left(q_0 E\cdot\nabla_vf\right), f\right)-\frac1\varepsilon\left(\left(q_0(v\times B)\cdot\nabla_vf\right), f\right)=0.
\end{eqnarray}
By using Lemma \ref{Lemma2.1}, we can deduce that the last term can be dominated by
$$
 \mathcal{E}_2(t)\mathcal{D}_{2}(t)+\frac{\eta}{{\varepsilon^2}}\|{\{\bf I- P\}}f\|^2_\sigma.
$$
By collecting the above related estimates, we arrive at
\begin{eqnarray}\label{0-order}
&&\frac12\frac{d}{dt}\left(\left\|f\right\|^2+ \left\|[E,B])\right\|^2\right)+\frac1{\varepsilon^2}\|{\bf \{I-P\}}f\|^2_\sigma\nonumber\\
&\lesssim&\mathcal{E}_2(t)\mathcal{D}_{2}(t)+\mathcal{E}_2(t)\|E\|^2+\|E\|_{L^\infty_x}^2\mathcal{D}_{1,1}(t)+\eta\mathcal{D}_2(t).
\end{eqnarray}

{\bf Case 2: $1\leq |\alpha|\leq N$.} By using Sobolev inequalities, one can deduce that the first term can be bounded by
\begin{equation}\label{alpha-without-w}
\begin{aligned}
\lesssim&\|E\|_{L^\infty_x} \left\|\langle v\rangle^{\frac 32}\partial^{\alpha}f\right\|
\left\|\langle v\rangle^{-\frac {1}2}\partial^\alpha f\right\|\\[2mm]
&+\chi_{|\alpha|\geq3}\sum_{1\leq|\alpha_1|\leq |\alpha|-2}\left\|\partial^{\alpha_1}E\right\|_{L^\infty_x}
\left\|\langle v\rangle^{\frac 32}\partial^{\alpha-\alpha_1}f\right\|
\left\|\langle v\rangle^{-\frac {1}2}\partial^\alpha f\right\|\\[2mm]
&+\chi_{|\alpha|\geq2}\sum_{|\alpha_1|=|\alpha|-1}\int_{\mathbb{R}^3_x\times\mathbb{R}^3_v}|\partial^{\alpha_1}E|
|\langle v\rangle^{\frac 32}\partial^{\alpha-\alpha_1}f|
|\langle v\rangle^{-\frac {1}2}\partial^\alpha f|dvdx\\[2mm]
&+\chi_{|\alpha|\geq1}\int_{\mathbb{R}^3_x\times\mathbb{R}^3_v}|\partial^{\alpha}E|
|\langle v\rangle^{\frac32}f|
|\langle v\rangle^{-\frac {1}2}\partial^\alpha f|dvdx.
\end{aligned}
\end{equation}
By Cauchy inequality, one deduce that the first two terms on the right-hand side of \eqref{alpha-without-w} can be bounded  by
\begin{equation}
\begin{aligned}
\lesssim&\|E\|^2_{L^\infty_x} \left\|\langle v\rangle^{\frac 32}\partial^{\alpha}f\right\|^2+\chi_{|\alpha|\geq3}\sum_{1\leq|\alpha_1|\leq |\alpha|-2}\left\|\partial^{\alpha_1}E\right\|^2_{L^\infty_x}
\left\|\langle v\rangle^{\frac 32}\partial^{\alpha-\alpha_1}f\right\|^2
+\eta\left\|\langle v\rangle^{-\frac {1}2}\partial^\alpha f\right\|^2\\[2mm]
\lesssim&\| E\|^{2}_{L^{\infty}_x}\left\|\langle v\rangle^{\frac 32}\nabla^N_xf\right\|^2+\mathcal{E}_N(t)\left\{\mathcal{D}_{N}(t)+\mathcal{D}_{N-1,l}(t)\right\}+\eta\left\|\partial^\alpha f\right\|_\sigma^2,
\end{aligned}
\end{equation}
where we ask that $l\geq N+\frac23$.

By Cauchy inequality and macro-micro decomposition, one has that the last two terms can be controlled by
\begin{eqnarray}
&&\chi_{|\alpha|\geq2}\sum_{|\alpha_1|=|\alpha|-1}\int_{\mathbb{R}^3_x\times\mathbb{R}^3_v}|\partial^{\alpha_1}E|
|\langle v\rangle^{\frac 32}\partial^{\alpha-\alpha_1}f|
|\langle v\rangle^{-\frac {1}2}\partial^\alpha f|dvdx\nonumber\\[2mm]
&&+\chi_{|\alpha|\geq1}\int_{\mathbb{R}^3_x\times\mathbb{R}^3_v}|\partial^{\alpha}E|
|\langle v\rangle^{\frac32}f|
|\langle v\rangle^{-\frac {1}2}\partial^\alpha f|dvdx\nonumber\\
&\lesssim&\chi_{|\alpha|\geq2}\sum_{|\alpha_1|=|\alpha|-1}\int_{\mathbb{R}^3_x\times\mathbb{R}^3_v}|\partial^{\alpha_1}E|
|\langle v\rangle^{\frac 32}\partial^{\alpha-\alpha_1}{\bf P}f|
|\langle v\rangle^{-\frac {1}2}\partial^\alpha f|dvdx\nonumber\\[2mm]
&&+\chi_{|\alpha|\geq1}\int_{\mathbb{R}^3_x\times\mathbb{R}^3_v}|\partial^{\alpha}E|
|\langle v\rangle^{\frac32}{\bf P}f|
|\langle v\rangle^{-\frac {1}2}\partial^\alpha f|dvdx\nonumber\\
&&+\chi_{|\alpha|\geq2}\sum_{|\alpha_1|=|\alpha|-1}\int_{\mathbb{R}^3_x\times\mathbb{R}^3_v}|\partial^{\alpha_1}E|
|\langle v\rangle^{\frac 32}\partial^{\alpha-\alpha_1}{\bf\{I-P\}}f|
|\langle v\rangle^{-\frac {1}2}\partial^\alpha f|dvdx\nonumber\\[2mm]
&&+\chi_{|\alpha|\geq1}\int_{\mathbb{R}^3_x\times\mathbb{R}^3_v}|\partial^{\alpha}E|
|\langle v\rangle^{\frac32}{\bf \{I-P\}}f|
|\langle v\rangle^{-\frac {1}2}\partial^\alpha f|dvdx\nonumber\\
&\lesssim&\mathcal{E}_N(t)\left\{\mathcal{D}_{N}(t)+\mathcal{D}_{N-1,l}(t)\right\}+\eta\left\|\partial^\alpha f\right\|_\sigma^2.
\end{eqnarray}
Now by collecting we arrive at
\begin{eqnarray}
  &&\left(\partial^\alpha\left(\frac{q_0}{2}E\cdot vf\right),\partial^\alpha f\right)\nonumber\\
  &\lesssim&\| E\|^{2}_{L^{\infty}_x}\left\|\langle v\rangle^{\frac 32}\nabla^N_xf\right\|^2+\mathcal{E}_N(t)\left\{\mathcal{D}_{N}(t)+\mathcal{D}_{N-1,l}(t)\right\}+\eta\left\|\partial^\alpha f\right\|_\sigma^2,
\end{eqnarray}
where we ask that  $l\geq N-\frac13$.
By a similar way, note that $\left(E\cdot \partial^\alpha\nabla_vf,\partial^\alpha f\right)=0,$ we can deduce that
\begin{eqnarray}
  &&\left(\partial^\alpha\left(q_0 E\cdot\nabla_vf\right),\partial^\alpha f\right)
  \lesssim\mathcal{E}_N(t)\left\{\mathcal{D}_{N}(t)+\mathcal{D}_{N-1,l}(t)\right\}+\eta\left\|\partial^\alpha f\right\|_\sigma^2.
\end{eqnarray}

As for the third term on the right-hand side of \eqref{spatial-without}, by using macro-micro decomposition, we have
\begin{eqnarray}
&&\frac1\varepsilon\left(\partial^\alpha\left(q_0(v\times B)\cdot\nabla_vf\right),\partial^\alpha f\right)\nonumber\\
&=&\frac1\varepsilon\left(\partial^\alpha\left(q_0(v\times B)\cdot\nabla_v{\bf P}f\right),\partial^\alpha {\bf P}f\right)\nonumber\\
&&+\frac1\varepsilon\left(\partial^\alpha\left(q_0(v\times B)\cdot\nabla_v{\bf P}f\right),\partial^\alpha {\{\bf I- P\}}f\right)\nonumber\\
&&+\frac1\varepsilon\left(\partial^\alpha\left(q_0(v\times B)\cdot\nabla_v{\{\bf I- P\}}f\right),\partial^\alpha {\bf P}f\right)\nonumber\\
&&+\frac1\varepsilon\left(\partial^\alpha\left(q_0(v\times B)\cdot\nabla_v{\{\bf I- P\}}f\right),\partial^\alpha {\{\bf I- P\}}f\right)\nonumber\\
&=&\frac1\varepsilon\left(\partial^\alpha\left(q_0(v\times B)\cdot\nabla_v{\bf P}f\right),\partial^\alpha {\{\bf I- P\}}f\right)\nonumber\\
&&+\frac1\varepsilon\left(\partial^\alpha\left(q_0(v\times B)\cdot\nabla_v{\{\bf I- P\}}f\right),\partial^\alpha {\bf P}f\right)\nonumber\\
&&+\frac1\varepsilon\left(\partial^\alpha\left(q_0(v\times B)\cdot\nabla_v{\{\bf I- P\}}f\right),\partial^\alpha {\{\bf I- P\}}f\right).\label{B-without-w}
\end{eqnarray}
where we use the fact that
$$\frac1\varepsilon\left(\partial^\alpha\left(q_0(v\times B)\cdot\nabla_v{\bf P}f\right),\partial^\alpha {\bf P}f\right)=0,$$
due to the kernel structure of ${\bf P}$ and the integral of oddness function with respect to velocity $v$ over $\mathbb{R}^3_v$.

Using various Sobolev inequalities and Cauchy inequality, we can get that the first two terms on the right-hand side of \eqref{B-without-w} can be controlled by
\begin{eqnarray}
&&\frac1\varepsilon\left(\partial^\alpha\left(q_0(v\times B)\cdot\nabla_v{\bf P}f\right),\partial^\alpha {\{\bf I- P\}}f\right)\nonumber\\
&&+\frac1\varepsilon\left(\partial^\alpha\left(q_0(v\times B)\cdot\nabla_v{\{\bf I- P\}}f\right),\partial^\alpha {\bf P}f\right)\nonumber\\
&\lesssim&\|B\|^2_{H^N_x}(t)\mathcal{D}_{N}(t)+\frac{\eta}{{\varepsilon^2}}\sum_{|\alpha'|\leq \alpha}\|\partial^{\alpha'}{\{\bf I- P\}}f\|^2_\sigma,
\end{eqnarray}
as for the last term on the right-hand side of \eqref{B-without-w}, by using a similar way as the estimates on $\left(\partial^\alpha\left(q_0 E\cdot\nabla_vf\right),\partial^\alpha f\right)$, we can deduce that
\begin{eqnarray*}
&&\frac1\varepsilon\left(\partial^\alpha\left(q_0(v\times B)\cdot\nabla_v{\{\bf I- P\}}f\right),\partial^\alpha {\{\bf I- P\}}f\right)\\
&=&\frac1\varepsilon\left(\left(q_0(v\times B)\cdot\nabla_v\partial^\alpha{\{\bf I- P\}}f\right),\partial^\alpha {\{\bf I- P\}}f\right)\\
&&+\sum_{1\leq|\alpha_1|\leq |\alpha|}\frac1\varepsilon\left(\left(q_0(v\times \partial ^{\alpha_1}B)\cdot\nabla_v\partial^{\alpha-\alpha_1}{\{\bf I- P\}}f\right),\partial^\alpha {\{\bf I- P\}}f\right)\\
&=&-\sum_{1\leq|\alpha_1|\leq |\alpha|}\frac1\varepsilon\left(q_0(v\times \partial ^{\alpha_1}B)\cdot\partial^{\alpha-\alpha_1}{\{\bf I- P\}}f,\partial^\alpha \nabla_v{\{\bf I- P\}}f\right)\\
&\lesssim&\sum_{1\leq|\alpha_1|\leq |\alpha|}\||\partial ^{\alpha_1}B||\partial^{\alpha-\alpha_1}{\{\bf I- P\}}f|\langle v\rangle^{\frac52}\|^2+\frac{\eta}{{\varepsilon^2}}\|\partial^\alpha \nabla_v{\{\bf I- P\}}f\langle v\rangle^{-\frac32}\|^2\\
&\lesssim&\left\|\nabla^2 B\right\|^2_{H^{ N-2}_x}\mathcal{D}_{N-1,l}(t)
+\frac{\eta}{{\varepsilon^2}}\|\partial^\alpha{\{\bf I- P\}}f\|^2_\sigma,
\end{eqnarray*}
where we take $l\geq N$.

Now we arrive at
\begin{eqnarray}
  &&\frac1\varepsilon\left(\partial^\alpha\left(q_0(v\times B)\cdot\nabla_vf\right),\partial^\alpha f\right)\nonumber\\
&\lesssim&\|B\|^2_{H^N_x}(t)\mathcal{D}_{N}(t)+\|B\|^2_{H^N_x}(t)\left\{\mathcal{D}_{N}(t)+\mathcal{D}_{N-1,l}(t)\right\}
+\frac{\eta}{{\varepsilon^2}}\sum_{|\alpha'|\leq \alpha}\|\partial^{\alpha'}{\{\bf I- P\}}f\|^2_\sigma,
\end{eqnarray}
provided that $l\geq N$.

By using Lemma \ref{Lemma2.1} and Cauchy inequality, one can get that the last term on the right-hand side of \eqref{B-without-w} can be dominated by
$$
\lesssim \mathcal{E}_{N}(t)\mathcal{D}_N(t)+\frac{\eta}{{\varepsilon^2}}\sum_{\alpha'\leq \alpha}\|\partial^{\alpha'}{\{\bf I- P\}}f\|^2_\sigma.
$$
Now we arrive at by collecting the above estimates
\begin{eqnarray}\label{E1}
&&\frac{d}{dt}\sum_{1\leq|\alpha|\leq N}\left(\left\|\partial^\alpha  f\right\|^2+\left\|\partial^\alpha[E,B]\right\|^2\right)
+\sum_{1\leq|\alpha|\leq N}\frac1{\varepsilon^2}\left\|\partial^\alpha\{{\bf I-P}\}f\right\|^2_\sigma\nonumber\\
&\lesssim&\| E\|^{2}_{L^{\infty}_x}\left\|\langle v\rangle^{\frac 32}\nabla^N_xf\right\|^2+\mathcal{E}_N(t)\left\{\mathcal{D}_{N}(t)+\mathcal{D}_{N-1,l}(t)\right\}
+\eta\mathcal{D}_{N}(t).
\end{eqnarray}
where we ask that  $l\geq N$.

By plugging \eqref{0-order} and \eqref{E1} into \eqref{spatial-without}, one has \eqref{lemma3.7-1}.
Thus the proof of Lemma \ref{lemma3.7} is complete.
\end{proof}

For brevity, we use $\mathcal{D}_{N,mac}(t)$ to denote that
\begin{equation}\label{def-mac}
\mathcal{D}_{N,mac}(t)\equiv\left\|\nabla_x[\rho_\pm,u,\theta]\right\|^2_{H^{N-1}_x}+\|\rho_+-\rho_-\|^2+\|E\|^2_{H^{N-1}_x}+\|\nabla_xB\|^2_{H^{N-2}_x}.
\end{equation}

The next lemma is concerned with the macro dissipation $\mathcal{D}_{N,mac}(t)$ in the following.

\begin{lemma}\label{mac-dissipation}
There exists an interactive energy functional $\mathcal{E}^{int}_N(t)$ satisfying
$$
\mathcal{E}^{int}_N(t)\lesssim\sum_{|\alpha|\leq N}\left\|\partial^\alpha[f,E,B]\right\|^2$$
such that
\begin{equation}\label{mac-dissipation-1}
\frac{d}{dt}\mathcal{E}^{int}_N(t)+\mathcal{D}_{N,mac}(t)\lesssim\frac1{\varepsilon^2}\sum_{|\alpha|\leq N}\left\|\partial^\alpha\{{\bf I-P}\}f\right\|_\sigma^2+\mathcal{E}_N(t)\mathcal{D}_N(t)
\end{equation}
holds for any $t\in[0,T]$.
\end{lemma}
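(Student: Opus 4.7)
The strategy is the standard Guo-style macroscopic construction: derive fluid-type equations for $(\rho_\pm,u,\theta)$ from the microscopic projection, then build interactive cross-functionals whose time derivatives produce each piece of $\mathcal{D}_{N,mac}(t)$, with all remainders absorbed into either the microscopic dissipation $\tfrac{1}{\eps^2}\sum\|\partial^\alpha\{{\bf I-P}\}f\|_\sigma^2$ or the trilinear term $\mathcal{E}_N\mathcal{D}_N$.

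First, I would test the perturbed equation \eqref{VML-drop-eps} against the basis $\{[1,0],[0,1],v_i[1,1],(|v|^2-3)[1,1]\}\sqrt{M}$ to obtain the local conservation laws for $(\rho_\pm,u,\theta)$: these take the schematic form $\partial_t\rho_\pm+\tfrac{1}{\eps}\nabla_x\!\cdot\!u=0$, $\partial_t u+\tfrac{1}{\eps}\nabla_x(\rho_++\rho_-+2\theta)/2\mp\tfrac{1}{2}(\rho_+\!-\!\rho_-)E+\tfrac{1}{\eps}\nabla_x\!\cdot\!\Theta(\{{\bf I-P}\}f)=\text{error}$, and similarly for $\theta$, where the stress/heat-flux tensors $\Theta,\Lambda$ acting on $\{{\bf I-P}\}f$ give moments bounded by the microscopic dissipation. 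Differentiating by $\partial^\alpha$ for $|\alpha|\le N-1$ and forming the classical cross pairings
\[
\sum_{|\alpha|\le N-1}\Big\{\skpt{\partial^\alpha u}{\nabla_x\partial^\alpha(\rho_++\rho_-+2\theta)}+\kappa_1\skpt{\partial^\alpha\theta}{\nabla_x\!\cdot\!\partial^\alpha u}+\kappa_2\skpt{\partial^\alpha(\rho_+\!-\!\rho_-)}{\nabla_x\!\cdot\!\partial^\alpha u}\Big\}
\]
and using the conservation laws to replace time derivatives, the $\tfrac{1}{\eps}$ factors cancel pairwise and (after choosing $\kappa_i$) produce $\|\nabla_x\partial^\alpha[\rho_\pm,u,\theta]\|^2$; the remainder has the $\tfrac{1}{\eps}\nabla_x\!\cdot\!\Theta(\{{\bf I-P}\}f)$ piece which, after integration by parts, is controlled by $\tfrac{1}{\eps^2}\|\partial^\alpha\{{\bf I-P}\}f\|_\sigma^2+\eta\|\nabla_x\partial^\alpha[u,\theta]\|^2$.

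Second, to capture the electromagnetic part of $\mathcal{D}_{N,mac}(t)$ I would exploit Maxwell's equations. For $\|\nabla_x(\rho_+\!-\!\rho_-)\|^2_{H^{N-1}}$ and $\|\nabla_xE\|^2_{H^{N-1}}$ the identity $\div_x E=\rho_+\!-\!\rho_-$ is immediate (it controls only divergence, but combined with the above conservation law for $\rho_+\!-\!\rho_-$ one recovers $\|\nabla_x(\rho_+\!-\!\rho_-)\|^2$). To pull in $\|E\|^2$ itself and $\|\rho_+\!-\!\rho_-\|^2$ I would add the cross-term $-\skpt{\partial^\alpha E}{\partial^\alpha(u^+-u^-)}/\eps$-style pairing: from $\partial_tE-\nabla_x\times B=-\tfrac{1}{\eps}\skpt{f,q_1v\sqrt M}$, testing against $E$ gives $\tfrac{d}{dt}\tfrac12\|E\|^2-\skpt{\nabla_x\times B}{E}=-\tfrac{1}{\eps}\skpt{j,E}$, so combining the macroscopic momentum equation (whose Lorentz term is $\tfrac12(\rho_+\!-\!\rho_-)E$) with this identity in a suitable interactive functional yields $\|E\|^2+\|\rho_+\!-\!\rho_-\|^2$ in the dissipation, modulo microscopic and trilinear errors. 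For $\|\nabla_xB\|^2_{H^{N-2}}$ I would use $-\skpt{\partial^\alpha\nabla_x\times B}{\partial^\alpha E}$ as an interactive functional ($|\alpha|\le N-2$): its time derivative produces $\|\nabla_x\times B\|^2=\|\nabla_xB\|^2$ (thanks to $\div_xB=0$) plus $\skpt{\partial_t E,\cdot}$ terms that, once expanded via the Ampere law, give controllable pieces.

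Third, every nonlinear term generated along the way is cubic in the energy: Lorentz forces $\tfrac12 q_1(E\cdot v)f$, magnetic transport $\tfrac{1}{\eps}(v\times B)\cdot\nabla_v f$, and the bilinear $\tfrac{1}{\eps}\mathscr{T}(f,f)$. Each is estimated by Sobolev embedding and Lemma \ref{Lemma2.1} precisely as in Lemma \ref{lemma3.7}, and all such contributions are absorbed into $\mathcal{E}_N(t)\mathcal{D}_N(t)$. Finally, I set $\mathcal{E}^{int}_N(t)$ to be a small constant multiple of the sum of all interactive cross-functionals above; each such pairing is bounded by $\sum_{|\alpha|\le N}\|\partial^\alpha[f,E,B]\|^2$ by Cauchy-Schwarz, giving the required upper bound.

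The main obstacle I anticipate is handling the $\tfrac{1}{\eps}$ singular factors uniformly: the macroscopic moment equations contain $\tfrac{1}{\eps}\nabla_x(\cdots)$ terms, so one must carefully arrange the interactive functional so that these singular pieces either cancel exactly between paired terms or land on $\{{\bf I-P}\}f$, where the factor of $\tfrac{1}{\eps}$ combines with a second factor of $\tfrac{1}{\eps}$ via Cauchy-Schwarz with coefficient $\eta$ to give $\tfrac{1}{\eps^2}\|\partial^\alpha\{{\bf I-P}\}f\|_\sigma^2$ on the right. Organizing this cancellation in the presence of both fluid (Maxwellian) and Lorentz couplings is the main bookkeeping challenge, but the structure is by now standard for two-species VMB/VML systems.
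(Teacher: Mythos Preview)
Your overall strategy is correct and matches the paper's: derive fluid-type moment equations, build cross (interactive) functionals whose time derivatives generate each piece of $\mathcal{D}_{N,mac}$, and absorb the remainders into $\tfrac{1}{\eps^2}\sum\|\partial^\alpha\{{\bf I-P}\}f\|_\sigma^2+\mathcal{E}_N\mathcal{D}_N$. You also correctly flag the $\tfrac{1}{\eps}$ factors as the central obstacle. However, your proposed resolution---``the $\tfrac{1}{\eps}$ factors cancel pairwise''---is not right as stated, and this is the genuine gap.

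With the macro--macro pairings you wrote (e.g.\ $\skpt{\partial^\alpha u}{\nabla_x\partial^\alpha(\rho_++\rho_-+2\theta)}$), the two time-derivative pieces produce terms like $-\tfrac{1}{\eps}\|\nabla_x(\rho+2\theta)\|^2$ and $+\tfrac{5}{3\eps}\|\nabla_x\!\cdot\!u\|^2$: both carry a $\tfrac{1}{\eps}$, they do not cancel, and chasing the Kawashima hierarchy at this scale forces the microscopic error terms (coming from $r_\pm=-\tfrac{1}{\eps}v\!\cdot\!\nabla_x\{{\bf I-P}\}f-\tfrac{1}{\eps^2}\mathscr{L}_\pm f$) up to order $\tfrac{1}{\eps^3}\|\{{\bf I-P}\}f\|_\sigma^2$, which is not bounded by the right-hand side of the lemma. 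The paper's device, which you are missing, is to insert an explicit factor of $\eps$ into every interactive functional: for instance $G^f_\theta=\sum_j\bigl(\tfrac{1}{2}\partial^\alpha\mathcal{B}_j(\{{\bf I-P}\}f\cdot[1,1]),\,\eps\,\partial^\alpha\partial_j\theta\bigr)$, $G^f_\rho=\sum_i\bigl(\partial^\alpha u_i,\,\eps\,\partial^\alpha\partial_i(\rho_++\rho_-)\bigr)$, and $\bigl(\partial^\alpha G,\,\eps\,\partial^\alpha E\bigr)$ with $G=\skpt{v\sqrt{M}}{\{{\bf I-P}\}f\cdot q_1}$. Since $\eps\le 1$, this costs nothing in the bound $|\mathcal{E}^{int}_N|\lesssim\sum\|\partial^\alpha[f,E,B]\|^2$, but it exactly neutralizes the $\tfrac{1}{\eps}$ in the moment equations so that the resulting dissipation is $O(1)$ and all microscopic remainders land at worst at $\tfrac{1}{\eps^2}\|\{{\bf I-P}\}f\|_\sigma^2$.

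A related point: rather than macro--macro pairings, the paper works through the \emph{higher} moment equations for $\mathcal{A}_{ij}(\{{\bf I-P}\}f)$ and $\mathcal{B}_j(\{{\bf I-P}\}f)$ (the ``13-moment'' route), so that one side of each pairing is always a microscopic moment. This makes the $\eps$-cancellation transparent: the micro-moment equation reads $\partial_t\mathcal{B}_j+\tfrac{1}{\eps}\partial_j\theta=\mathcal{B}_j(r+g)$, so $\|\partial^\alpha\nabla_x\theta\|^2=\bigl(\tfrac{1}{\eps}\partial^\alpha\partial_j\theta,\,\eps\,\partial^\alpha\partial_j\theta\bigr)$ directly. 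Your pure conservation-law pairings can in principle be made to work once the $\eps$ factor is inserted, but the 13-moment form is cleaner and is what the paper actually does. The electromagnetic part ($E$ via $(\partial^\alpha G,\eps\,\partial^\alpha E)$, $B$ via $(\partial^\alpha E,\partial^\alpha\nabla_x\times B)$ with no $\eps$ needed since the Faraday law is unscaled) follows the same pattern.
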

\begin{proof}
To this end, for any solution $f(t,x,v)$ of the VML system (\ref{VML-drop-eps}), by applying the macro-micro decomposition $(\ref{macro-micro})$ introduced in \cite{Guo-IUMJ-2004} and by defining moment functions $\mathcal{A}_{mj}(f)$ and $\mathcal{B}_j(f),~1\leq m,j\leq3,$ by
\begin{equation*}
\mathcal{A}_{mj}(f)=\int_{{\mathbb{R}}^3}\left( v_m v_j-1\right){M}^{1/2}fd v,\quad \mathcal{B}_j(f)=\frac{1}{10}\int_{{\mathbb{R}}^3}\left(| v|^2-5\right) v_j{M}^{1/2}fd v
\end{equation*}
as in \cite{DUAN-STAIN-ARMA-2011}, one can then derive from $(\ref{VML-drop-eps})$ a fluid-type system of equations
\begin{equation}\label{Macro-equation}
\begin{cases}
\partial_t\rho_\pm+\frac1\varepsilon\nabla_x\cdot u+\frac1\varepsilon\nabla_x\cdot
\left\langle v{M}^{1/2},\{{\bf I_\pm-P_\pm}\}f\right\rangle=\left\langle {M}^{1/2}, g_\pm\right\rangle,\\[2mm]
\partial_t\left(u_i+\left\langle v_i{M}^{1/2},\{{\bf I_\pm-P_\pm}\}f\right\rangle\right)+\frac1\varepsilon\partial_i\left(\rho_\pm+2\theta\right)\mp \frac1\varepsilon E_i\\[2mm]
\quad\quad+\frac1\varepsilon\nabla_x\cdot\left\langle vv_i{M}^{1/2}, \{{\bf I_\pm-P_\pm}\}f \right\rangle=\left\langle v_i{M}^{1/2}, g_\pm+\frac1{\varepsilon^2} L_\pm f\right\rangle,\\[2mm]
\partial_t\left(\theta+\frac16\left\langle (|v|^2-3){M}^{1/2},\{{\bf I_\pm-P_\pm}\}f\right\rangle\right)+\frac1{3\varepsilon}\nabla_x\cdot u\\[2mm]
\quad\quad+\frac1{6\varepsilon}\nabla_x\left\langle (|v|^2-3)v{M}^{1/2},\{{\bf I_\pm-P_\pm}\}f\right\rangle=\left\langle(|v|^2-3){M}^{1/2},g_\pm-\frac1{\varepsilon^2}L_\pm f \right\rangle,
\end{cases}
\end{equation}
and
\begin{equation}\label{Micro-equation}
\begin{cases}
\partial_t[\mathcal{A}_{ii}(\{{\bf I_\pm-P_\pm}\}f)+2\theta]+2\frac1\varepsilon\partial_iu_i
=\mathcal{A}_{ii}(r_\pm+g_\pm),\\[2mm]
\partial_t\mathcal{A}_{ij}(\{{\bf I_\pm-P_\pm}\}f)+\frac1\varepsilon\partial_ju_i+\frac1\varepsilon\partial_iu_j
+\frac1\varepsilon\nabla_x\cdot \langle v{M}^{1/2},\{{\bf I_\pm-P_\pm}\}f\rangle
=\mathcal{A}_{ij}(r_\pm+g_\pm),\\[2mm]
\partial_t \mathcal{B}_{j}(\{{\bf I_\pm-P_\pm}\}f)+\frac1\varepsilon\partial_j\theta=\mathcal{B}_j(r_\pm+g_\pm),
\end{cases}
\end{equation}
where
\begin{eqnarray}\label{r-G}
  r_\pm&=&- \frac1\varepsilon v\cdot\nabla_x\{{\bf I_\pm-P_\pm}\}f-\frac1{\varepsilon^2}{L_\pm}f,\nonumber\\
  \quad g_\pm&=&\frac12 v\cdot E f_\pm\mp (E+\frac1\varepsilon v\times B)\cdot\nabla_{ v}f_\pm+\frac1\varepsilon\mathscr{T}_\pm(f,f).
\end{eqnarray}
Setting
$$
G\equiv\left\langle v{M}^{1/2},\{{\bf I-P}\}f \cdot q_1 \right\rangle
$$
and noticing $\langle {M}^{1/2}, g_\pm\rangle=0$, we can get from (\ref{Macro-equation})-(\ref{Micro-equation}) that
\begin{equation}\label{Macro-equation1}
\begin{cases}
\partial_t\left(\frac{\rho_++\rho_-}2\right)+\frac1\varepsilon\nabla_x\cdot u=0\\[2mm]
\partial_tu_i+\frac1\varepsilon\partial_i\left(\frac{\rho_++\rho_-}2+2\theta\right)+\frac1{2\varepsilon}\sum\limits_{j=1}^3\partial_j\mathcal{A}_{ij}(\{{\bf I-P}\}f\cdot [1,1])=\frac{\rho_+-\rho_-}{2}E_i+\frac1\varepsilon[G\times B]_i,\\[2mm]
\partial_t\theta+\frac1{3\varepsilon}\nabla_x\cdot u+\frac5{6\varepsilon}\sum\limits_{i=1}^3\partial_i \mathcal{B}_i(\{{\bf I-P}\}f\cdot [1,1])=\frac16 G\cdot E,
\end{cases}
\end{equation}
and
\begin{equation}\label{Micro-equation1}
\begin{cases}
\partial_t[\frac12\mathcal{A}_{ij}(\{{\bf I-P}\}f\cdot [1,1])+2\theta\delta_{ij}]+\frac1\varepsilon\partial_ju_i+\frac1\varepsilon\partial_iu_j
=\frac12\mathcal{A}_{ij}(r_++r_-+g_++g_-),\\[2mm]
\frac12\partial_t \mathcal{B}_{j}(\{{\bf I-P}\}f\cdot [1,1])+\frac1\varepsilon\partial_j\theta=\frac12\mathcal{B}_{j}(r_++r_-+g_++g_-).
\end{cases}
\end{equation}
Moreover, by using the third equation of (\ref{Macro-equation1}) to replace $\partial_t\theta$ in the first equation of (\ref{Micro-equation1}), one has
\begin{equation}\label{Micro-equation2}
\begin{split}
&\frac12\partial_t\mathcal{A}_{ij}(\{{\bf I-P}\}f\cdot [1,1])+\frac1\varepsilon\partial_ju_i+\frac1\varepsilon\partial_iu_j-\frac2{3\varepsilon}\delta_{ij}\nabla_x\cdot u\\[2mm]
&-\frac5{3 \varepsilon}\delta_{ij}\nabla_x\cdot \mathcal{B}(\{{\bf I-P}\}f\cdot [1,1])=\frac12\mathcal{A}_{ij}(r_++r_-+g_++g_-)-\frac13\delta_{ij}G\cdot E.
\end{split}
\end{equation}
In order to further obtain the dissipation rate related to $\rho_\pm$ from the formula
\[
\rho_+^2+\rho_-^2=\frac{|\rho_++\rho_-|^2}{2}+\frac{|\rho_+-\rho_-|^2}{2},
\]
we need to consider the dissipation of $\rho_+-\rho_-$. For that purpose, one can get from $(\ref{Macro-equation1})_1$ and $(\ref{Macro-equation1})_2$ that
\begin{equation}\label{a_+-a_--original}
\begin{cases}
\partial_t(\rho_+-\rho_-)+\frac1\varepsilon\nabla_x\cdot G=0,\\[2mm]
\partial_tG+\frac1\varepsilon\nabla_x(\rho_+-\rho_-)-\frac2\varepsilon E+\frac1\varepsilon\nabla_x\cdot \mathcal{A}(\{{\bf I-P}\}f\cdot q_1)\\[2mm]
\qquad \qquad=E(\rho_++\rho_-)+\frac2\varepsilon u\times B+\left\langle [v,-v]{M}^{1/2},\frac1{\varepsilon^2}\mathscr{L}f+\frac1\varepsilon\mathscr{T}(f,f)\right\rangle.
\end{cases}
\end{equation}

Applying $\partial^\alpha$ to $(\ref{Micro-equation1})_2  $, and multiplying to the identity with $\varepsilon\partial^\alpha\partial_j \theta $, and integrating the identity result over $\mathbb{R}^3_x$, one has
\begin{eqnarray}
&&\left\|\partial^\alpha\nabla_x \theta\right\|^2=\sum_{j=1}^3\left(\frac1\varepsilon\partial^\alpha\partial_j\theta, \varepsilon\partial^\alpha\partial_j\theta\right)\nonumber\\
&\lesssim&-\sum_{j=1}^3\left(\frac12\partial_t \partial^\alpha \mathcal{B}_{j}(\{{\bf I-P}\}f\cdot [1,1]),\varepsilon\partial^\alpha\partial_j\theta\right)+\sum_{j=1}^3\left(\frac12\partial^\alpha \mathcal{B}_{j}(r_++r_-+g_++g_-),\varepsilon\partial^\alpha\partial_j\theta\right)\nonumber\\
&=&-\frac{d}{dt}\sum_{j=1}^3\left(\frac12 \partial^\alpha \mathcal{B}_{j}(\{{\bf I-P}\}f\cdot [1,1]),\varepsilon\partial^\alpha\partial_j\theta\right)+\sum_{j=1}^3\left(\frac12 \partial^\alpha\mathcal{ B}_{j}(\{{\bf I-P}\}f\cdot [1,1]),\varepsilon\partial^\alpha\partial_j\partial_t\theta\right)\nonumber\\
&&+\sum_{j=1}^3\left(\frac12\partial^\alpha \mathcal{B}_{j}(r_++r_-+g_++g_-),\varepsilon\partial^\alpha\partial_j\theta\right).\label{4.18}
\end{eqnarray}

For the second term on the right hand of the second equality in \eqref{4.18}, we have from $(\ref{Macro-equation1})_3$ that
\begin{equation*}
\begin{aligned}
&\sum_{j=1}^3\left(\frac12 \partial^\alpha \mathcal{B}_{j}(\{{\bf I-P}\}f\cdot [1,1]),\varepsilon\partial^\alpha\partial_j\partial_t\theta\right)\\
=&\bigg(\frac12 \partial^\alpha \mathcal{B}_{j}(\{{\bf I-P}\}f\cdot [1,1]),\varepsilon\partial^\alpha\partial_j\bigg\{-\frac1{3\varepsilon}\nabla_x\cdot u-\frac5{6\varepsilon}\sum\limits_{i=1}^3\partial_i \mathcal{B}_i(\{{\bf I-P}\}f\cdot [1,1])+\frac16 G\cdot E\bigg\}\bigg)\\
\lesssim&\eta\left\|\partial^\alpha\nabla_x u\right\|^2+\left\|\partial^{\alpha}\{{\bf I-P}\}f\right\|_{H^1_xL^2_\sigma}+\mathcal{E}_{N}(t)\mathcal{D}_{N}(t),
\end{aligned}
\end{equation*}
while for the last term on the right hand of (\ref{4.18}), we can deduce that
\begin{equation*}
\begin{aligned}
&\sum_{j=1}^3\left(\frac12\partial^\alpha \mathcal{B}_{j}(r_++r_-+g_++g_-),\varepsilon\partial^\alpha\partial_j\theta\right)\\[2mm]
\lesssim&\eta\left\|\partial^\alpha\nabla_x u\right\|^2+\left\|\partial^{\alpha}\{{\bf I-P}\}f\right\|_{H^1_xL^2_\sigma}+\mathcal{E}_{N}(t)\mathcal{D}_{N}(t).
\end{aligned}
\end{equation*}
Thus we can get that
\begin{equation}\label{G-c}
\begin{aligned}
\frac{d}{dt}G^{f}_\theta(t)+\left\|\partial^\alpha\nabla_x \theta\right\|^2\lesssim\eta\left\|\partial^\alpha\nabla_x u\right\|^2+\left\|\partial^{\alpha}\{{\bf I-P}\}f\right\|_{H^1_xL^2_\sigma}+\mathcal{E}_{N}(t)\mathcal{D}_{N}(t).
\end{aligned}
\end{equation}
Here
\[
G^{f}_\theta(t)\equiv\sum_{j=1}^3\left(\frac12 \partial^\alpha \mathcal{B}_{j}(\{{\bf I-P}\}f\cdot [1,1]),\varepsilon\partial^\alpha\partial_j\theta\right).
\]
On the other hand, by using \eqref{Micro-equation2}, one has
\begin{eqnarray}
  &&-\left\{\frac12\partial_i\partial_t\mathcal{A}_{ii}(\{{\bf I-P}\}f\cdot [1,1])+\sum_j\partial_j\partial_t\mathcal{A}_{ji}(\{{\bf I-P}\}f\cdot [1,1])\right\}\nonumber\\
  &&-\frac2\varepsilon\Delta u_i-\frac2\varepsilon\partial_i\partial_i u_i+\frac5{ \varepsilon}\partial_i\nabla_x\cdot \mathcal{B}(\{{\bf I-P}\}f\cdot [1,1])\nonumber\\
  &=&-\frac12\partial_i\mathcal{A}_{ii}(r_++r_-+g_++g_-)-\sum_j\partial_j\mathcal{A}_{ji}(r_++r_-+g_++g_-)+\partial_i\left[G\cdot E\right].
\end{eqnarray}
Applying $\partial^\alpha$ to the above equality, multiplying it with $\epsilon\partial^\alpha u_i$, and integrating the identity result over $\mathbb{R}^3_x$, then one has
\begin{eqnarray*}
&&2\|\partial^\alpha \nabla_xu\|^2+2\sum_i\|\partial^\alpha \partial_iu_i\|^2\nonumber\\
&=&\sum_i\left(-\frac2\varepsilon\partial^\alpha \Delta u_i, \varepsilon\partial^\alpha u_i\right)+\sum_i
\left(-\frac2\varepsilon\partial^\alpha\partial_i\partial_i u_i, \varepsilon\partial^\alpha u_i\right)\nonumber\\
&=&\sum_i\left(-\frac2\varepsilon\partial^\alpha \Delta u_i-\frac2\varepsilon\partial^\alpha\partial_i\partial_i u_i, \varepsilon\partial^\alpha u_i\right)\nonumber\\
&=&\sum_i\left(\frac12\partial^\alpha\partial_i\partial_t\mathcal{A}_{ii}(\{{\bf I-P}\}f\cdot [1,1])+\sum_j\partial^\alpha\partial_j\partial_t\mathcal{A}_{ji}(\{{\bf I-P}\}f\cdot [1,1]), \varepsilon\partial^\alpha u_i\right)\nonumber\\
&&+\sum_i\left(-\frac12\partial^\alpha\partial_i\mathcal{A}_{ii}(r_++r_-+g_++g_-)
-\sum_j\partial^\alpha\partial_j\mathcal{A}_{ji}(r_++r_-+g_++g_-), \varepsilon\partial^\alpha u_i\right)\nonumber\\
&&+\sum_i\left(\partial^\alpha\partial_i\left[G\cdot E\right]-\frac5{ \varepsilon}\partial^\alpha\partial_i\nabla_x\cdot \mathcal{B}(\{{\bf I-P}\}f\cdot [1,1]), \varepsilon\partial^\alpha u_i\right)\nonumber\\
&=&\frac{d}{dt}\sum_i\left(\frac12\partial^\alpha\partial_i\mathcal{A}_{ii}(\{{\bf I-P}\}f\cdot [1,1])+\sum_j\partial^\alpha\partial_j\mathcal{A}_{ji}(\{{\bf I-P}\}f\cdot [1,1]), \varepsilon\partial^\alpha u_i\right)\nonumber\\
&&-\sum_i\left(\frac12\partial^\alpha\partial_i\mathcal{A}_{ii}(\{{\bf I-P}\}f\cdot [1,1])+\sum_j\partial^\alpha\partial_j\mathcal{A}_{ji}(\{{\bf I-P}\}f\cdot [1,1]), \varepsilon\partial^\alpha \partial_tu_i\right)\nonumber\\
&&+\sum_i\left(-\frac12\partial^\alpha\partial_i\mathcal{A}_{ii}(r_++r_-+g_++g_-)
-\sum_j\partial^\alpha\partial_j\mathcal{A}_{ji}(r_++r_-+g_++g_-), \varepsilon\partial^\alpha u_i\right)\nonumber\\
&&+\sum_i\left(\partial^\alpha\partial_i\left[G\cdot E\right]-\frac5{ \varepsilon}\partial^\alpha\partial_i\nabla_x\cdot \mathcal{B}(\{{\bf I-P}\}f\cdot [1,1]), \varepsilon\partial^\alpha u_i\right)\nonumber\\
&=&\frac{d}{dt}\sum_i\left(\frac12\partial^\alpha\partial_i\mathcal{A}_{ii}(\{{\bf I-P}\}f\cdot [1,1])+\sum_j\partial^\alpha\partial_j\mathcal{A}_{ji}(\{{\bf I-P}\}f\cdot [1,1]), \varepsilon\partial^\alpha u_i\right)\nonumber\\
&&+\sum_i\left(\frac12\partial^\alpha\partial_i\mathcal{A}_{ii}(\{{\bf I-P}\}f\cdot [1,1])+\sum_j\partial^\alpha\partial_j\mathcal{A}_{ji}(\{{\bf I-P}\}f\cdot [1,1]), \partial^\alpha \left\{\partial_i\big(\frac{\rho_++\rho_-}2+2c\big)\right\}\right)\nonumber\\
&&-\sum_i\left(\frac12\partial^\alpha\partial_i\mathcal{A}_{ii}(\{{\bf I-P}\}f\cdot [1,1])+\sum_j\partial^\alpha\partial_j\mathcal{A}_{ji}(\{{\bf I-P}\}f\cdot [1,1]), \varepsilon\partial^\alpha \left\{\frac{\rho_+-\rho_-}{2}E_i+\frac1\varepsilon[G\times B]_i\right\}\right)\nonumber\\
&&+\sum_i\left(\frac12\partial^\alpha\partial_i\mathcal{A}_{ii}(\{{\bf I-P}\}f\cdot [1,1])+\sum_j\partial^\alpha\partial_j\mathcal{A}_{ji}(\{{\bf I-P}\}f\cdot [1,1]), \frac1{2}\partial^\alpha \partial_i\big(\sum\limits_{j=1}^3\partial_j\mathcal{A}_{ij}(\{{\bf I-P}\}f\cdot [1,1])\right)\nonumber\\
&&+\sum_i\left(-\frac12\partial^\alpha\partial_i\mathcal{A}_{ii}(r_++r_-+g_++g_-)
-\sum_j\partial^\alpha\partial_j\mathcal{A}_{ji}(r_++r_-+g_++g_-), \varepsilon\partial^\alpha u_i\right)\nonumber\\
&&+\sum_i\left(\partial^\alpha\partial_i\left[G\cdot E\right]-\frac5{ \varepsilon}\partial^\alpha\partial_i\nabla_x\cdot \mathcal{B}(\{{\bf I-P}\}f\cdot [1,1]), \varepsilon\partial^\alpha u_i\right)\nonumber
\end{eqnarray*}
The second term on the right-hand side of the above equality can be bounded by
\begin{eqnarray}
&&\sum_i\left(\frac12\partial^\alpha\partial_i\mathcal{A}_{ii}(\{{\bf I-P}\}f\cdot [1,1])+\sum_j\partial^\alpha\partial_j\mathcal{A}_{ji}(\{{\bf I-P}\}f\cdot [1,1]), \partial^\alpha \left\{\partial_i\big(\frac{\rho_++\rho_-}2+2c\big)\right\}\right)\nonumber\\
&\lesssim&\eta\|\partial^{\alpha}\nabla_x\{\rho_++\rho_-\}\|^2+\eta\|\partial^{\alpha}\nabla_xc\|^2
+\left\|\partial^{\alpha}\nabla_x\{{\bf I-P}\}f\right\|_\sigma^2.
\end{eqnarray}
The other terms can be bounded by
\begin{eqnarray}
  &\lesssim&\eta\|\partial^\alpha\nabla_x u\|^2+\left\|\partial^{\alpha}\nabla_x\{{\bf I-P}\}f\right\|_\sigma^2+\mathcal{E}_N(t)\mathcal{D}_N(t).
\end{eqnarray}
Consequently, one has
\begin{eqnarray}\label{G-b}
&&\frac{d}{dt}G^{f}_u(t)+\|\partial^\alpha \nabla_xu\|^2+\sum_i\|\partial^\alpha \partial_iu_i\|^2\nonumber\\
 &\lesssim&\eta\|\partial^{\alpha}\nabla_x\{\rho_++\rho_-\}\|^2+\eta\|\partial^{\alpha}\nabla_xc\|^2+\left\|\partial^{\alpha}\nabla_x\{{\bf I-P}\}f\right\|_\sigma^2+\mathcal{E}_N(t)\mathcal{D}_N(t),
\end{eqnarray}
where $G^{f}_u(t)$ denotes that
\[G^{f}_u(t)\equiv\sum_i\left(\frac12\partial^\alpha\partial_i\mathcal{A}_{ii}(\{{\bf I-P}\}f\cdot [1,1])+\sum_j\partial^\alpha\partial_j\mathcal{A}_{ji}(\{{\bf I-P}\}f\cdot [1,1]), \varepsilon\partial^\alpha u_i\right).\]
Next, we estimate $\rho_++\rho_-$. To this end, we have from $(\ref{Macro-equation1})_2$ and by employing the same argument to deduce $(\ref{G-c})$ that
\begin{equation}\label{G-a}
\frac{d}{dt}G^{f}_\rho(t)+\|\partial^\alpha\nabla_x(\rho_++\rho_-)\|^2\lesssim\|\partial^\alpha\nabla_xu\|^2
+\|\partial^\alpha\nabla_x\theta\|^2
+\left\|\partial^{\alpha}\nabla_x\{{\bf I-P}\}f\right\|_\sigma^2+\mathcal{E}_N(t)\mathcal{D}_N(t)
\end{equation}
with
$$
G^f_{\rho}(t)\equiv\sum_{i}\left(\partial^\alpha u_i, \epsilon\partial^\alpha\partial_i(\rho_++\rho_-)\right).
$$

Set
\begin{equation*}
G^f_{\rho_++\rho_-,u,\theta}(t)=G^{f}_u(t)+\kappa_1G^{f}_\theta(t)+\kappa_2G^{f}_\rho(t), , 0<\kappa_2\ll\kappa_1\ll1.
\end{equation*}
we can deduce from (\ref{G-c}), (\ref{G-b}), and (\ref{G-a}) that
\begin{equation}\label{g-f-}
\begin{aligned}
\frac{d}{dt}G^f_{\rho_++\rho_-,u,\theta}(t)+\left\|\partial^{\alpha}\nabla_x[\rho_++\rho_-,u,\theta]\right\|^2
\lesssim\left\|\{{\bf I-P}\}f\right\|^2_{H^N_xL^2_\sigma}+\mathcal{E}_{N}(t)\mathcal{D}_{N}(t).
\end{aligned}
\end{equation}
Finally, for the corresponding estimate on $\rho_+-\rho_-$, we have from $\eqref{a_+-a_--original}_2$ that
\begin{eqnarray}
&&\left\|\partial^{\alpha}\nabla_x(\rho_+-\rho_-)\right\|^2+2\left\|\partial^{\alpha}(\rho_+-\rho_-)\right\|^2
=\left(\partial^{\alpha}\nabla_x(\rho_+-\rho_-)-2\partial^\alpha E,\partial^{\alpha}\nabla_x(\rho_+-\rho_-)\right)\nonumber\\
&=&\left(\partial^{\alpha}\left\{\frac1\epsilon\nabla_x(\rho_+-\rho_-)-\frac2\epsilon E\right\},\varepsilon\partial^{\alpha}\nabla_x(\rho_+-\rho_-)\right)\nonumber\\
&=&\left(\partial^{\alpha}\left\{-\partial_tG-\frac1\varepsilon\nabla_x\cdot A(\{{\bf I-P}\}f\cdot q_1)\right\},\varepsilon\partial^{\alpha}\nabla_x(\rho_+-\rho_-)\right)\nonumber\\
&&+\left(\partial^{\alpha}\left\{E(\rho_++\rho_-)+\frac2\varepsilon u\times B+\left\langle [v,-v]{M}^{1/2},\frac1{\varepsilon^2}\mathscr{L}f+\frac1\varepsilon\mathscr{T}(f,f)\right\rangle\right\},\varepsilon\partial^{\alpha}\nabla_x(\rho_+-\rho_-)\right)\nonumber\\
&=&-\frac{d}{dt}\left(\partial^{\alpha}G,\varepsilon\partial^{\alpha}\nabla_x(\rho_+-\rho_-)\right)
+\left(\partial^{\alpha}G,\varepsilon\partial^{\alpha}\nabla_x\partial_t(\rho_+-\rho_-)\right)\nonumber\\
&&+\left(\partial^{\alpha}\left\{-\frac1\varepsilon\nabla_x\cdot \mathcal{A}(\{{\bf I-P}\}f\cdot q_1)\right\},\varepsilon\partial^{\alpha}\nabla_x(\rho_+-\rho_-)\right)\nonumber\\
&&+\left(\partial^{\alpha}\left\{E(\rho_++\rho_-)+\frac2\varepsilon u\times B+\left\langle [v,-v]{M}^{1/2},\frac1{\varepsilon^2}\mathscr{L}f+\frac1\varepsilon\mathscr{T}(f,f)\right\rangle\right\},\varepsilon\partial^{\alpha}\nabla_x(\rho_+-\rho_-)\right)\nonumber\\
&=&-\frac{d}{dt}\left(\partial^{\alpha}G,\varepsilon\partial^{\alpha}\nabla_x(\rho_+-\rho_-)\right)
-\left(\partial^{\alpha}G,\varepsilon\partial^{\alpha}\nabla_x
\frac1\varepsilon\nabla_x\cdot G\right)\nonumber\\
&&+\left(\partial^{\alpha}\left\{-\frac1\varepsilon\nabla_x\cdot \mathcal{A}(\{{\bf I-P}\}f\cdot q_1)\right\},\varepsilon\partial^{\alpha}\nabla_x(\rho_+-\rho_-)\right)\nonumber\\
&&+\left(\partial^{\alpha}\left\{E(\rho_++\rho_-)+\frac2\varepsilon u\times B+\left\langle [v,-v]{M}^{1/2},\frac1{\varepsilon^2}\mathscr{L}f+\frac1\varepsilon\mathscr{T}(f,f)\right\rangle\right\},\varepsilon\partial^{\alpha}\nabla_x(\rho_+-\rho_-)\right)\nonumber\\
&=&-\frac{d}{dt}\left(\partial^{\alpha}G,\varepsilon\partial^{\alpha}\nabla_x(\rho_+-\rho_-)\right)+
\left(\partial^{\alpha}\nabla_x\cdot G,\partial^{\alpha}
\nabla_x\cdot G\right)\nonumber\\
&&+\left(\partial^{\alpha}\left\{-\frac1\varepsilon\nabla_x\cdot \mathcal{A}(\{{\bf I-P}\}f\cdot q_1)\right\},\varepsilon\partial^{\alpha}\nabla_x(\rho_+-\rho_-)\right)\nonumber\\
&&+\left(\partial^{\alpha}\left\{E(\rho_++\rho_-)+\frac2\varepsilon u\times B+\left\langle [v,-v]{M}^{1/2},\frac1{\varepsilon^2}\mathscr{L}f+\frac1\varepsilon\mathscr{T}(f,f)\right\rangle\right\},\varepsilon\partial^{\alpha}\nabla_x(\rho_+-\rho_-)\right).\nonumber\\
\end{eqnarray}
Consequently
\begin{eqnarray}\label{g-a}
&&\frac{d}{dt}\left(\partial^{\alpha}G,\varepsilon\partial^{\alpha}\nabla_x(\rho_+-\rho_-)\right)
+\left\|\partial^{\alpha}\nabla_x(\rho_+-\rho_-)\right\|^2+2\left\|\partial^{\alpha}(\rho_+-\rho_-)\right\|^2\nonumber\\
&\lesssim&\left\|\partial^{\alpha}\nabla_x\{{\bf I-P}\}f\right\|_\sigma^2+\mathcal{E}_N(t)\mathcal{D}_N(t).
\end{eqnarray}

A suitable linear combination of (\ref{g-f-}) and (\ref{g-a}) gives
\begin{equation}\label{mac-alpha}
\begin{aligned}
&\frac{d}{dt}G^{f}_{\rho_+\pm\rho_-,u,\theta}(t)+
2\left\|\partial^{\alpha}(\rho_+-\rho_-)\right\|^2+\left\|\partial^{\alpha}\nabla_x[\rho_+\pm\rho_-,u,\theta]\right\|^2\\
\lesssim&\left\|\{{\bf I-P}\}f\right\|^2_{H^N_xL^2_\sigma}+\mathcal{E}_{N}(t)\mathcal{D}_{N}(t)
\end{aligned}
\end{equation}
where
\[
G^{f}_{\rho_+\pm\rho_-,u,\theta}(t)\equiv G^f_{\rho_++\rho_-,u,\theta}(t)+\left(\partial^{\alpha}G,\varepsilon\partial^{\alpha}\nabla_x(\rho_+-\rho_-)\right),
\]
where $|\alpha|\leq N-1$.

Since
\begin{eqnarray}
  \frac2\varepsilon E&=&\partial_tG+\frac1\varepsilon\nabla_x(\rho_+-\rho_-)+\frac1\varepsilon\nabla_x\cdot \mathcal{A}(\{{\bf I-P}\}f\cdot q_1)-E(\rho_++\rho_-)\nonumber\\
  &&-\frac2\varepsilon u\times B-\left\langle [v,-v]{M}^{1/2},\frac1{\varepsilon^2}\mathscr{L}f-\frac1\varepsilon\mathscr{T}(f,f)\right\rangle.
\end{eqnarray}
Applying $\partial^\alpha$ to the above equality and multiplying it with $\varepsilon \partial^\alpha E$, one has
\begin{eqnarray}
 && 2\|\partial^\alpha E\|^2=\left(\partial^\alpha\left\{ \frac2\varepsilon E\right\},\varepsilon\partial^\alpha E\right)\nonumber\\
 &=&\left(\partial^\alpha\left\{ \partial_tG+\frac1\varepsilon\nabla_x(\rho_+-\rho_-)+\frac1\varepsilon\nabla_x\cdot \mathcal{A}(\{{\bf I-P}\}f\cdot q_1)-E(\rho_++\rho_-)\right\},\varepsilon\partial^\alpha E\right)\nonumber\\
 &&+\left(\partial^\alpha\left\{ -\frac2\varepsilon u\times B-\left\langle [v,-v]{M}^{1/2},\frac1{\varepsilon^2}\mathscr{L}f-\frac1\varepsilon\mathscr{T}(f,f)\right\rangle\right\},\varepsilon\partial^\alpha E\right)\nonumber\\
 &=&\frac{d}{dt}\left(\partial^\alpha G,\varepsilon\partial^\alpha E\right)-\left(\partial^\alpha G,\varepsilon\partial^\alpha \left\{\nabla_x \times B  - \tfrac{1}{\eps}G\right\} \right)\nonumber\\
 &&+\left(\partial^\alpha\left\{\frac1\varepsilon\nabla_x(\rho_+-\rho_-)+\frac1\varepsilon\nabla_x\cdot \mathcal{A}(\{{\bf I-P}\}f\cdot q_1)-E(\rho_++\rho_-)\right\},\varepsilon\partial^\alpha E\right)\nonumber\\
 &&+\left(\partial^\alpha\left\{ -\frac2\varepsilon u\times B-\left\langle [v,-v]{M}^{1/2},\frac1{\varepsilon^2}\mathscr{L}f-\frac1\varepsilon\mathscr{T}(f,f)\right\rangle\right\},\varepsilon\partial^\alpha E\right)\nonumber\\
  &=&\frac{d}{dt}\left(\partial^\alpha G,\varepsilon\partial^\alpha E\right)+\|\partial^\alpha G\|^2-\left(\partial^\alpha G,\varepsilon\partial^\alpha \left\{\nabla_x \times B \right\} \right)\nonumber\\
 &&+\left(\partial^\alpha\left\{\frac1\varepsilon\nabla_x(\rho_+-\rho_-)+\frac1\varepsilon\nabla_x\cdot\mathcal{ A}(\{{\bf I-P}\}f\cdot q_1)-E(\rho_++\rho_-)\right\},\varepsilon\partial^\alpha E\right)\nonumber\\
 &&+\left(\partial^\alpha\left\{ -\frac2\varepsilon u\times B-\left\langle [v,-v]{M}^{1/2},\frac1{\varepsilon^2}\mathscr{L}f-\frac1\varepsilon\mathscr{T}(f,f)\right\rangle\right\},\varepsilon\partial^\alpha E\right)\nonumber\\
  &\lesssim&\frac{d}{dt}\left(\partial^\alpha G,\varepsilon\partial^\alpha E\right)-\|\partial^\alpha(\rho_+-\rho_-)\|^2+\|\nabla_xG\|_{H^{N}_x}^2+\eta\|\nabla_x B\|^2_{H^{N-2}_x}+\eta\|\partial^\alpha E\|^2\nonumber\\
 &&+\frac1{\varepsilon^2}\|\partial^\alpha\{{\bf I-P}\}f\|^2_\sigma+\|\partial^\alpha\nabla_x\{{\bf I-P}\}f\|^2_\sigma+\mathcal{E}_N(t)\mathcal{D}_N(t)
\end{eqnarray}
where $|\alpha|\leq N-1$.
Consequently
\begin{eqnarray}\label{E-1-s}
 &&-\frac{d}{dt}\left(\partial^\alpha G,\varepsilon\partial^\alpha E\right)+2\|\partial^\alpha E\|^2+\|\partial^\alpha(\rho_+-\rho_-)\|^2\nonumber\\
 &\lesssim&\|\nabla_xG\|_{H^{N}_x}^2+\eta\|\nabla_x B\|^2_{H^{N-2}_x}\nonumber\\
 &&+\frac1{\varepsilon^2}\|\partial^\alpha\{{\bf I-P}\}f\|^2_\sigma+\|\partial^\alpha\nabla_x\{{\bf I-P}\}f\|^2_\sigma+\mathcal{E}_N(t)\mathcal{D}_N(t).
\end{eqnarray}
For $B$, it follows that for $|\alpha|\leq N-2$
\begin{eqnarray}
&&\left\|\partial^{\alpha}\nabla_xB\right\|^2=\left(\partial^{\alpha}\nabla_xB,\partial^{\alpha}\nabla_xB\right)\nonumber\\
&=&\left(\partial^{\alpha}\nabla_x\times B,\partial^{\alpha}\nabla_x\times B\right)\nonumber\\
&=&\left(\partial^{\alpha}\left\{\partial_tE+\frac1\varepsilon G\right\},\partial^{\alpha}\nabla_x\times B\right)\nonumber\\
&=&\frac{d}{dt}\left(\partial^{\alpha}E,\partial^{\alpha}\nabla_x\times B\right)-\left(\partial^{\alpha}E,\partial^{\alpha}\nabla_x\times \partial_tB\right)+\left(\partial^{\alpha}\left\{\frac1\varepsilon G\right\},\partial^{\alpha}\nabla_x\times B\right)\nonumber\\
&=&\frac{d}{dt}\left(\partial^{\alpha}E,\partial^{\alpha}\nabla_x\times B\right)-\left(\partial^{\alpha}E,\partial^{\alpha}\nabla_x\times \left\{-\nabla_x\times E\right\}\right)+\left(\partial^{\alpha}\left\{\frac1\varepsilon G\right\},\partial^{\alpha}\nabla_x\times B\right)\nonumber\\
&=&\frac{d}{dt}\left(\partial^{\alpha}E,\partial^{\alpha}\nabla_x\times B\right)+\|\partial^{\alpha}\nabla_x\times E\|^2+\left(\partial^{\alpha}\left\{\frac1\varepsilon G\right\},\partial^{\alpha}\nabla_x\times B\right)\nonumber\\
&\lesssim&\frac{d}{dt}\left(\partial^{\alpha}E,\partial^{\alpha}\nabla_x\times B\right)+\|\partial^{\alpha}\nabla_x\times E\|^2+\eta\|\partial^{\alpha}\nabla_x\times B\|^2+\frac1{\varepsilon^2}\|\partial^{\alpha}G\|^2\nonumber
\end{eqnarray}
That is
\begin{eqnarray}\label{B-s}
&&-\frac{d}{dt}\left(\partial^{\alpha}E,\partial^{\alpha}\nabla_x\times B\right)+\left\|\partial^{\alpha}\nabla_xB\right\|^2\nonumber\\
&\lesssim&\|\partial^{\alpha}\nabla_x\times E\|^2+\eta\|\partial^{\alpha}\nabla_x\times B\|^2+\frac1{\varepsilon^2}\|\partial^{\alpha}G\|^2
\end{eqnarray}
for $|\alpha|\leq N-2$.

For sufficiently small $\kappa>0$, (\ref{E-1-s})$+\kappa$(\ref{B-s}) gives
\begin{equation}\label{EB-s-2}
\begin{aligned}
&\frac{d}{dt}G_{E,B}(t)+\left\|E\right\|_{H^{N-1}_x}^2+\left\|\nabla_xB\right\|_{H^{N-2}_x}^2+\left\|\rho_+-\rho_-\right\|^2_{H^{N-1}_x}\\[2mm]
\lesssim&\frac1{\varepsilon^2}\sum_{|\alpha|\leq N}\|\partial^\alpha\{{\bf I-P}\}f\|^2_{\sigma}+\mathcal{E}_N(t)\mathcal{D}_N(t).
\end{aligned}
\end{equation}
Here we set
\[
G_{E,B}(t)=-\left\{\sum_{|\alpha|\leq N-1}\left(\partial^\alpha G,\varepsilon\partial^\alpha E\right)+\kappa\sum_{|\alpha|\leq N-2}\left(\partial^{\alpha}E,\partial^{\alpha}\nabla_x\times B\right)\right\}.
\]
Recalling the definition of $\mathcal{D}_{N,mac}(t)$ in \eqref{def-mac}.
A proper combination of \eqref{mac-alpha} and \eqref{EB-s-2} gives \eqref{mac-dissipation-1}.
 This completes the proof of Lemma \ref{mac-dissipation}.
\end{proof}
{\bf Assumption 1:}
\begin{eqnarray}\label{Assump-1}
\sup_{0<t\leq T}\left\{(1+t)^{1+\epsilon_0}\left\|E\right\|_{L^\infty_x}^2+\mathcal{E}_{N}(t)\right\}\leq M_1
\end{eqnarray}
where $M_1$ is a sufficiently small positive constant.
\begin{proposition}
Under {\bf Assumption 1},
there exist an energy functional $\mathcal{E}_{N}(t)$ and the corresponding energy dissipation functional $\mathcal{D}_{N}(t)$ which satisfy (\ref{Energy-E-D-1}), (\ref{Energy-E-D-2}) respectively such that
\begin{eqnarray}\label{prof-1}
&&\frac{d}{dt}\mathcal{E}_{N}(t)+\mathcal{D}_{N}(t)\nonumber\\
&\lesssim &M_1(1+t)^{-(1+\epsilon_0)}\left\|\langle v\rangle^{\frac 32}\nabla^N_xf\right\|^2+\mathcal{E}_N(t)\mathcal{D}_{N-1,l}(t)
\end{eqnarray}
{\color{red}holds for all $0\leq t\leq T$.}
\end{proposition}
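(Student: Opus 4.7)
The plan is to combine the two estimates already established: the basic energy-dissipation identity from Lemma \ref{lemma3.7} (summed over $|\alpha|\leq N$) and the macroscopic dissipation estimate from Lemma \ref{mac-dissipation}. Summing Lemma \ref{lemma3.7} over all $|\alpha|\leq N$ yields
\begin{equation*}
\frac{d}{dt}\mathcal{E}_{N}(t)+\frac{1}{\eps^2}\|\{\mathbf{I-P}\}f\|^2_{H^N_xL^2_\sigma}
\lesssim \|E\|^2_{L^\infty_x}\|\langle v\rangle^{3/2}\nabla^N_x f\|^2+\mathcal{E}_N(t)\bigl\{\mathcal{D}_N(t)+\mathcal{D}_{N-1,l}(t)\bigr\}+\eta\mathcal{D}_N(t).
\end{equation*}
This controls only the singular microscopic part of $\mathcal{D}_N(t)$. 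To recover the macroscopic dissipation $\mathcal{D}_{N,mac}(t)$, I would add a small constant $\kappa>0$ times the inequality \eqref{mac-dissipation-1} from Lemma \ref{mac-dissipation}. Since $\mathcal{E}^{int}_N(t)\lesssim \mathcal{E}_N(t)$, for $\kappa$ sufficiently small the combined functional $\mathcal{E}_N(t)+\kappa \mathcal{E}^{int}_N(t)$ is nonnegative and equivalent to $\mathcal{E}_N(t)$, so it can serve as the desired $\mathcal{E}_N(t)$ on the left-hand side of \eqref{prof-1}.

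Next I would assemble the dissipation on the left. From the definition \eqref{Energy-E-D-2} and the structure of $\mathbf{P}$, one has $\|\nabla_x\mathbf{P}f\|^2_{H^{N-1}_xL^2_v}\sim \|\nabla_x[\rho_\pm,u,\theta]\|^2_{H^{N-1}_x}$, so
\begin{equation*}
\mathcal{D}_N(t) \sim \frac{1}{\eps^2}\|\{\mathbf{I-P}\}f\|^2_{H^N_xL^2_\sigma}+\mathcal{D}_{N,mac}(t),
\end{equation*}
and the combined inequality therefore provides the full $\mathcal{D}_N(t)$ on the left. The $\eta\mathcal{D}_N(t)$ remainder is absorbed by choosing $\eta$ small, and the $\kappa\mathcal{E}_N(t)\mathcal{D}_N(t)$ contributed by Lemma \ref{mac-dissipation} is absorbed using the smallness $\mathcal{E}_N(t)\leq M_1\ll 1$ from Assumption \ref{Assump-1}; likewise the $\mathcal{E}_N(t)\mathcal{D}_N(t)$ term coming from Lemma \ref{lemma3.7}. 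The only surviving nonlinear contribution on the right is then $\mathcal{E}_N(t)\mathcal{D}_{N-1,l}(t)$, which is precisely the second term in \eqref{prof-1}.

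Finally, the prefactor $\|E\|^2_{L^\infty_x}$ in front of $\|\langle v\rangle^{3/2}\nabla^N_x f\|^2$ is bounded directly via Assumption \ref{Assump-1}, which gives $\|E\|^2_{L^\infty_x}\leq M_1(1+t)^{-(1+\epsilon_0)}$. This produces the first term on the right-hand side of \eqref{prof-1} and completes the derivation.

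I do not expect genuine technical obstacles here since the hard work is already packaged in Lemmas \ref{lemma3.7} and \ref{mac-dissipation}. The only mild subtlety is bookkeeping: one must fix $\kappa$ first (small enough that $\kappa\mathcal{E}^{int}_N\leq \tfrac{1}{2}\mathcal{E}_N$), then choose $\eta$ small relative to $\kappa$ so that $(\eta+\kappa\eta)\mathcal{D}_N$ is strictly dominated by the coefficient of $\mathcal{D}_N$ on the left, and finally use the smallness of $M_1$ to absorb the remaining $\mathcal{E}_N\mathcal{D}_N$ terms. Once these three parameters are chosen in the correct order, \eqref{prof-1} follows immediately.
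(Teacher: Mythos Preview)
Your proposal is correct and follows precisely the paper's approach: the paper's own proof consists of the single sentence ``A proper linear combination of \eqref{lemma3.7-1} and \eqref{mac-dissipation-1} gives \eqref{prof-1},'' and you have spelled out exactly what that linear combination is, including the correct order of choosing $\kappa$, $\eta$, and finally using the smallness of $M_1$ from Assumption~1 to absorb the $\mathcal{E}_N\mathcal{D}_N$ terms and to convert $\|E\|^2_{L^\infty_x}$ into $M_1(1+t)^{-(1+\epsilon_0)}$.
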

\begin{proof}
  A proper linear combination of \eqref{lemma3.7-1} and \eqref{mac-dissipation-1} gives \eqref{prof-1}.
\end{proof}
\section{Lyapunov inequality for the energy functional with weight}
To control the $$M_1(1+t)^{-(1+\epsilon_0)}\left\|\langle v\rangle^{\frac 32}\nabla^N_xf\right\|^2$$ on the right-hand side of \eqref{prof-1}, one need the energy estimate with the weight. However, the coercivity estimates on $\mathscr{L}$ tell us that
\begin{eqnarray*}
  &&\frac1{\varepsilon^2}\left(\mathscr{L}\partial^\alpha f,w^2_{l}(\alpha)\partial^\alpha f\right)\\
  &\gtrsim&\frac1{\varepsilon^2}\|w_{l}(\alpha)\partial^\alpha f\|_\sigma^2-\frac1{\varepsilon^2}\|\partial^\alpha f\|_\sigma^2\\
  &\gtrsim&\frac1{\varepsilon^2}\|w_{l}(\alpha)\partial^\alpha f\|_\sigma^2-\frac1{\varepsilon^2}\left\|\partial^\alpha {\bf P}f\right\|^2-\frac1{\varepsilon^2}\left\|\partial^\alpha \{{\bf I-P}\}f\right\|_\sigma^2,
\end{eqnarray*}
where the singularity term $\frac1{\varepsilon^2}\left\|\partial^\alpha {\bf P}f\right\|^2$ can not be controlled any more. To overcome this difficulty term, we have the following the result:

\begin{lemma}\label{lemma3.8} It holds that
\begin{eqnarray}\label{lemma3.8-1}
&&\frac{d}{dt}\sum_{|\alpha|=N}{\varepsilon^2}\left\|w_l(\alpha)\partial^\alpha f\right\|^2
+\frac{\vartheta q{\varepsilon^2}}{(1+t)^{1+\vartheta}}\sum_{|\alpha|=N}\left\|\langle v\rangle w_l(\alpha)\partial^\alpha f\right\|^2+\sum_{|\alpha|=N}\left\|w_l(\alpha)\partial^\alpha f\right\|^2_\sigma\nonumber\\
&\lesssim&\mathcal{D}_{N}(t)+\mathcal{E}_{N}(t)\mathcal{D}_{N-1,l}(t)+
\sum_{|\alpha|=N}{\varepsilon^2}\left\|\partial^\alpha E\right\|\left\|{M}^\delta\partial^\alpha f\right\|
\end{eqnarray}
{for all $0\leq t\leq T$.}
\end{lemma}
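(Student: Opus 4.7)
\textit{Proof plan.} The plan is to apply $\partial^\alpha$ with $|\alpha|=N$ to the $f$-equation of \eqref{VML-drop-eps}, take the $L^2_{x,v}$ inner product with $w_l^2(\alpha)\partial^\alpha f$, and then multiply the resulting identity by $\varepsilon^2$. This $\varepsilon^2$-rescaling, highlighted in strategy 4) of the outline, is essential: it neutralises the $1/\varepsilon^2$ singularity in front of $\mathscr{L}$ and, at the top order $|\alpha|=N$, turns the would-be uncontrollable term $\tfrac{1}{\varepsilon^2}\|\partial^\alpha{\bf P}f\|^2$ coming from the weighted coercivity into a manageable contribution. Using $\partial_t w_l^2(\alpha) = -\tfrac{2q\vartheta\langle v\rangle^2}{(1+t)^{1+\vartheta}}w_l^2(\alpha)$, the time-derivative step reproduces exactly the two terms on the left-hand side of \eqref{lemma3.8-1}. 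The transport term vanishes after an $x$-integration by parts, and the direct magnetic term $\varepsilon(q_0(v\times B)\cdot\nabla_v\partial^\alpha f, w_l^2(\alpha)\partial^\alpha f)$ vanishes as well because $\nabla_v w_l^2(\alpha)$ is parallel to $v$ while $v\times B$ is orthogonal to $v$.

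For the coercivity of $\mathscr{L}$ (see the Appendix), one has
\[(\mathscr{L}\partial^\alpha f,\,w_l^2(\alpha)\partial^\alpha f)\gtrsim \|w_l(\alpha)\partial^\alpha f\|^2_\sigma - C\|\partial^\alpha f\|^2_\sigma.\]
Splitting $\|\partial^\alpha f\|^2_\sigma\lesssim\|\partial^\alpha{\bf P}f\|^2 + \|\partial^\alpha\{{\bf I-P}\}f\|^2_\sigma$ by macro-micro decomposition and using $|\alpha|=N$, both pieces are absorbed into $\mathcal{D}_N(t)$: the first since $\|\nabla_x{\bf P}f\|^2_{H^{N-1}_xL^2_v}\subset\mathcal{D}_N$, and the second since $\|\partial^\alpha\{{\bf I-P}\}f\|^2_\sigma\leq \tfrac{1}{\varepsilon^2}\|\{{\bf I-P}\}f\|^2_{H^N_xL^2_\sigma}\subset\mathcal{D}_N$ thanks to $\varepsilon\leq 1$.

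The magnetic commutator $\varepsilon\sum_{1\leq|\alpha_1|\leq N}(q_0(v\times\partial^{\alpha_1}B)\cdot\nabla_v\partial^{\alpha-\alpha_1}f, w_l^2(\alpha)\partial^\alpha f)$ is estimated exactly as in strategy 1) of the outline: one writes $w_l^2(\alpha) = w_l(\alpha-\alpha_1)\langle v\rangle^{-3|\alpha_1|}\cdot w_l(\alpha)$, uses $|v\times\partial^{\alpha_1}B|\lesssim|\partial^{\alpha_1}B|\langle v\rangle$, and splits the remaining $\langle v\rangle$ via Cauchy–Schwarz with a $\langle v\rangle^{\pm 1/2}$ pairing so that the derivative-loaded factor enters $\mathcal{D}_{N-1,l}$ while the $\partial^\alpha f$ factor enters the $\sigma$-norm. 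Combined with a Sobolev embedding in $x$ applied to $\partial^{\alpha_1}B$, this produces a bound $\mathcal{E}_N(t)\mathcal{D}_{N-1,l}(t)+\eta\|w_l(\alpha)\partial^\alpha f\|^2_\sigma$, the last piece being absorbed on the left. The direct electric term and its commutators are handled by the same mechanism (using an integration by parts in $v$ to reduce to a gradient of $w_l^2(\alpha)$, which is parallel to $v$, and invoking strategy 2) together with the extra dissipation $\tfrac{\vartheta q\varepsilon^2}{(1+t)^{1+\vartheta}}\|\langle v\rangle w_l(\alpha)\partial^\alpha f\|^2$ for the high-$v$ loss), and the collision nonlinearity $(\partial^\alpha\mathscr{T}(f,f), w_l^2(\alpha)\partial^\alpha f)$ is bounded by the weighted bilinear estimates from the Appendix, again yielding $\mathcal{E}_N(t)\mathcal{D}_{N-1,l}(t)$ plus absorbable error.

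Finally, the linear coupling $\tfrac{1}{\varepsilon}(E\cdot v)\sqrt{M}q_1$ contributes, after $\partial^\alpha$, inner product, and $\varepsilon^2$-multiplication, a term of the form $\varepsilon(\partial^\alpha E\cdot v\sqrt{M}q_1, w_l^2(\alpha)\partial^\alpha f)$. Since $v\sqrt{M}w_l^2(\alpha)$ is pointwise dominated by $M^\delta$ for some $\delta\in(0,\tfrac12)$, Cauchy–Schwarz leaves this as the surviving source term on the right of \eqref{lemma3.8-1} (with the extra factor of $\varepsilon$ harmlessly upgraded to $\varepsilon^2$ via $\varepsilon\leq 1$ and Young's inequality if required). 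Assembling all contributions and absorbing the small multiples of $\|w_l(\alpha)\partial^\alpha f\|^2_\sigma$ into the coercive dissipation on the left yields \eqref{lemma3.8-1}. The main obstacle is the magnetic-commutator estimate: only the simultaneous presence of the $|\alpha|$-dependent algebraic weight $\langle v\rangle^{3(l-|\alpha|)}$ and the $\varepsilon^2$-rescaling allows one to dominate the $1/\varepsilon$ singularity together with the $\langle v\rangle$-growth from $v\times B$ without losing a full $v$-derivative of $f$.
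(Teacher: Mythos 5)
Your proposal follows the paper's route step for step: the weighted energy identity at $|\alpha|=N$, the $\varepsilon^2$-rescaling to tame $\tfrac{1}{\varepsilon^2}\|\partial^\alpha\mathbf{P}f\|^2$, the vanishing of the transport and direct magnetic terms, the absorption of the coercivity defect into $\mathcal{D}_N$, the algebraic-weight bookkeeping $w_l(\alpha)\langle v\rangle^{1/2}\lesssim w_l(\alpha-\alpha_1)\langle v\rangle^{1/2-3|\alpha_1|}$ for the magnetic commutator, the extra dissipation from the exponential weight for the electric terms, and the weighted bilinear estimate for $\mathscr{T}$. All of that is sound and is exactly what the paper does.

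There is, however, one genuine gap: your treatment of the singular linear source $-\tfrac{1}{\varepsilon}(E\cdot v)\sqrt{M}q_1$. After applying $\partial^\alpha$, pairing with $w_l^2(\alpha)\partial^\alpha f$ and multiplying by $\varepsilon^2$, a crude Cauchy--Schwarz gives $\varepsilon\|\partial^\alpha E\|\,\|M^\delta\partial^\alpha f\|$, and your parenthetical claim that this is ``upgraded to $\varepsilon^2$ via $\varepsilon\leq 1$'' runs the inequality the wrong way: $\varepsilon\geq\varepsilon^2$, so $\varepsilon X\not\lesssim\varepsilon^2 X$. The fallback via Young's inequality does not rescue this either, since $\varepsilon\|\partial^\alpha E\|\,\|M^\delta\partial^\alpha f\|\leq\eta\|\partial^\alpha E\|^2+C_\eta\varepsilon^2\|M^\delta\partial^\alpha f\|^2$ produces $\|\nabla_x^N E\|^2$ with no $\varepsilon^2$ prefactor, and that quantity is \emph{not} contained in $\mathcal{D}_N(t)$ (whose electric part is only $\|E\|^2_{H^{N-1}_x}$); the whole point of keeping the product form $\varepsilon^2\|\partial^\alpha E\|\,\|M^\delta\partial^\alpha f\|$ on the right of \eqref{lemma3.8-1} is that this top-order $E$-contribution must be deferred to Proposition \ref{prop1}, where the $(1+t)^{-\frac{1+\epsilon_0}{2}}$ prefactor and the $\varepsilon^2$ are both used to absorb it. To actually gain a full power of $\varepsilon$ here you need to use the structure of the term rather than brute force: since $w_l^2(\alpha)$ is radial in $v$, the pairing of $v\sqrt{M}q_1$ against $w_l^2(\alpha)\mathbf{P}\partial^\alpha f$ vanishes by oddness (the $q_1$-component of $\mathbf{P}\partial^\alpha f$ is $(\rho_+-\rho_-)\sqrt{M}$, which is radial), so only $\{\mathbf{I}-\mathbf{P}\}\partial^\alpha f$ survives, and the leftover $\tfrac{1}{\varepsilon}$ can then be traded against the microscopic dissipation $\tfrac{\eta}{\varepsilon^2}\|\partial^\alpha\{\mathbf{I}-\mathbf{P}\}f\|^2_\sigma\lesssim\eta\mathcal{D}_N(t)$, leaving a term of size $\varepsilon^2\|\partial^\alpha E\|^2$ consistent with the stated right-hand side. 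Without this (or an equivalent) observation, your argument proves the inequality only with $\varepsilon\|\partial^\alpha E\|\,\|M^\delta\partial^\alpha f\|$ in place of $\varepsilon^2\|\partial^\alpha E\|\,\|M^\delta\partial^\alpha f\|$, which is strictly weaker than the lemma and does not feed correctly into the proof of Proposition \ref{prop1}.
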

\begin{proof}
For this purpose, the standard energy estimate on $\partial^\alpha f$ with $|\alpha|= N$ weighted by the time-velocity dependent function $w_{l}(\alpha)(t,v)$ gives

\begin{equation}\label{N-f-w}
\begin{aligned}
&\frac{d}{dt}\left\|w_{l}(\alpha)\partial^\alpha f\right\|^2
+\frac{\vartheta q}{(1+t)^{1+\vartheta}}\left\|\langle v\rangle w_{l}(\alpha)\partial^\alpha f\right\|^2+\frac1{\varepsilon^2}\|w_{l}(\alpha)\partial^\alpha f\|_\sigma^2\\[2mm]
\lesssim&\frac1{\varepsilon^2}\left\|\partial^\alpha {\bf P}f\right\|^2+\frac1{\varepsilon^2}\left\|\partial^\alpha \{{\bf I-P}\}f\right\|_\sigma^2+
\left\|\partial^\alpha E\right\|\left\|{M}^\delta\partial^\alpha f\right\|\\
&+\left|\left(\partial^\alpha (E\cdot vf),w^2_{l}(\alpha)\partial^\alpha f\right)\right|+\left|\left(\partial^\alpha (E\cdot \nabla_vf),w^2_{l}(\alpha)\partial^\alpha f\right)\right|\\[2mm]
&+\frac1{\varepsilon}\left|\left(\partial^\alpha[ v\times B\cdot\nabla_v f],w^2_{l}(\alpha)\partial^\alpha f\right)\right|+\frac1{\varepsilon}\left|\left(\partial^\alpha \mathscr{T}(f,f),w^2_{l}(\alpha)\partial^\alpha f\right)\right|
\end{aligned}
\end{equation}
where we use the coercivity estimates on $\mathscr{L}$.

Due to the absence of the singularity factor $\frac1{\epsilon^2}$ in macroscopic dissipation term, to control $\frac1{\varepsilon^2}\left\|\partial^\alpha {\bf P}f\right\|^2$ on the right-hand side of \eqref{N-f-w}, we have to multiply ${\varepsilon^2}$ to \eqref{N-f-w} such that
\begin{equation}\label{alpha-f-1}
\begin{aligned}
&\frac{d}{dt}{\varepsilon^2}\left\|w_l(\alpha)\partial^\alpha f\right\|^2
+\frac{\vartheta q{\varepsilon^2}}{(1+t)^{1+\vartheta}}\left\|\langle v\rangle w_l(\alpha)\partial^\alpha f\right\|^2+\left\|w_l(\alpha)\partial^\alpha f\right\|^2_\sigma\\[2mm]
\lesssim&\left\|\partial^\alpha {\bf P}f\right\|^2+\left\|\partial^\alpha \{{\bf I-P}\}f\right\|_\sigma^2+
{\varepsilon^2}\left\|\partial^\alpha E\right\|\left\|{M}^\delta\partial^\alpha f\right\|\\
&+{\varepsilon^2}\left|\left(\partial^\alpha (E\cdot vf),w^2_l(\alpha)\partial^\alpha f\right)\right|+{\varepsilon^2}\left|\left(\partial^\alpha (E\cdot \nabla_vf),w^2_l(\alpha)\partial^\alpha f\right)\right|\\[2mm]
&+{\varepsilon}\left|\left(\partial^\alpha[ v\times B\cdot\nabla_v f],w^2_l(\alpha)\partial^\alpha f\right)\right|+{\varepsilon}\left|\left(\partial^\alpha \mathscr{T}(f,f),w^2_l(\alpha)\partial^\alpha f\right)\right|.
\end{aligned}
\end{equation}
By using Sobolev inequalities, one has
\begin{equation}\label{E-N-w}
\begin{aligned}
&{\varepsilon^2}\left|\left(\partial^\alpha (E\cdot vf),w^2_l(\alpha)\partial^\alpha f\right)\right|\\
\lesssim&{\varepsilon^2}\|E\|_{L^\infty_x}\left\|\langle v\rangle^{1/2}w_l(\alpha)\partial^{\alpha}f\right\|
\left\|\langle v\rangle^{1/2}w_l(\alpha)\partial^\alpha f\right\|\\[2mm]
&+\sum_{\substack{1\leq|\alpha_1|\leq N-2}}{\varepsilon^2}\left\|\partial^{\alpha_1}E\right\|_{L^\infty_x}
\left\|\langle v\rangle^{\frac12}w_l(\alpha)\partial^{\alpha-\alpha_1}f\right\|
\left\|\langle v\rangle^{-\frac12}w_l(\alpha)\partial^\alpha f\right\|\\[2mm]
&+\sum_{\substack{|\alpha_1|= N-1}}{\varepsilon^2}\left\|\partial^{\alpha_1}E\right\|_{L^3_x}
\left\|\langle v\rangle^{\frac12}w_l(\alpha)\partial^{\alpha-\alpha_1}f\right\|_{L^6_x}
\left\|\langle v\rangle^{-\frac12}w_l(\alpha)\partial^\alpha f\right\|\\[2mm]
&+{\varepsilon^2}\left\|\partial^{\alpha}E\right\|\left\|\langle v\rangle^{\frac12}w_l(\alpha)f\right\|_{L^\infty_x}
\left\|\langle v\rangle^{-\frac12}w_l(\alpha)\partial^\alpha f\right\|\\
\end{aligned}
\end{equation}
For the first term on the right-hand side of the above inequality, one has by using the interpolation method with respect to velocity weight and Young's inequality,
\begin{eqnarray}
&&{\varepsilon^2}\|E\|_{L^\infty_x}\left\|\langle v\rangle^{1/2}w_l(\alpha)\partial^{\alpha}f\right\|
\left\|\langle v\rangle^{1/2}w_l(\alpha)\partial^\alpha f\right\|\nonumber\\
&\lesssim&{\varepsilon^2}\|E\|_{L^\infty_x}\left\|\langle v\rangle^{-1/2}w_l(\alpha)\partial^{\alpha}f\right\|^{\frac23}\left\|\langle v\rangle w_l(\alpha)\partial^{\alpha}f\right\|^{\frac43}\nonumber\\
&\lesssim&{\varepsilon^6}\|E\|^3_{L^\infty_x}(1+t)^{2(1+\vartheta)}\left\|\langle v\rangle^{-1/2}w_l(\alpha)\partial^{\alpha}f\right\|^{2}\nonumber\\
&&+\eta(1+t)^{-1-\vartheta}\left\|\langle v\rangle w_l(\alpha)\partial^{\alpha}f\right\|^{2}.
\end{eqnarray}
By recalling the definition of the weight $w_l(\alpha)$, one has
\[\langle v\rangle^{\frac12}w_l(\alpha)\lesssim w_l(\alpha-\alpha_1)\langle v\rangle^{\frac12-3|\alpha_1|},\]
which combing with macro-micro decomposition, the last three terms on the right-hand side of \eqref{E-N-w} can be bounded by
\begin{eqnarray}
&&\|E\|_{H^N_x}\left\{\|\nabla_x{\bf P}f\|_{H^{N-2}_x}^2+\sum_{|\alpha|\leq N-1}\|w_{l}(\alpha)\partial^\alpha{\bf \{I-P\}}f\|^2_\sigma\right\}+\eta\|w_{l}(\alpha)\partial^\alpha f\|_\sigma^2\nonumber\\
&\lesssim&\mathcal{E}_N(t)\left\{\mathcal{D}_{N}(t)+\mathcal{D}_{N-1,l}(t)\right\}+\eta\|w_{l}(\alpha)\partial^\alpha f\|_\sigma^2,
\end{eqnarray}
consequently,
\begin{eqnarray}
  &&{\varepsilon^2}\left|\left(\partial^\alpha (E\cdot vf),w^2_l(\alpha)\partial^\alpha f\right)\right|\nonumber\\
 &\lesssim&{\varepsilon^6}\|E\|^3_{L^\infty_x}(1+t)^{2(1+\vartheta)}\left\|\langle v\rangle^{-1/2}w_l(\alpha)\partial^{\alpha}f\right\|^{2}+\mathcal{E}_N(t)\left\{\mathcal{D}_{N}(t)+\mathcal{D}_{N-1,l}(t)\right\}\nonumber\\
&&+\eta(1+t)^{-1-\vartheta}\left\|\langle v\rangle w_l(\alpha)\partial^{\alpha}f\right\|^{2}+\eta\|w_{l}(\alpha)\partial^\alpha f\|_\sigma^2,
\end{eqnarray}
Similarly, one also has
\begin{eqnarray}
&&{\varepsilon^2}\left|\left(\partial^\alpha (E\cdot \nabla_vf),w^2_l(\alpha)\partial^\alpha f\right)\right|\\
&\lesssim&{\varepsilon^6}\|E\|^3_{L^\infty_x}(1+t)^{2(1+\vartheta)}\left\|\langle v\rangle^{-1/2}w_l(\alpha)\partial^{\alpha}f\right\|^{2}+\mathcal{E}_N(t)\left\{\mathcal{D}_{N}(t)+\mathcal{D}_{N-1,l}(t)\right\}\nonumber\\
&&+\eta(1+t)^{-1-\vartheta}\left\|\langle v\rangle w_l(\alpha)\partial^{\alpha}f\right\|^{2}+\eta\|w_{l}(\alpha)\partial^\alpha f\|_\sigma^2,\nonumber
\end{eqnarray}
As for ${\varepsilon}\left|\left(\partial^\alpha[ v\times B\cdot\nabla_v f],w^2_l(\alpha)\partial^\alpha f\right)\right|$, one need to estimate it very carefully because of lack of $\varepsilon-$order,
\begin{eqnarray*}
&&{\varepsilon}\left|\left(\partial^\alpha[ v\times B\cdot\nabla_v f],w^2_l(\alpha)\partial^\alpha f\right)\right|\\
&\leq&{\varepsilon}\left|\left(\partial^\alpha[ v\times B\cdot\nabla_v {\bf P}f],w^2_l(\alpha)\partial^\alpha {\bf P}f\right)\right|\\
&&+{\varepsilon}\left|\left(\partial^\alpha[ v\times B\cdot\nabla_v {\bf P}f],w^2_l(\alpha)\partial^\alpha {\bf \{I-P\}}f\right)\right|\\
&&+{\varepsilon}\left|\left(\partial^\alpha[ v\times B\cdot\nabla_v {\bf \{I-P\}}f],w^2_l(\alpha)\partial^\alpha {\bf P}f\right)\right|\\
&&+{\varepsilon}\left|\left(\partial^\alpha[ v\times B\cdot\nabla_v {\bf \{I-P\}}f],w^2_l(\alpha)\partial^\alpha {\bf \{I-P\}}f\right)\right|\\
&\leq&{\varepsilon}\left|\left(\partial^\alpha[ v\times B\cdot\nabla_v {\bf P}f],w^2_l(\alpha)\partial^\alpha {\bf \{I-P\}}f\right)\right|\\
&&+{\varepsilon}\left|\left(\partial^\alpha[ v\times B\cdot\nabla_v {\bf \{I-P\}}f],w^2_l(\alpha)\partial^\alpha {\bf P}f\right)\right|\\
&&+{\varepsilon}\left|\left(\partial^\alpha[ v\times B\cdot\nabla_v {\bf \{I-P\}}f],w^2_l(\alpha)\partial^\alpha {\bf \{I-P\}}f\right)\right|\\
&\lesssim&\mathcal{E}_N(t)\mathcal{D}_N(t)+\eta\|\partial^\alpha f\|_\sigma^2+{\varepsilon}\left|\left(\partial^\alpha[ v\times B\cdot\nabla_v {\bf \{I-P\}}f],w^2_l(\alpha)\partial^\alpha {\bf \{I-P\}}f\right)\right|.
\end{eqnarray*}
where the last term can be bounded by
\begin{eqnarray*}
 &&{\varepsilon}\left|\left(\partial^\alpha[ v\times B\cdot\nabla_v {\bf \{I-P\}}f],w^2_l(\alpha)\partial^\alpha {\bf \{I-P\}}f\right)\right|\\
 &=&{\varepsilon}\left|\left([ v\times B\cdot\nabla_v \partial^\alpha{\bf \{I-P\}}f],w^2_l(\alpha)\partial^\alpha {\bf \{I-P\}}f\right)\right|\\
 &&+\sum_{1\leq|\alpha_1|\leq N}{\varepsilon}\left|\left([ v\times \partial^{\alpha_1}B\cdot\nabla_v \partial^{\alpha-\alpha_1}{\bf \{I-P\}}f],w^2_l(\alpha)\partial^\alpha {\bf \{I-P\}}f\right)\right|\\
 &=&\sum_{1\leq|\alpha_1|\leq N}{\varepsilon}\left|\left([ v\times \partial^{\alpha_1}B\cdot\nabla_v \partial^{\alpha-\alpha_1}{\bf \{I-P\}}f],w^2_l(\alpha)\partial^\alpha {\bf \{I-P\}}f\right)\right|\\
 &\lesssim&\sum_{1\leq|\alpha_1|\leq N}{\varepsilon}\int_{\mathbb{R}^3_x\times\mathbb{R}^3_v}|\partial^{\alpha_1}B|
 |w_l(\alpha-\alpha_1)\nabla_v \partial^{\alpha-\alpha_1}{\bf \{I-P\}}f\langle v\rangle^{\frac12-3|\alpha_1|}|\\
 &&\times |w_l(\alpha)\partial^\alpha {\bf \{I-P\}}f\langle v\rangle^{-\frac12}|dxdv\\
 &\lesssim&\mathcal{E}_N(t)\mathcal{D}_{N-1,l}(t)+\eta\left\|w_l(\alpha)\partial^\alpha {\bf \{I-P\}}f\langle v\rangle^{-\frac12}\right\|^2.
\end{eqnarray*}
The last term on the right-hand side of \eqref{alpha-f-1} can be bounded by
\begin{equation}
\begin{aligned}
&{\varepsilon}\left|\left(\partial^\alpha \mathscr{T}(f,f),w^2_l(\alpha)\partial^\alpha f\right)\right|\\
\lesssim&\left\|{M}^\delta f\right\|_{L^\infty_x}\left\|w_l(\alpha)\partial^\alpha f\right\|^2_\sigma
 +\left\|{M}^\delta \partial^\alpha f\right\|\left\|w_l(\alpha) f\right\|_{L^\infty_xL^2_\sigma}
 \left\|w_l(\alpha)\partial^\alpha f\right\|_\sigma\\[2mm]
 &+\sum_{1\leq|\alpha_1|\leq|\alpha|-1}\left\|{M}^{\delta}\partial^{\alpha_1}f\right\|_{L^3_x}
 \left\|w_l(\alpha)\partial^{\alpha-\alpha_1}f\right\|_{L^6_xL^2_\sigma}\left\|w_l(\alpha)\partial^\alpha f\right\|_\sigma\\[2mm]
\lesssim& \mathcal{E}_N(t)\left\{\mathcal{D}_{N-1}(t)+\mathcal{D}_{N-1,l}(t)\right\}+\mathcal{E}_N(t)\left\|w_l(\alpha)\partial^\alpha f\right\|^2_\sigma+\eta\left\|w_l(\alpha)\partial^\alpha f\right\|_\sigma^2.
\end{aligned}
\end{equation}
By collecting the related inequalities into \eqref{alpha-f-1}, one has
\begin{eqnarray}\label{alpha-f-1-1}
&&\frac{d}{dt}{\varepsilon^2}\left\|w_l(\alpha)\partial^\alpha f\right\|^2
+\frac{\vartheta q{\varepsilon^2}}{(1+t)^{1+\vartheta}}\left\|\langle v\rangle w_l(\alpha)\partial^\alpha f\right\|^2+\left\|w_l(\alpha)\partial^\alpha f\right\|^2_\sigma\nonumber\\[2mm]
&\lesssim&\left\|\partial^\alpha {\bf P}f\right\|^2+\left\|\partial^\alpha \{{\bf I-P}\}f\right\|_\sigma^2+
{\varepsilon^2}\left\|\partial^\alpha E\right\|\left\|{M}^\delta\partial^\alpha f\right\|\nonumber\\
&&+\mathcal{E}_N(t)\left\{\mathcal{D}_{N-1}(t)+\mathcal{D}_{N-1,l}(t)\right\},
\end{eqnarray}
where we use the assumption.
Thus we complete the proof of Proposition \ref{lemma3.8}.
\end{proof}
In addition to the above highest-order energy estimates with the wight $w(\alpha)$ for $|\alpha|=N$, to avoid the macroscopic part with the singularity factor $\frac 1{\epsilon^2}$, for the other cases with weight, we applying the micro projection equality. To this end, by applying the microscopic projection $\{{\bf I-P}\}$ to the first equation of (\ref{VML-drop-eps}), we can get that
\begin{eqnarray}\label{I-P}
&&\partial_t\{{\bf I-P}\}f+\frac1\varepsilon v\cdot \nabla_x\{{\bf I-P}\}f-\frac1 \varepsilon E\cdot v{M}^{1/2}q_1+\frac1 {\varepsilon^2} \mathscr{L}f\nonumber\\
&=&\{{\bf I-P}\}I_{tri}+\frac1 \varepsilon{\bf P}(v\cdot\nabla_x f)-\frac1 \varepsilon v\cdot\nabla_x{\bf P}f.
\end{eqnarray}
where
\[I_{tri}=\frac12 v\cdot E f+q_1\left(E+\frac1\varepsilon v\times B\right)\cdot\nabla_{ v}f+\frac1\varepsilon\mathscr{T}(f,f)\]

\begin{lemma}\label{lemma-micro}
There exist an energy functional $\mathcal{E}_{N-1,l}(t)$ and the corresponding energy dissipation functional $\mathcal{D}_{N-1,l}(t)$ which satisfy \eqref{E-N-1-w} and \eqref{D-N-1-w} respectively such that
  \begin{eqnarray}\label{lemma3.8-2}
&&\frac{d}{dt}\mathcal{E}_{N-1,l}(t)+\mathcal{D}_{N-1,l}(t)
\lesssim\mathcal{D}_{N}(t)+\mathcal{E}_{N}(t)\mathcal{D}_{N-1,l}(t)
\end{eqnarray}
\end{lemma}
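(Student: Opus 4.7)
The plan is to run a weighted energy estimate directly on the microscopic projection equation \eqref{I-P}, which is precisely the device that will avoid the obstructive term $\varepsilon^{-2}\|\partial^\alpha {\bf P}f\|^2$ that forced the $\varepsilon^2$-prefactor in Lemma \ref{lemma3.8}. For each multi-index $\alpha$ with $|\alpha|\le N-1$, I will apply $\partial^\alpha$ to \eqref{I-P} and take the $L^2_{x,v}$ inner product with $w_\ell^2(\alpha)\partial^\alpha\{{\bf I-P}\}f$. Summing over $|\alpha|\le N-1$ should produce \eqref{lemma3.8-2}.

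The time derivative on the left-hand side yields
\begin{equation*}
\tfrac{1}{2}\tfrac{d}{dt}\|w_\ell(\alpha)\partial^\alpha\{{\bf I-P}\}f\|^2 + \tfrac{q\vartheta}{(1+t)^{1+\vartheta}}\|\langle v\rangle w_\ell(\alpha)\partial^\alpha\{{\bf I-P}\}f\|^2,
\end{equation*}
the second term arising from differentiating the factor $e^{q\langle v\rangle^2/(1+t)^\vartheta}$ in $t$; this is precisely the extra dissipation recorded in \eqref{D-N-1-w}. The transport term $\varepsilon^{-1}v\cdot\nabla_x\partial^\alpha\{{\bf I-P}\}f$ integrates to zero in $x$ because $w_\ell(\alpha)$ is $x$-independent. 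The weighted $\mathscr{L}$-coercivity bound from the appendix then supplies the microscopic dissipation $\varepsilon^{-2}\|w_\ell(\alpha)\partial^\alpha\{{\bf I-P}\}f\|_\sigma^2$, up to a lower-weight remainder absorbed by $|v|$-interpolation against the extra dissipation just generated.

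For the macroscopic source terms on the right of \eqref{I-P}, namely $\varepsilon^{-1}{\bf P}(v\cdot\nabla_x f)$, $\varepsilon^{-1}v\cdot\nabla_x{\bf P}f$ and $\varepsilon^{-1}E\cdot v\sqrt{M}\,q_1$, each is of the form $\varepsilon^{-1}\times$(Gaussian-decaying velocity factor)$\times$(first-order macroscopic quantity). I will pair them with $w_\ell^2(\alpha)\partial^\alpha\{{\bf I-P}\}f$ and Cauchy--Schwarz with an $\eta/\varepsilon^2$ split into $\|\nabla_x{\bf P}f\|_{H^{N-1}_x}^2 + \|E\|_{H^{N-1}_x}^2 + \eta\varepsilon^{-2}\|w_\ell(\alpha)\partial^\alpha\{{\bf I-P}\}f\|_\sigma^2$; the Gaussian decay annihilates any polynomial weight, and the first two terms are absorbed into $\mathcal{D}_N(t)$. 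The nonlinear pieces $\varepsilon^{-1}\mathscr{T}(f,f)$ and $\tfrac12 q_0(E\cdot v)f$ are treated exactly as in the proof of Lemma \ref{lemma3.8}, contributing $\mathcal{E}_N(t)\mathcal{D}_{N-1,\ell}(t)$.

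The delicate step, as flagged in the outline, will be the Lorentz piece $\varepsilon^{-1}\partial^\alpha\{(v\times B)\cdot\nabla_v f\}$. When the derivative falls entirely on $f$, an integration by parts in $v$ kills the contribution since $v\times B\perp v$ and $w_\ell(\alpha)$ depends on $v$ only through $\langle v\rangle$. For the commutator pieces $(v\times\partial^{\alpha_1}B)\cdot\nabla_v\partial^{\alpha-\alpha_1}f$ with $1\le|\alpha_1|\le|\alpha|$, I will invoke the algebraic weight shift
\begin{equation*}
\langle v\rangle^{1/2} w_\ell(\alpha) \lesssim w_\ell(\alpha-\alpha_1)\langle v\rangle^{1/2-3|\alpha_1|},
\end{equation*}
provided by the factor $\langle v\rangle^{3(\ell-|\alpha|)}$, and then distribute weights so that one factor carries $\langle v\rangle^{1/2}|\nabla_v\partial^{\alpha-\alpha_1}\{{\bf I-P}\}f|$ absorbable into $\mathcal{D}_{N-1,\ell}$, and the other carries $\langle v\rangle^{-1/2}w_\ell(\alpha)|\partial^\alpha\{{\bf I-P}\}f|$ absorbable into the $\sigma$-dissipation. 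A standard Sobolev split on $\partial^{\alpha_1}B$ ($L^\infty_x$ when $|\alpha_1|\le N_0$, $L^3_x$--$L^6_x$ otherwise) then yields $\mathcal{E}_N(t)\mathcal{D}_{N-1,\ell}(t) + \eta\varepsilon^{-2}\|w_\ell(\alpha)\partial^\alpha\{{\bf I-P}\}f\|_\sigma^2$. This Lorentz bookkeeping is the genuine obstacle: the singular $\varepsilon^{-1}$ must be paid for by the microscopic dissipation, and doing so requires that the algebraic decrease of $w_\ell$ in $|\alpha|$ exactly match the number of $v$-powers extracted from $\nabla_v$ and the cross-product, which is the reason the exponent $3(\ell-|\alpha|)$ is chosen in \eqref{weight}.
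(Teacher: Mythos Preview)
Your approach is essentially the same as the paper's: both run the weighted energy estimate on the microscopic projection equation \eqref{I-P}, exploit the $t$-differentiation of the exponential weight to generate the extra dissipation in $\mathcal{D}_{N-1,\ell}$, and handle the Lorentz commutator via the algebraic weight shift built into $w_\ell(\alpha)$. One minor correction: the remainder from the weighted coercivity of $\mathscr{L}$ is $\varepsilon^{-2}\|\partial^\alpha\{{\bf I-P}\}f\|_\sigma^2$, which carries the full $\varepsilon^{-2}$ singularity and therefore cannot be absorbed by velocity interpolation into the extra dissipation $\tfrac{q\vartheta}{(1+t)^{1+\vartheta}}\|\langle v\rangle w_\ell(\alpha)\partial^\alpha\{{\bf I-P}\}f\|^2$ (the latter has no $\varepsilon^{-2}$ prefactor). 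The paper simply places this remainder on the right-hand side as part of $\mathcal{D}_N(t)$, which is precisely why $\mathcal{D}_N(t)$ appears in \eqref{lemma3.8-2}.
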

\begin{proof}
For the weighted energy estimate on $\{{\bf I-P}\}\partial^\alpha f$ with $|\alpha|\leq N-1$, by applying $\partial^\alpha$ into \eqref{I-P}, multiplying $w^2_{l}(\alpha)\partial^\alpha\{{\bf I-P}\}f$ and integrating the result identity over $\mathbb{R}^3_x\times\mathbb{R}^3_v$, then we have
\begin{eqnarray}\label{I-P-w}
&&\frac12\frac{d}{dt}\|w_l(\alpha)\partial^\alpha\{{\bf I-P}\}f\|^2+\frac{q\vartheta}{(1+t)^{1+\vartheta}}\left\|\langle v\rangle w_{l}(\alpha)\partial^\alpha\{{\bf I-P}\}f\right\|^2\nonumber\\
&&+\frac1 {\varepsilon^2} \left(\partial^\alpha\mathscr{L}f,w^2_l(\alpha)\partial^\alpha\{{\bf I-P}\}f\right)\nonumber\\
&=&-\frac1\varepsilon\left(\partial^\alpha v\cdot \nabla_x\{{\bf I-P}\}f,w^2_l(\alpha)\partial^\alpha\{{\bf I-P}\}f\right)\nonumber\\
&&+\frac1 \varepsilon \left(\partial^\alpha E\cdot v{M}^{1/2}q_1,w^2_l(\alpha)\partial^\alpha\{{\bf I-P}\}f\right)\nonumber\\
&&+\frac1 \varepsilon\left(\partial^\alpha\left\{{\bf P}(v\cdot\nabla_x f)-\frac1 \varepsilon v\cdot\nabla_x{\bf P}f\right\},w^2_l(\alpha)\partial^\alpha\{{\bf I-P}\}f\right)\nonumber\\
&&+\left(\partial^\alpha\{{\bf I-P}\}I_{tri},w^2_l(\alpha)\partial^\alpha\{{\bf I-P}\}f\right).
\end{eqnarray}
The coercivity estimates on the linear operator $\mathscr{L}$ yields that
 \begin{eqnarray}
   &&\frac1 {\varepsilon^2} \left(\partial^\alpha\mathscr{L}f,w^2_l(\alpha)\partial^\alpha\{{\bf I-P}\}f\right)\nonumber\\
   &\gtrsim&\frac1 {\varepsilon^2} \|w_l(\alpha)\partial^\alpha\{{\bf I-P}\}f\|_\sigma^2-\frac1 {\varepsilon^2}\|\partial^\alpha\{{\bf I-P}\}f\|_\sigma^2.
 \end{eqnarray}
 Integrating by part over $\mathbb{R}^3_x$, one has
 \[\frac1\varepsilon\left(\partial^\alpha v\cdot \nabla_x\{{\bf I-P}\}f,w^2_l(\alpha)\partial^\alpha\{{\bf I-P}\}f\right)=0.\]
 By using the Cauchy inequality, one has
\begin{eqnarray}
  &&\frac1 \varepsilon \left(\partial^\alpha E\cdot v{M}^{1/2}q_1,w^2_l(\alpha)\partial^\alpha\{{\bf I-P}\}f\right)\nonumber\\
  &\lesssim&\|\partial^\alpha E\|^2+\eta\frac1 {\varepsilon^2}\|\partial^\alpha\{{\bf I-P}\}f\|_\sigma^2.
\end{eqnarray}
and
\begin{eqnarray}
  &&\frac1 \varepsilon\left(\partial^\alpha\left\{{\bf P}(v\cdot\nabla_x f)-\frac1 \varepsilon v\cdot\nabla_x{\bf P}f\right\},w^2_l(\alpha)\partial^\alpha\{{\bf I-P}\}f\right)\nonumber\\
  &\lesssim&\|\nabla^{|\alpha|+1}f\|_\sigma^2+\eta\frac1 {\varepsilon^2}\|\partial^\alpha\{{\bf I-P}\}f\|_\sigma^2.
\end{eqnarray}
As for the last term on the right-hand side of \eqref{I-P-w}, by using the similar argument as the estimate on \eqref{E-N-w}, one has
\begin{eqnarray}
  &&\left(\partial^\alpha\{{\bf I-P}\}I_{tri},w^2_l(\alpha)\partial^\alpha\{{\bf I-P}\}f\right)\nonumber\\
&\lesssim&{\varepsilon^6}\|E\|^3_{L^\infty_x}(1+t)^{2(1+\vartheta)}\left\|\langle v\rangle^{-1/2}w_l(\alpha)\partial^{\alpha}\{{\bf I-P}\}f\right\|^{2}+\mathcal{E}_N(t)\left\{\mathcal{D}_{N}(t)+\mathcal{D}_{N-1,l}(t)\right\}\nonumber\\
&&+\eta(1+t)^{-1-\vartheta}\left\|\langle v\rangle w_l(\alpha)\partial^{\alpha}\{{\bf I-P}\}f\right\|^{2}+\eta\|w_{l}(\alpha)\partial^\alpha \{{\bf I-P}\}f\|_\sigma^2
\end{eqnarray}
 Now by collecting the above related estimates into \eqref{I-P-w}, we arrive at
 \begin{equation}\label{3.68}
\begin{aligned}
&\frac{d}{dt}\left\|w_{l}(\alpha)\partial^\alpha\{{\bf I-P}\}f\right\|^2+\frac1{\varepsilon^2}\left\|w_{l}(\alpha)\partial^\alpha\{{\bf I-P}\}f\right\|^2_\sigma\\[2mm]
&+\frac{q\vartheta}{(1+t)^{1+\vartheta}}\left\|\langle v\rangle w_{l}(\alpha)\partial^\alpha\{{\bf I-P}\}f\right\|^2\\[2mm]
\lesssim&\frac1{\varepsilon^2}\left\|\partial^\alpha\{{\bf I-P}\}f\right\|^2_\sigma+\|\partial^\alpha E\|^2+\left\|\nabla_x^{|\alpha|+1} f\right\|^2\\
&+\mathcal{E}_N(t)\left\{\mathcal{D}_{N-1}(t)+\mathcal{D}_{N-1,l}(t)\right\}.
\end{aligned}
\end{equation}

A proper linear combination of (\ref{lemma3.7-1}) and (\ref{3.68}) implies \eqref{lemma3.8-1}.
\end{proof}
To control
\[\sum_{|\alpha|=N}{\varepsilon^2}\left\|\partial^\alpha E\right\|\left\|{M}^\delta\partial^\alpha f\right\|\]
on the right-hand side of \eqref{lemma3.8-1}, by multiplying $(1+t)^{-\epsilon_0}$ into \eqref{lemma3.8-1}, one has
\begin{proposition}\label{prop1}
Under {\bf Assumption 1},
take $l\geq N+\frac12$, we can deduce that
\begin{eqnarray}\label{prop1-1}
&&\frac{d}{dt}\left\{(1+t)^{-\frac{1+\epsilon_0}2}\sum_{|\alpha|=N}{\varepsilon^2}\left\|w_l(\alpha)\partial^\alpha f\right\|^2+\mathcal{E}_{N-1,l}(t)+\mathcal{E}_{N}(t)\right\}\nonumber\\
&&+\mathcal{D}_{N}(t)+\mathcal{D}_{N-1,l}(t)
\lesssim0
\end{eqnarray}
{ holds for all $0\leq t\leq T$.}
\end{proposition}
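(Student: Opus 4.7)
The plan is to obtain the stated Lyapunov inequality as a time-weighted linear combination of three previously established estimates, namely the unweighted energy identity \eqref{prof-1}, the top-order weighted estimate \eqref{lemma3.8-1}, and the lower-order weighted microscopic estimate \eqref{lemma3.8-2}. Specifically, I would form
$$
\mathcal{X}(t) := (1+t)^{-\frac{1+\epsilon_0}{2}}\sum_{|\alpha|=N}\varepsilon^{2}\|w_l(\alpha)\partial^\alpha f\|^2 + \mathcal{E}_{N-1,l}(t) + \mathcal{E}_{N}(t),
$$
and show that $\frac{d}{dt}\mathcal{X}(t) + \mathcal{D}_{N}(t) + \mathcal{D}_{N-1,l}(t) \lesssim 0$. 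The role of the factor $(1+t)^{-\frac{1+\epsilon_0}{2}}$ is twofold: by the product rule, its time-derivative generates an extra nonnegative dissipation-like term $\frac{1+\epsilon_0}{2}(1+t)^{-\frac{3+\epsilon_0}{2}}\sum_{|\alpha|=N}\varepsilon^2 \|w_l(\alpha)\partial^\alpha f\|^2$ on the left-hand side (which I can harmlessly discard), and it damps the problematic right-hand source term in \eqref{lemma3.8-1} so that it becomes absorbable by $\mathcal{D}_N$.

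The first key absorption concerns the source $M_1(1+t)^{-(1+\epsilon_0)}\|\langle v\rangle^{3/2}\nabla^N_x f\|^2$ on the RHS of \eqref{prof-1}. Since $l\geq N+\tfrac12$, the algebraic part of $w_l(\alpha)$ dominates $\langle v\rangle^{3/2}$ at order $|\alpha|=N$, so this is bounded by $M_1(1+t)^{-(1+\epsilon_0)}\sum_{|\alpha|=N}\|w_l(\alpha)\partial^\alpha f\|^2$. I would then use the pointwise bound $|g|^2 \leq (\langle v\rangle|g|)^{2/3}(\langle v\rangle^{-1/2}|g|)^{4/3}$ with Young's inequality to interpolate
$$
\|w_l(\alpha)\partial^\alpha f\|^2 \;\lesssim\; \lambda^{-2}\|\langle v\rangle w_l(\alpha)\partial^\alpha f\|^2 + \lambda\,\|w_l(\alpha)\partial^\alpha f\|_\sigma^2,
$$
with $\lambda = \lambda(t)$ an appropriate power of $(1+t)$; the two resulting pieces are then absorbed, respectively, by the extra dissipation $(1+t)^{-\frac{1+\epsilon_0}{2}}\cdot\frac{q\vartheta\varepsilon^2}{(1+t)^{1+\vartheta}}\|\langle v\rangle w_l(\alpha)\partial^\alpha f\|^2$ and the $\sigma$-dissipation $(1+t)^{-\frac{1+\epsilon_0}{2}}\|w_l(\alpha)\partial^\alpha f\|_\sigma^2$ produced by \eqref{lemma3.8-1} after multiplication by $(1+t)^{-\frac{1+\epsilon_0}{2}}$, using the smallness of $M_1$.

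The second absorption concerns the term $\sum_{|\alpha|=N}\varepsilon^2\|\partial^\alpha E\|\|M^\delta \partial^\alpha f\|$ from \eqref{lemma3.8-1}. I would apply macro-micro decomposition $\|M^\delta \partial^\alpha f\|^2 \lesssim \|\partial^\alpha\mathbf{P}f\|^2 + \|\partial^\alpha\{\mathbf{I}-\mathbf{P}\}f\|_\sigma^2$: the macroscopic piece for $|\alpha|=N\geq 1$ is controlled by $\|\nabla_x \mathbf{P}f\|^2_{H^{N-1}}\subset \mathcal{D}_N$, while the microscopic piece is controlled by $\varepsilon^2\mathcal{D}_N$. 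Cauchy-Schwarz combined with $\|\partial^\alpha E\|^2 \leq \mathcal{E}_N\leq M_1$ then yields absorption into $\eta \mathcal{D}_N$ with small $\eta$. The remaining cross terms $\mathcal{E}_N(\mathcal{D}_N + \mathcal{D}_{N-1,l})$ appearing throughout all three estimates are handled by the smallness of $M_1$ from Assumption 1, and absorbed into $\mathcal{D}_N + \mathcal{D}_{N-1,l}$ on the left-hand side.

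The main obstacle is the balancing in the first absorption step: the interpolation parameter $\lambda$ must be chosen large enough that the $\langle v\rangle$-weighted piece is absorbed by the weak extra dissipation (which carries an $\varepsilon^2$ factor that is \emph{not} small), yet simultaneously small enough that the $\sigma$-norm piece is absorbed by the genuine $\sigma$-dissipation (which lacks any $\varepsilon^2$ factor). The compatibility of these two requirements is precisely what fixes the exponent $\frac{1+\epsilon_0}{2}$ in the time-weight and requires the hypothesis $M_1 \ll 1$; this is the delicate quantitative heart of the proof. Once these absorptions are carried out, a linear combination of \eqref{lemma3.8-1} multiplied by $(1+t)^{-\frac{1+\epsilon_0}{2}}$ with \eqref{lemma3.8-2} and \eqref{prof-1} (the last two with small positive coefficients chosen to eliminate the $\mathcal{D}_N$'s appearing on their right-hand sides) yields the claimed inequality \eqref{prop1-1}.
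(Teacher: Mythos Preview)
Your overall architecture is right—the result is indeed obtained by combining \eqref{prof-1}, \eqref{lemma3.8-1} time-weighted by $(1+t)^{-\frac{1+\epsilon_0}{2}}$, and \eqref{lemma3.8-2}—but both of your absorption steps have defects, and one of them is a genuine gap.

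\textbf{The $E$-term.} Your treatment of $(1+t)^{-\frac{1+\epsilon_0}{2}}\sum_{|\alpha|=N}\varepsilon^{2}\|\partial^\alpha E\|\,\|M^\delta\partial^\alpha f\|$ does not close. After Cauchy you are left with a piece proportional to $\varepsilon^{4}\|\nabla_x^{N}E\|^{2}$ (or, in your phrasing, a constant $C_\eta M_1\varepsilon^{4}$), and $\|\nabla_x^{N}E\|^{2}$ is \emph{not} contained in $\mathcal{D}_N$ (only $\|E\|_{H^{N-1}_x}^{2}$ is). So the claimed absorption into $\eta\,\mathcal{D}_N$ fails, and the residual prevents the conclusion $\lesssim 0$. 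The paper's remedy is the step you omitted: it \emph{also} multiplies \eqref{prof-1} by $(1+t)^{-\epsilon_0}$. The product rule then produces an extra nonnegative term $\epsilon_0(1+t)^{-1-\epsilon_0}\mathcal{E}_N(t)$ on the left-hand side, and since $\|\nabla_x^{N}E\|^{2}\leq \mathcal{E}_N$, this absorbs $\eta(1+t)^{-1-\epsilon_0}\|\nabla_x^{N}E\|^{2}$ coming from the Cauchy split of the $E$-term. The final inequality \eqref{prop1-1} is then obtained by combining this time-weighted version of \eqref{prof-1} with \eqref{lemma3.8-1-1}, and adding the \emph{untweighted} \eqref{prof-1} and \eqref{lemma3.8-2} back in so that $\mathcal{E}_N$ and $\mathcal{D}_N$ appear with their stated coefficients.

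\textbf{The first absorption.} Your interpolation route for $M_1(1+t)^{-(1+\epsilon_0)}\|\langle v\rangle^{3/2}\nabla_x^{N}f\|^{2}$ funnels part of the term into the extra dissipation $(1+t)^{-\frac{1+\epsilon_0}{2}}\frac{q\vartheta\varepsilon^{2}}{(1+t)^{1+\vartheta}}\|\langle v\rangle w_l(\alpha)\partial^\alpha f\|^{2}$, but that term carries a factor $\varepsilon^{2}$ and therefore vanishes as $\varepsilon\to 0$; the compatibility window for your parameter $\lambda$ forces $\lambda\gtrsim M_1^{1/2}\varepsilon^{-1}(1+t)^{\cdots}$, which is not uniform in $\varepsilon$. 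This is avoidable: since $l\geq N+\tfrac12$ gives $w_l(\alpha)\geq \langle v\rangle^{3/2}$ at $|\alpha|=N$, one has directly $\|\langle v\rangle^{3/2}\partial^\alpha f\|^{2}\lesssim \|w_l(\alpha)\partial^\alpha f\|_\sigma^{2}$, and with $M_1$ small and $(1+t)^{-(1+\epsilon_0)}\leq (1+t)^{-\frac{1+\epsilon_0}{2}}$ the term is absorbed by the $\sigma$-dissipation alone—no interpolation, no $\varepsilon^{2}$-dependent piece. This is what the paper does.
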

\begin{proof}
  By multiplying $(1+t)^{-\epsilon_0}$ into \eqref{prof-1}, one has
  \begin{eqnarray}\label{prof-1-1}
&&\frac{d}{dt}\left\{(1+t)^{-\epsilon_0}\mathcal{E}_{N}(t)\right\}+\epsilon_0(1+t)^{-1-\epsilon_0}\mathcal{E}_{N}(t)+(1+t)^{-\epsilon_0}\mathcal{D}_{N}(t)\nonumber\\
&\lesssim &M_1(1+t)^{-(1+2\epsilon_0)}\left\|\langle v\rangle^{\frac 32}\nabla^N_xf\right\|^2+\mathcal{E}_N(t)\mathcal{D}_{N-1,l}(t)
\end{eqnarray}
By multiplying $(1+t)^{-\frac{1+\epsilon_0}2}$ into \eqref{lemma3.8-1}, one has
\begin{eqnarray}\label{lemma3.8-1-1}
&&\frac{d}{dt}\left\{(1+t)^{-\frac{1+\epsilon_0}2}\sum_{|\alpha|=N}{\varepsilon^2}\left\|w_l(\alpha)\partial^\alpha f\right\|^2\right\}\nonumber\\
&&+(1+t)^{-\frac{1+\epsilon_0}2}\frac{\vartheta q{\varepsilon^2}}{(1+t)^{1+\vartheta}}\sum_{|\alpha|=N}\left\|\langle v\rangle w_l(\alpha)\partial^\alpha f\right\|^2\nonumber\\
&&+(1+t)^{-\frac{1+\epsilon_0}2}\sum_{|\alpha|=N}\left\|w_l(\alpha)\partial^\alpha f\right\|^2_\sigma\nonumber\\
&\lesssim&(1+t)^{-\frac{1+\epsilon_0}2}\mathcal{D}_{N}(t)+(1+t)^{-\frac{1+\epsilon_0}2}\mathcal{E}_{N}(t)\mathcal{D}_{N-1,l}(t)\nonumber\\
&&+(1+t)^{-\frac{1+\epsilon_0}2}
\sum_{|\alpha|=N}{\varepsilon^2}\left\|\partial^\alpha E\right\|\left\|{M}^\delta\partial^\alpha f\right\|\nonumber\\
&\lesssim&\mathcal{D}_{N}(t)+\mathcal{E}_{N}(t)\mathcal{D}_{N-1,l}(t)+\eta(1+t)^{-1-\epsilon_0}\|\nabla^N_xE\|^2.
\end{eqnarray}
By applying a proper linear combination of \eqref{prof-1-1} and \eqref{lemma3.8-1-1}, one has
\begin{eqnarray}\label{prop1-1-1}
&&\frac{d}{dt}\left\{(1+t)^{-\epsilon_0}\mathcal{E}_{N}(t)+(1+t)^{-\frac{1+\epsilon_0}2}\sum_{|\alpha|=N}{\varepsilon^2}\left\|w_l(\alpha)\partial^\alpha f\right\|^2\right\}\nonumber\\
&&+\epsilon_0(1+t)^{-1-\epsilon_0}\mathcal{E}_{N}(t)+(1+t)^{-\epsilon_0}\mathcal{D}_{N}(t)\nonumber\\
&&+(1+t)^{-\frac{1+\epsilon_0}2}\frac{\vartheta q{\varepsilon^2}}{(1+t)^{1+\vartheta}}\sum_{|\alpha|=N}\left\|\langle v\rangle w_l(\alpha)\partial^\alpha f\right\|^2\nonumber\\
&&+(1+t)^{-\frac{1+\epsilon_0}2}\sum_{|\alpha|=N}\left\|w_l(\alpha)\partial^\alpha f\right\|^2_\sigma\nonumber\\
&\lesssim&\mathcal{D}_{N}(t)+\mathcal{E}_{N}(t)\mathcal{D}_{N-1,l}(t).\nonumber
\end{eqnarray}
Thus \eqref{prop1-1} follows from \eqref{prof-1},  \eqref{lemma3.8-2}, and \eqref{prop1-1-1}, which complete the proof of this proposition.
\end{proof}
\section{The temporal time decay estimate on $\mathcal{E}^k_{N_0}(t)$}
To ensure {\bf Assumption} 1 in \eqref{Assump-1}, this section is devoted into the temporal time decay rates for $[f,E,B]$ to
the Cauchy problem \eqref{VML-drop-eps}.

In fact, from Proposition \ref{prop1}, we only need to guarantee the bound on $(1+t)^{1+\epsilon_0}\left\|E\right\|_{L^\infty_x}^2$. For this purpose, since
\[H^2_x\hookrightarrow L^\infty_x\]
for $x\in \mathbb{R}^3_x$, we need to deduce the temporal time decay estimate on $\mathcal{E}^k_{N_0}(t)$ satisfying \eqref{E-k} for $N_0\geq 3$.

To state for brevity, we define
\begin{equation}\label{def-E-N-bar}
\mathcal{\bar{E}}_{N_0,\ell}(t)=\mathcal{{E}}_{N_0}(t)+\mathcal{{E}}_{N_0,\ell}(t)+\left\|\Lambda^{-\frac12}[f,E,B]\right\|^2
\end{equation}

\subsection{Temporal time decay estimate on $\mathcal{E}^k_{N_0}(t)$}
\begin{lemma}\label{lemma4.3} Under {\bf Assumption} 1, let $N_0\geq 3$, $N=2N_0$, one has the following estimates:
\begin{itemize}
\item[(i).] For $k=0,1,\cdots,N_0-2$, it holds that
\begin{equation}\label{Lemma4.3-1}
\begin{aligned}
&\frac{d}{dt}\left(\left\|\nabla^kf\right\|^2+\left\|\nabla^k[E,B]\right\|^2\right)+\frac1{\varepsilon^2}\left\|\nabla^k\{{\bf I-P}\}f\right\|^2_{\sigma}\\[2mm]
\lesssim&\mathcal{\bar{E}}_{N_0,N}(t)\left(\left\|\nabla^{k+1}[E,B]\right\|^2+\left\|\nabla^{k+1}{\bf P}f\right\|^2\right)+\eta\left\|\nabla^{k+1}f\right\|_\sigma^2.
\end{aligned}
\end{equation}
\item [(ii).] If $k=N_0-1$, it holds that
\begin{equation}\label{Lemma4.3-2}
\begin{aligned}
&\frac{d}{dt}\left(\left\|\nabla^{N_0-1}f\right\|^2+\left\|\nabla^{N_0-1}[E,B]\right\|^2\right)+\frac1{\varepsilon^2}\left\|\nabla^{N_0-1}\{{\bf I-P}\}f\right\|_\sigma^2\\[2mm]
\lesssim&\mathcal{\bar{E}}_{N_0,N}(t)\left(\left\|\nabla^{N_0-1}[E,B]\right\|^2+\left\|\nabla^{N_0-1}{\bf P}f\right\|^2_\sigma\right)+\eta\left\|\nabla^{N_0-1}f\right\|_\sigma^2.
\end{aligned}
\end{equation}
\item[(iii).] For $k=N_0$, it follows that
\begin{equation}\label{Lemma4.3-3}
\begin{aligned}
&\frac{d}{dt}\left(\left\|\nabla^{N_0}f\right\|^2+\left\|\nabla^{N_0}[E,B]\right\|^2\right)
+\frac1{\varepsilon^2}\left\|\nabla^{N_0}\{{\bf I-P}\}f\right\|^2_{\sigma}\\[2mm]
\lesssim&\max\left\{\mathcal{E}_{N}(t),\mathcal{\bar{E}}_{N_0,N}(t)\right\}\left(\left\|\nabla^{N_0-1}[E,B]\right\|^2
+\left\|\nabla^{N_0-1}f\right\|_\sigma^2+\left\|\nabla^{N_0}f\right\|_\sigma^2\right)\\
&+\eta\left\|\nabla^{N_0}f\right\|_\sigma^2.
\end{aligned}
\end{equation}
\end{itemize}
\end{lemma}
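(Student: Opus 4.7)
\medskip

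\noindent\textbf{Proof proposal for Lemma \ref{lemma4.3}.}
The plan is to take the $L^2_{x,v}$ energy identity after applying $\nabla^k$ to the first equation of \eqref{VML-drop-eps}, paired together with the $L^2_x$ identities obtained by testing the $\nabla^k$ Maxwell system against $\nabla^k E$ and $\nabla^k B$. First, I would observe the crucial cancellation among the singular linear terms: the $-\tfrac{1}{\varepsilon}(E\cdot v)\sqrt M\,q_1$ contribution in the kinetic equation pairs with the source $-\tfrac{1}{\varepsilon}\int f\cdot q_1 v\sqrt M\,dv$ in Amp\`ere's law, and, together with $\nabla_x\times$ giving the $(E,B)$ cancellation, these produce the clean time-derivative $\tfrac{1}{2}\tfrac{d}{dt}(\|\nabla^k f\|^2+\|\nabla^k[E,B]\|^2)$. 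The coercivity of $\mathscr L$ then yields $\frac1{\varepsilon^2}(\mathscr L\nabla^k f,\nabla^k f)\gtrsim \frac1{\varepsilon^2}\|\nabla^k\{{\bf I-P}\}f\|_\sigma^2$, which gives the dissipative left-hand side in each of \eqref{Lemma4.3-1}--\eqref{Lemma4.3-3}.

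The bulk of the work is to estimate the right-hand side terms
\[
 \bigl(\tfrac12 q_0\nabla^k(E\!\cdot\! v f),\nabla^k f\bigr),\quad \bigl(q_0\nabla^k(E\!\cdot\!\nabla_v f),\nabla^k f\bigr),\quad \tfrac1{\varepsilon}\bigl(q_0\nabla^k((v\!\times\! B)\!\cdot\!\nabla_v f),\nabla^k f\bigr),\quad \tfrac1{\varepsilon}\bigl(\nabla^k\mathscr T(f,f),\nabla^k f\bigr).
\]
For the magnetic term, integration by parts in $v$ kills the piece where all derivatives fall on $f$, so only terms with at least one derivative on $B$ remain, which removes the $\varepsilon^{-1}$ singularity after applying a macro--micro decomposition: $(v\times B)\cdot\nabla_v{\bf P}f$ is annihilated against $\nabla^k{\bf P}f$ by the kernel structure (odd-in-$v$ integrand), so the surviving pieces always contain a factor of $\nabla^\beta B$ with $|\beta|\ge1$ or of $\{{\bf I-P}\}f$, which can be absorbed into $\bar{\mathcal E}_{N_0,N}(t)$ times the listed dissipations, up to $\eta\tfrac1{\varepsilon^2}\|\nabla^k\{{\bf I-P}\}f\|_\sigma^2$. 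For the nonlinear collision operator I will invoke Lemma \ref{Lemma2.1}.

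For the distribution-of-derivatives the three cases diverge.
In case (i) with $k\le N_0-2$, the excess regularity allows a clean $L^\infty_x$--$L^2_x$ split via $H^2_x\hookrightarrow L^\infty_x$: put $\|\nabla^{|\alpha_1|}(E,B,f)\|_{L^\infty_x}$ whenever $|\alpha_1|\le N_0-2$ and $L^2_x$ on the other factor, then bound by $\bar{\mathcal E}_{N_0,N}(t)\bigl(\|\nabla^{k+1}[E,B]\|^2+\|\nabla^{k+1}{\bf P}f\|^2\bigr)+\eta\|\nabla^{k+1}f\|_\sigma^2$, gaining one $x$-derivative via the interpolation $\|g\|\lesssim \|\nabla g\|^{1/2}\cdot(\cdots)$ or simply by noting that low $k$ leaves room for one more derivative.
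In case (ii) with $k=N_0-1$, the same splitting works but the ``$\sigma$'' norm on ${\bf P}f$ is needed once because we can no longer afford a velocity weight gain; in this case one resorts to $L^3_x$--$L^6_x$ on the top derivative and uses $\|\nabla^{N_0-1}{\bf P}f\|_\sigma$ as in the statement.
Case (iii) with $k=N_0$ is the main obstacle: the derivative count is saturated, and when all $N_0$ derivatives land on the ``bad'' factor one has no room for a Sobolev embedding. The fix is to exchange derivatives between factors and to control the top factor by the full $\mathcal E_N(t)$ norm (available because $N=2N_0$), which is where the $\max\{\mathcal E_N,\bar{\mathcal E}_{N_0,N}\}$ in \eqref{Lemma4.3-3} originates; the cost is the $\|\nabla^{N_0}f\|_\sigma^2$ term on the right, partially absorbed by $\eta$ and partially left on the right.

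Throughout I will keep the smallness parameter $\eta$ free so that the $\eta$-terms can later be absorbed by the left-hand side of the eventual combined inequality, and I will use {\bf Assumption 1} only to guarantee $\|E\|_{L^\infty_x}$ remains small so that the bilinear $(E\cdot v)f$ contribution stays subcritical. Summing up the three estimates over $|\alpha|=k$ and recording the boundary cases finishes the proof.
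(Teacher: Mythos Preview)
Your structural skeleton---the energy identity with the electromagnetic cancellation, the coercivity of $\mathscr L$, the macro--micro splitting that kills $(v\times B)\cdot\nabla_v{\bf P}f$ against ${\bf P}f$, and integrating by parts in $v$ on the diagonal magnetic piece---matches the paper exactly. The problem is the mechanism by which you propose to reach the precise right-hand side, namely $\bar{\mathcal E}_{N_0,N}(t)\bigl(\|\nabla^{k+1}[E,B]\|^2+\|\nabla^{k+1}{\bf P}f\|^2\bigr)+\eta\|\nabla^{k+1}f\|_\sigma^2$.

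Your plan of putting the lower-derivative factor into $L^\infty_x$ via $H^2_x\hookrightarrow L^\infty_x$ and the other two into $L^2_x$ leaves you with products like $\|\nabla^{\alpha_1}E\|_{L^\infty_x}\|\nabla^{k-\alpha_1}{\bf P}f\|\|\nabla^k{\bf P}f\|$, both macroscopic factors sitting at order $\le k$, not $k+1$; the vague appeal to ``gaining one $x$-derivative via interpolation $\|g\|\lesssim\|\nabla g\|^{1/2}(\cdots)$'' does not close because the residual low factor cannot be absorbed into a \emph{product} with $\bar{\mathcal E}_{N_0,N}$. The paper resolves this in two steps you omit: first, one integration by parts in $x$ on the macro terms so that $(\nabla^k(E\,{\bf P}f),\nabla^k{\bf P}f)=-(\nabla^{k-1}(E\,{\bf P}f),\nabla^{k+1}{\bf P}f)$, placing a genuine $\nabla^{k+1}$ on one side; second, and crucially, \emph{negative Sobolev interpolation} of the form
\[
\|\nabla^j E\|_{L^6_x}\lesssim\|\Lambda^{-1/2}E\|^{\frac{2k-2j}{2k+3}}\|\nabla^{k+1}E\|^{\frac{2j+3}{2k+3}},\qquad
\|\nabla^{k-1-j}(M^\delta f)\|_{L^3_x}\lesssim\|\Lambda^{-1/2}f\|^{\frac{2j+3}{2k+3}}\|\nabla^{k+1}f\|^{\frac{2k-2j}{2k+3}},
\]
whose exponents pair to give exactly $\|\Lambda^{-1/2}[f,E]\|^2\|\nabla^{k+1}[f,E]\|^2+\eta\|\nabla^{k+1}f\|^2$ after Young. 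This is precisely why $\bar{\mathcal E}_{N_0,N}$ is defined to contain $\|\Lambda^{-1/2}[f,E,B]\|^2$; without that endpoint the interpolation cannot land at $\nabla^{k+1}$. There is a companion gap on the micro--micro electric term: the factor $\langle v\rangle^{3/2}\nabla^{k-j}\{{\bf I-P}\}f$ cannot be put into $\|\cdot\|_\sigma$ directly, and the paper instead interpolates in the velocity weight,
\[
\|\langle v\rangle^{3/2}\nabla^{k-j}\{{\bf I-P}\}f\|\lesssim\|\langle v\rangle^{-1/2}\nabla^{k-j}\{{\bf I-P}\}f\|^{1-\theta}\|\langle v\rangle^{\hat l_1}\nabla^{k-j}\{{\bf I-P}\}f\|^{\theta},
\]
combined with negative Sobolev interpolation on $\nabla^j E$, so that the $\langle v\rangle^{\hat l_1}$ factor is absorbed by the weighted piece $\mathcal E_{N_0,\ell}$ inside $\bar{\mathcal E}_{N_0,N}$. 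Your proposal uses neither the $\Lambda^{-1/2}$ nor the weighted component of $\bar{\mathcal E}_{N_0,N}$, which is why the bookkeeping cannot close as written. Your heuristic for case~(iii) is sound, but the same interpolation ideas are still what make the details go through.
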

\begin{proof} For the case $k=0$, we have by multiplying $(\ref{VML-drop-eps})$ by $f$ and integrating the resulting identity with respect to $x$ and $v$ over $\mathbb{R}_x^3\times\mathbb{R}_v^3$ that
\begin{equation}\label{k-0}
\begin{aligned}
\frac{d}{dt}\left(\|f\|^2+\|[E,B]\|^2\right)+\frac1{\varepsilon^2}\|\{{\bf I-P}\}f\|^2_{\sigma}\lesssim|(v\cdot E f,f)|+\frac1{\varepsilon}|({\Gamma}(f,f),\{{\bf I-P}\}f)|
\end{aligned}
\end{equation}
For the first term on the right hand of (\ref{k-0}), applying the macro-micro decomposition yields that
\begin{eqnarray}
&&|(v\cdot E f,f)|\nonumber\\
&\lesssim&|(v\cdot E {\bf P}f,{\bf P}f)|+|(v\cdot E {\bf P}f,\{{\bf I-P}\}f)|\nonumber\\
&&+|(v\cdot E \{{\bf I-P}\}f,{\bf P}f)|+|(v\cdot E \{{\bf I-P}\}f,\{{\bf I-P}\}f)|\nonumber
\end{eqnarray}
where the first term on the above inequality can be bounded by
\begin{eqnarray}
 &&|(v\cdot E {\bf P}f,{\bf P}f)|\nonumber\\
 &\lesssim&\|E\|_{L^2_x}\left\|{\bf P}f\right\|_{L^3_xL^2_v}\left\|{\bf P}f\right\|_{L^6_xL^2_v}\nonumber\\
  &\lesssim&\left\|\Lambda^{-\frac12}E\right\|^{\frac 23}\left\|\nabla_xE\right\|^{\frac 13}
\left\|\Lambda^{-\frac12}{\bf P}f\right\|^{\frac 13}\left\|\nabla_x{\bf P}f\right\|^{\frac 23} \left\|\nabla_x{\bf P}f\right\|\nonumber\\
&\lesssim&\left\|\Lambda^{-\frac12}[{\bf P}f,E]\right\|^{2}\left\|\nabla_x[{\bf P}f,E]\right\|^{2}+
 \eta\left\|\nabla_x{\bf P}f\right\|,
\end{eqnarray}
the other three terms can be dominated by
\begin{eqnarray}
  &\lesssim&
\|E\|_{L^6_x}\left\|\langle v\rangle^{\frac{3}2}f\right\|_{L^3_xL^2_v}
\left\|\langle v\rangle^{-\frac12}\{{\bf I-P}\}f\right\|_{L^2_xL^2_v}\nonumber\\
&\lesssim&\|E\|_{L^2_vL^6_x}\left\|\langle v\rangle^{\frac{3}2}f\right\|_{L^2_vL^3_x}
\left\|\{{\bf I-P}\}f\right\|_\sigma\nonumber\\
&\lesssim&\|\nabla_xE\|\left\|\langle v\rangle^{\frac{3}2}f\right\|_{H^1_x}
\|\{{\bf I-P}\}f\|_\sigma\nonumber\\
&\lesssim&\mathcal{\bar{E}}_{2,2}(t)\left\|\nabla_xE\right\|^2
+\eta\left\|\{{\bf I-P}\}f\right\|^2_{\sigma}.
\end{eqnarray}
The last term on the right-hand side of \eqref{k-0} can be estimated as follows
\begin{equation*}
\begin{aligned}
\frac1{\varepsilon}|(\mathscr{T}(f,f),\{{\bf I-P}\}f)|\lesssim&\frac{\eta}{{\varepsilon^2}}\|\{{\bf I-P}\}f\|^2_{\sigma}
+C_\eta\left\|\left|{M}^\delta f\right|_{L^2_v}|f|_{L^2_\sigma}\right\|^2\\[2mm]
\lesssim&\frac{\eta}{{\varepsilon^2}}\|\{{\bf I-P}\}f\|^2_{\sigma}+C_\eta\left\|{M}^\delta f\right\|^2_{L^3_x}\|f\|^2_{L^6_\sigma}\\[2mm]
\lesssim&\frac{\eta}{{\varepsilon^2}}\|\{{\bf I-P}\}f\|^2_{\sigma}+C_\eta\left\|{M}^\delta f\right\|^2_{H^1_x}\|\nabla_xf\|^2_\sigma.
\end{aligned}
\end{equation*}
Collecting the above estimates gives
\begin{equation}\label{0}
\begin{aligned}
&\frac{d}{dt}\left(\|f\|^2+\|[E,B]\|^2\right)+\frac{1}{{\varepsilon^2}}\|\{{\bf I-P}\}f\|^2_{\sigma}\\
\lesssim&\mathcal{\bar{E}}_{2,2}(t)\left(\left\|\nabla_xE\right\|^2
+\left\|\nabla_x{\bf P}f\right\|^2\right)+\eta\|\nabla_x{\bf P}f\|^2,
\end{aligned}
\end{equation}
which gives (\ref{Lemma4.3-1}) with $k=0$.

For $k=1,2,\cdots,N_0-2$, we have by applying $\nabla^k$ to $(\ref{VML-drop-eps})$,
multiplying the resulting identity by $\nabla^kf$, and then integrating the final result with respect to $x$ and $v$ over $\mathbb{R}_x^3\times\mathbb{R}_v^3$ that
\begin{equation}\label{k}
\begin{aligned}
\frac{d}{dt}&\left(\left\|\nabla^kf\right\|^2+\left\|\nabla^k[E,B]\right\|^2\right)
+\frac{1}{{\varepsilon^2}}\left\|\nabla^k\{{\bf I-P}\}f\right\|^2_{\sigma}\\
\lesssim&{\left|\left(\nabla^k(v\cdot Ef),\nabla^kf\right)\right|}
+{\left|\left(\nabla^k(E\cdot\nabla_vf),\nabla^kf\right)\right|}\\
&+\frac1\varepsilon{\left|\left(\nabla^k\left((v\times B)\cdot\nabla_vf\right),\nabla^kf\right)\right|}
+\frac1\varepsilon{\left|\left(\nabla^k{\bf\Gamma}(f,f),\nabla^kf\right)\right|}.
\end{aligned}
\end{equation}
By applying the macro-micro decomposition \eqref{macro-micro}, one can deduce that the first term on the right-hand side of the above inequality can be bounded by
\begin{eqnarray}\label{k-E}
&\lesssim&\left|\left(\nabla^k(v\cdot E{\bf P}f),\nabla^k{\bf P}f\right)\right|
+\left|\left(\nabla^k(v\cdot E{\bf P}f),\nabla^k{\{\bf I-P\}}f\right)\right|\\
&&+\left|\left(\nabla^k(v\cdot E{\{\bf I-P\}}f),\nabla^k {\bf P}f\right)\right|+\left|\left(\nabla^k(v\cdot E\{{\bf I-P}\}f),\nabla^k{\{\bf I-P\}}f\right)\right|,\nonumber\\
\end{eqnarray}
where the first three terms on the right-hand side can be controlled by
\begin{eqnarray}
&\lesssim&\sum_{j\leq k-1}\left\|\left(\nabla^jE \nabla^{k-1-j}\left({M}^{\delta}f\right)\right)\right\|\left\|\nabla^{k+1}\left({M}^{\delta}f\right)\right\|\nonumber\\
&\lesssim&\sum_{j\leq k-1}\left\|\nabla^jE\right\|_{L^6_x}
\left\|\nabla^{k-1-j}\left({M}^{\delta}f\right)\right\|_{L^3_xL^2_v}\left\|\nabla^{k+1}\left({M}^{\delta}f\right)\right\|\nonumber\\
&\lesssim&\sum_{j\leq k-1}\left\|\Lambda^{-\frac 12}E\right\|^{\frac{2k-2j}{2k+3}} \left\|\nabla^{k+1}E\right\|^{\frac{2j+3}{2k+3}}
\left\|\Lambda^{-\frac 12}\left({M}^{\delta}f\right)\right\|^{\frac{2j+3}{2k+3}} \left\|\nabla^{k+1}\left({M}^{\delta}f\right)\right\|^{\frac{2k-2j}{2k+3}}
\left\|\nabla^{k+1}\left({M}^{\delta}f\right)\right\|\nonumber\\
&\lesssim&\left\|\Lambda^{-\frac12}[f,E]\right\|^{2}\left(\left\|\nabla^{k+1}E\right\|^2+\left\|\nabla^{k+1}f\right\|_\sigma^2\right)
+\eta\left\|\nabla^{k+1}f\right\|_\sigma^2.\nonumber
\end{eqnarray}
As for the last term, it holds from the Cauchy inequality and the Holder inequality that
\begin{eqnarray*}
&&\sum_{j\leq k}\left\|\nabla^jE  \nabla^{k-j}\{{\bf I-P}\}f\langle v\rangle^{\frac 32}\right\|^2\nonumber\\
&\lesssim&\sum_{j\leq k}\left\|\nabla^jE  \nabla^{k-j}\{{\bf I-P}\}f\langle v\rangle^{\frac 32}\right\|
\left\|\nabla^k\{{\bf I-P}\}f\langle v\rangle^{-\frac 1{2}}\right\|\\[2mm]
&\lesssim&\sum_{j\leq k}\left\|\nabla^jE  \nabla^{k-j}\{{\bf I-P}\}f\langle v\rangle^{\frac 32}\right\|^2+\eta\left\|\nabla^k\{{\bf I-P}\}f\right\|^2_\sigma.
\end{eqnarray*}
For the first term on the above equality, one has by employing interpolation method with respect to both spatial derivative and velocity weight $v$,
\begin{eqnarray}
  &&\sum_{j\leq k}\left\|\nabla^jE  \nabla^{k-j}\{{\bf I-P}\}f\langle v\rangle^{\frac 32}\right\|^2\nonumber\\
 &\lesssim&\left\|\nabla^kE\right\|_{L^6_x}^2\left\| \{{\bf I-P}\}f\langle v\rangle^{\frac 32}\right\|_{L^3_x}^2\nonumber\\
 &&+\sum_{0\leq j\leq k-1}\left\|\nabla^jE\right\|_{L^\infty_x}\left\|  \nabla^{k-j}\{{\bf I-P}\}f\langle v\rangle^{\frac 32}\right\|^2\nonumber\\
 &\lesssim&\left\| \{{\bf I-P}\}f\langle v\rangle^{\frac 32}\right\|_{H^1_x}^2\left\|\nabla^{k+1}E\right\|^2\nonumber\\
  &&+\sum_{0\leq j\leq k-1}\left(\left\|\Lambda^{-\frac12}E\right\|^{1-\theta_1}\left\|\nabla^{k+1}E\right\|^{\theta_1}
 \left\|\nabla^{k-j}\{{\bf I-P}\}f\langle v\rangle^{-\frac {1}2}\right\|^{1-\theta_1}
 \left\|\nabla^{k-j}\{{\bf I-P}\}f\langle v\rangle^{\hat{l}_1}\right\|^{\theta_1}\right)^2\nonumber\\
 &\lesssim&\left(\left\|\Lambda^{-\frac12}E\right\|^2+ \left\|\{{\bf I-P}\}f\langle v\rangle^{\hat{l}_1}\right\|_{H^k_x}^2\right)\left(\left\|\nabla^{k+1}E\right\|^2
+ \left\|\nabla^{k-j}\{{\bf I-P}\}f\langle v\rangle^{-\frac {1}2}\right\|^2\right)\nonumber\\
 &\lesssim&\mathcal{\bar{E}}_{N_0,N_0+\frac{\hat{l}_1}3}(t)\left(\left\|\nabla^{k+1}E\right\|^2+
 \left\|\{{\bf I-P}\}f\right\|_{H^k_xL^2_\sigma}^2\right).
\end{eqnarray}

Here $\theta_1=\frac{2j+4}{2k+3}$ and $\hat{l}_1$ satisfies that
\begin{eqnarray}
  \frac 32=-\frac12(1-\theta_1)+\hat{l}_1\theta_1
\end{eqnarray}
which yields that
\[\hat{l}_1=\frac{2k+3}{j+2}-\frac12,\]
without generality, we take $\hat{l}_1=N_0.$

Consequently, the first term on the right-hand side of \eqref{k} can be bounded by
\begin{eqnarray}
&&\left|\left(\nabla^k(v\cdot Ef),\nabla^kf\right)\right|\nonumber\\
&\lesssim&\mathcal{\bar{E}}_{N_0,N_0+\frac{\hat{l}_1}3}(t)\left(\left\|\nabla^{k+1}E\right\|^2+\left\|\nabla^{k+1}f\right\|_\sigma^2
+\left\|\{{\bf I-P}\}f\right\|_{H^k_xL^2_\sigma}^2\right)\nonumber\\
&&+\eta\left(\left\|\nabla^{k+1}f\right\|^2_\sigma+\left\|\nabla^k\{{\bf I-P}\}f\right\|^2_\sigma\right).\label{E-k-decay}
\end{eqnarray}
Similarly, integrating by part with respect to velocity variable $v$, one has
\begin{eqnarray*}
&&\left|\left(\nabla^k(E\cdot\nabla_vf),\nabla^kf\right)\right|=\left|\left(\nabla^k(E\cdot f),\nabla^k \nabla_vf\right)\right|\\
&\lesssim&\mathcal{\bar{E}}_{N_0,N_0+\frac{\hat{l}_1}3}(t)\left(\left\|\nabla^{k+1}E\right\|^2+\left\|\nabla^{k+1}f\right\|_\sigma^2
+\left\|\{{\bf I-P}\}f\right\|_{H^k_xL^2_\sigma}^2\right)\\
&&+\eta\left(\left\|\nabla^{k+1}f\right\|^2_\sigma+\left\|\nabla^k\{{\bf I-P}\}f\right\|^2_\sigma\right).
\end{eqnarray*}
By applying the macro-micro decomposition \eqref{macro-micro} and noting that
\[\left(\nabla^k\left(q_1(v\times B)\cdot\nabla_v{\bf P}f\right),\nabla^k{\bf P}f\right)=0,\]
one can deduce that by a little modification as the estimates on \eqref{E-k-decay}
\begin{eqnarray*}
  &&\frac1\varepsilon{\left|\left(\nabla^k\left({q_0}(v\times B)\cdot\nabla_vf\right),\nabla^kf\right)\right|}\\
  &\lesssim&\frac1\varepsilon{\left|\left(\nabla^k\left({q_0}(v\times B)\cdot\nabla_v{\bf P}f\right),\nabla^k{\bf P}f\right)\right|}+\frac1\varepsilon{\left|\left(\nabla^k\left({q_0}(v\times B)\cdot\nabla_v{\bf P}f\right),\nabla^k{\bf \{I-P\}}f\right)\right|}\\
  &&+\frac1\varepsilon{\left|\left(\nabla^k\left({q_0}(v\times B)\cdot\nabla_v{\bf\{I- P\}}f\right),\nabla^k{\bf P}f\right)\right|}+\frac1\varepsilon{\left|\left(\nabla^k\left({q_0}(v\times B)\cdot\nabla_v{\bf \{I-P\}}f\right),\nabla^k{\bf \{I-P\}}f\right)\right|}\\
  &\lesssim&\frac1\varepsilon{\left|\left(\nabla^k\left({q_0}(v\times B)\cdot\nabla_v{\bf P}f\right),\nabla^k{\bf \{I-P\}}f\right)\right|}+\frac1\varepsilon{\left|\left(\nabla^k\left({q_0}(v\times B)\cdot\nabla_v{\bf\{I- P\}}f\right),\nabla^k{\bf P}f\right)\right|}\\
  &&+\frac1\varepsilon{\left|\left(\nabla^k\left({q_0}(v\times B)\cdot\nabla_v{\bf \{I-P\}}f\right),\nabla^k{\bf \{I-P\}}f\right)\right|}\\
  &=&\frac1\varepsilon{\left|\left(\nabla^k\left({q_0}(v\times B)\cdot\nabla_v{\bf P}f\right),\nabla^k{\bf \{I-P\}}f\right)\right|}+\frac1\varepsilon{\left|\left(\nabla^k\left({q_0}(v\times B)\cdot\nabla_v{\bf\{I- P\}}f\right),\nabla^k{\bf P}f\right)\right|}\\
  &&+\frac1\varepsilon{\left|\left(\nabla^k\left({q_0}(v\times B)\cdot{\bf \{I-P\}}f\right),\nabla^k\nabla_v{\bf \{I-P\}}f\right)\right|}\\
  &\lesssim&\mathcal{\bar{E}}_{N_0,N_0+\frac{\hat{l}_1}3}(t)\left(\left\|\nabla^{k+1}B\right\|^2+\left\|\nabla^{k+1}f\right\|_\sigma^2
+\left\|\{{\bf I-P}\}f\right\|_{H^k_xL^2_\sigma}^2\right)\\
&&+\eta\left(\left\|\nabla^{k+1}{\bf P}f\right\|^2+\frac1{\varepsilon^2}\left\|\nabla^k\{{\bf I-P}\}f\right\|^2_\sigma\right).
\end{eqnarray*}
For the last term on the right-hand side of (\ref{k}), it follows from Lemma \ref{lemma2.2} that
\begin{equation}\label{Gamma-k}
\begin{aligned}
&\frac1\varepsilon{\left|\left(\nabla^k\mathscr{T}(f,f),\nabla^kf\right)\right|}
=\frac1\varepsilon{\left|\left(\nabla^k\mathscr{T}(f,f),\nabla^k\{{\bf I-P}\}f\right)\right|}\\
\lesssim& \frac1\varepsilon\sum_{j\leq k}\left\|\left|\nabla^j{M}^\delta f\right|_{L^2_v}\left|\nabla^{k-j}f\right|_{L^2_\sigma}\right\|\left\|\nabla^k\{{\bf I-P}\}f\right\|_\sigma\\[2mm]
\lesssim&\frac1\varepsilon\sum_{j\leq k}\left\|{M}^\delta\nabla^j f\right\|_{L^3_xL^2_v}\left\|\nabla^{k-j}f\right\|_{L^6_xL^2_\sigma} \left\|\nabla^k\{{\bf I-P}\}f\right\|_\sigma\\[2mm]
\lesssim&\frac1\varepsilon\sum_{j\leq k}\left\|{M}^\delta\nabla^j f\right\|_{L^2_vL^3_x}\left\|\nabla^{k-j}f\right\|_{L^2_\sigma L^6_x}\left\|\nabla^k\{{\bf I-P}\}f\right\|_\sigma\\[2mm]
\lesssim&\frac1\varepsilon\sum_{j\leq k}\left\|{M}^\delta \nabla^mf\right\|^{\frac{j+1}{k+1}}\left\|{M}^\delta\nabla^{k+1} f\right\|^{1-\frac{j+1}{k+1}}\| f\|_\sigma^{1-\frac{j+1}{k+1}}\left\| \nabla^{k+1}f\right\|_\sigma^{\frac{j+1}{k+1}}\left\|\nabla^k\{{\bf I-P}\}f\right\|_\sigma\\[2mm]
\lesssim&\max\left\{\mathcal{E}_{N_0-2}(t), \mathcal{E}_{1,1}(t)\right\} \left\|\nabla^{k+1} f\right\|_\sigma^2 +\frac\eta{{\varepsilon^2}}\left\|\nabla^k\{{\bf I-P}\}f\right\|_\sigma^2.
\end{aligned}
\end{equation}
Here $m=\frac{k+1}{2(j+1)}\leq\frac{k+1}{2}\leq \frac N2.$

By collecting the above related estimates, one can deduce (\ref{Lemma4.3-1}) immediately.
\eqref{Lemma4.3-2} and \eqref{Lemma4.3-2} can be obtained by a similar way as (\ref{Lemma4.3-1}).
\end{proof}

\begin{lemma}\label{Lemma4.4}

\begin{itemize} For the macro dissipation estimates on $f(t,x,v)$, we have the following results:
\item[(i).] For $k=0,1,2\cdots,N_0-2$, there exist interactive energy functionals $G^k_f(t)$ satisfying
\[
G^k_f(t)\lesssim \left\|\nabla^k[f,E,B]\right\|^2+\left\|\nabla^{k+1}[f,E,B]\right\|^2+\left\|\nabla^{k+2}E\right\|^2
\]
such that
\begin{eqnarray}\label{Lemma3.4-1}
&&\frac{d}{dt}G^k_f(t)+\left\|\nabla^k[E,\rho_+-\rho_-]\right\|_{H^1}^2+\left\|\nabla^{k+1}[{\bf P}f,B])\right\|^2\nonumber\\
&\lesssim&\left\{\left\|\Lambda^{-\frac12}[f,E,B]\right\|^2+\mathcal{E}_{N}(t)\right\}\left(\left\|\nabla^{k+1}[E,B]\right\|^2+\left\|\nabla^{k+1}f\right\|^2_\sigma\right)
+\frac1{{\varepsilon^2}}\left\|\nabla^k\{{\bf I-P}\}f\right\|^2_\sigma\nonumber\\
&&
+\frac1{{\varepsilon^2}}\left\|\nabla^{k+1}\{{\bf I-P}\}f\right\|^2_\sigma
+\frac1{{\varepsilon^2}}\left\|\nabla^{k+2}\{{\bf I-P}\}f\right\|^2_\sigma;
\end{eqnarray}
\item[(ii).] For $k=N_0-1$, there exists an interactive energy functional $G^{N_0-1}_f(t)$ satisfying
$$
G^{N_0-1}_f(t)\lesssim\left\|\nabla^{N_0-2}[f,E,B]\right\|^2+\left\|\nabla^{N_0-1}[f,E,B]\right\|^2
+\left\|\nabla^{N_0}[f,E]\right\|^2
$$
such that
\begin{eqnarray}\label{Lemma3.4-2}
&&\frac{d}{dt}G^{N_0-1}_f(t) +\left\|\nabla^{N_0-2}[E,\rho_+-\rho_-]\right\|_{H^1}^2+\left\|\nabla^{N_0-1}B
\right\|^2+\left\|\nabla^{N_0}{\bf P}f\right\|^2\nonumber\\
&\lesssim&\left\{\left\|\Lambda^{-\frac12}[f,E,B]\right\|^2+\mathcal{E}_{N}(t)\right\}\left(\left\|\nabla^{N_0-1}[E,B]\right\|^2
+\left\|\nabla^{N_0-1}f\right\|^2_\sigma\right)+\frac1{{\varepsilon^2}}\left\|\nabla^{N_0-2}\{{\bf I-P}\}f\right\|^2_\sigma\nonumber\\
&&+\frac1{{\varepsilon^2}}\left\|\nabla^{N_0-1}\{{\bf I-P}\}f\right\|^2_\sigma
+\frac1{{\varepsilon^2}}\left\|\nabla^{N_0}\{{\bf I-P}\}f\right\|^2_\sigma.
\end{eqnarray}
\end{itemize}
\end{lemma}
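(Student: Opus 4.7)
The plan is to follow the architecture of Lemma \ref{mac-dissipation} but at the level of $\nabla^k$ derivatives, taking care that every nonlinear remainder can be absorbed either into the ``low-frequency'' factor $\|\Lambda^{-\frac12}[f,E,B]\|^2$ or into $\mathcal{E}_N(t)$, so that the surviving dissipation lives exactly at level $\nabla^{k+1}$. We start from the fluid-type decompositions \eqref{Macro-equation1}, \eqref{Micro-equation1}, \eqref{Micro-equation2} and \eqref{a_+-a_--original}, apply $\nabla^k$ to each of them, and test against the appropriate macroscopic quantity. Concretely: (a) testing $\nabla^k(\ref{Micro-equation1})_2$ against $\varepsilon\nabla^k\partial_j\theta$ and using the third equation of \eqref{Macro-equation1} to dispose of the time derivative yields a functional $G^k_\theta(t)\sim(\nabla^k\mathcal{B}(\{{\bf I-P}\}f\cdot[1,1]),\varepsilon\nabla^{k+1}\theta)$ producing $\|\nabla^{k+1}\theta\|^2$; (b) testing $\nabla^k\eqref{Micro-equation2}$ against $\varepsilon\nabla^k u_i$ and substituting $(\ref{Macro-equation1})_2$ for $\partial_t u_i$ gives $G^k_u(t)$ producing $\|\nabla^{k+1}u\|^2+\|\nabla^k\operatorname{div}u\|^2$; (c) testing $\nabla^k(\ref{Macro-equation1})_2$ against $\varepsilon\nabla^{k+1}(\rho_++\rho_-)$ provides $G^k_\rho(t)$ with dissipation $\|\nabla^{k+1}(\rho_++\rho_-)\|^2$; (d) testing $\nabla^k\eqref{a_+-a_--original}_2$ against $\varepsilon\nabla^{k+1}(\rho_+-\rho_-)$ and using $\eqref{a_+-a_--original}_1$ to rewrite $\partial_t(\rho_+-\rho_-)=-\frac1\varepsilon\nabla_x\cdot G$ delivers $\|\nabla^{k+1}(\rho_+-\rho_-)\|^2+\|\nabla^k(\rho_+-\rho_-)\|^2$; (e) solving for $\frac{2}{\varepsilon}E$ in $\eqref{a_+-a_--original}_2$ and testing against $\varepsilon\nabla^k E$, with $\partial_tG$ absorbed by $\partial_t E=\nabla\times B-G/\varepsilon$ from \eqref{VML-drop-eps}, yields $\|\nabla^k E\|^2$; (f) testing $\nabla^k(\partial_t E-\nabla\times B)=-\nabla^k G/\varepsilon$ against $\nabla^k\nabla\times B$ and invoking $\partial_t B=-\nabla\times E$ produces $\|\nabla^{k+1}B\|^2$.

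A suitable weighted linear combination of (a)--(f), with small coefficients chosen so that the coupling cross-terms are consumed by the positive contributions above, defines the interactive functional $G^k_f(t)$. The sizes of its ingredients are dictated exactly by the tested weights, so the bound $G^k_f(t)\lesssim\|\nabla^k[f,E,B]\|^2+\|\nabla^{k+1}[f,E,B]\|^2+\|\nabla^{k+2}E\|^2$ asserted in the statement follows from Cauchy--Schwarz on each piece.

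The main obstacle is estimating the nonlinear source terms $g_\pm$ and $\mathscr{T}(f,f)$ in \eqref{Macro-equation}--\eqref{Micro-equation} in a way that recovers the decay-friendly right-hand side: one must land on $\{\|\Lambda^{-\frac12}[f,E,B]\|^2+\mathcal{E}_N(t)\}\cdot(\|\nabla^{k+1}[E,B]\|^2+\|\nabla^{k+1}f\|_\sigma^2)$ plus $\frac{1}{\varepsilon^2}\|\nabla^{k,k+1,k+2}\{{\bf I-P}\}f\|_\sigma^2$, rather than on the bare product $\mathcal{E}_N(t)\mathcal{D}_N(t)$ used in Lemma \ref{mac-dissipation}. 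For the quadratic electric-field terms $E(\rho_++\rho_-)$, $E\cdot\nabla_v f$, $(v\cdot E)f$, one splits the product by Leibniz, estimates the top-order factor in $L^2$ and the low-order factor in $L^\infty$ (via $H^2\hookrightarrow L^\infty$), and then applies the Gagliardo--Nirenberg--Sobolev interpolation with negative Sobolev index, exactly as in Lemma \ref{lemma4.3}, writing $\|\nabla^j g\|_{L^3}\lesssim\|\Lambda^{-\frac12}g\|^{\theta}\|\nabla^{k+1}g\|^{1-\theta}$ with the appropriate $\theta$. The magnetic-term $u\times B$ and $G\times B$ contributions are treated analogously, using $\div B=0$ where it helps. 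For the collision bilinear form $\mathscr{T}(f,f)$ we invoke Lemma \ref{lemma2.2} combined with the microscopic $\|\cdot\|_\sigma$ dissipation to produce the $\frac{1}{\varepsilon^2}\|\nabla^{k+2}\{{\bf I-P}\}f\|_\sigma^2$ factor, which is the source of the apparent loss of two derivatives on the dissipation of the microscopic component in \eqref{Lemma3.4-1}.

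For part (ii), i.e.\ $k=N_0-1$, we cannot test against $\nabla^{N_0+1}$-quantities because that exceeds the available regularity; as a consequence we lose one derivative on $E$ and on $B$ in both the interactive energy and in the dissipation. We compensate by discarding the step that would have yielded $\|\nabla^{N_0+1}B\|^2$, keeping only $\|\nabla^{N_0-1}B\|^2$ from a version of (f) at level $k=N_0-2$, and by testing (e) only up to $\nabla^{N_0-2}E$. The macroscopic controls for $\rho_\pm$, $u$, $\theta$ are still pushed up to the top level $\nabla^{N_0}{\bf P}f$ because those rely on (b)--(d) that consume only one extra derivative through the $\varepsilon\cdot\partial_t$ manipulation. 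Assembling these variants produces the functional $G^{N_0-1}_f(t)$ bounded by $\|\nabla^{N_0-2}[f,E,B]\|^2+\|\nabla^{N_0-1}[f,E,B]\|^2+\|\nabla^{N_0}[f,E]\|^2$ and the dissipation $\|\nabla^{N_0-2}[E,\rho_+-\rho_-]\|_{H^1}^2+\|\nabla^{N_0-1}B\|^2+\|\nabla^{N_0}{\bf P}f\|^2$ required by \eqref{Lemma3.4-2}. The hardest point throughout is bookkeeping: ensuring each nonlinear piece factors as \emph{(a quantity controlled by $\|\Lambda^{-1/2}\|$ or by $\mathcal{E}_N$)} $\times$ \emph{(a dissipative quantity at level $k+1$)}, since only such a structure will combine with Lemma \ref{lemma4.3} in the subsequent negative-Sobolev interpolation argument of Section 5 to yield the claimed polynomial-in-$t$ decay.
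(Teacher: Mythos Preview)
Your proposal is correct and matches the paper's approach exactly: the paper's own proof consists of a single sentence stating that the lemma is obtained by repeating the construction of Lemma~\ref{mac-dissipation} with the nonlinear terms handled as in Lemma~\ref{lemma4.3}, and it omits all details. Your write-up is in fact a faithful and considerably more explicit fleshing-out of that one sentence---the six building blocks (a)--(f), the weighted linear combination, and the replacement of the crude $\mathcal{E}_N\mathcal{D}_N$ bound by the interpolation $\|\nabla^j g\|_{L^3}\lesssim\|\Lambda^{-1/2}g\|^{\theta}\|\nabla^{k+1}g\|^{1-\theta}$ are precisely what the paper has in mind. One small slip: when you refer to ``Lemma~\ref{lemma2.2}'' for the bilinear estimate on $\mathscr{T}(f,f)$ you mean Lemma~\ref{Lemma2.1}; in this paper Lemma~\ref{lemma2.2} is the Sobolev interpolation inequality.
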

\begin{proof}
Applying the similar way as Lemma \ref{lemma4.3} and Lemma \ref{mac-dissipation}, we can prove this lemma, we omit the proof for brevity.
\end{proof}
{\bf Assumption 2:}
\[\mathcal{\bar{E}}_{N_0,l}(t)\leq M_2,\]
where $M_2$ is a sufficiently small positive constant.
\begin{proposition}\label{Lemma4.5}
Under {\bf Assumption 1} and {\bf Assumption 2}, assume $N_0\geq 3$, $N=2N_0$, $\hat{l}_1\geq N_0$, $l\geq N+\hat{l}_1$,
there exist an energy functional $\mathcal{E}^k_{N_0}(t)$ and the corresponding energy dissipation rate functional $\mathcal{D}^k_{N_0}(t)$ satisfying \eqref{E-k} and \eqref{D-k} respectively such that
\begin{equation}\label{Lemma4.5-1}
\frac{d}{dt}\mathcal{E}^k_{N_0}(t)+\mathcal{D}^k_{N_0}(t)\leq 0
\end{equation}
{\color{red}holds for $k=0,1,2,\cdots, N_0-2$ and all $0\leq t\leq T.$}

Furthermore, we can get that
\begin{equation}\label{Lemma4.5-2}
\mathcal{E}^k_{N_0}(t)\lesssim\sup_{0\leq \tau\leq t}\left\{\mathcal{\bar{E}}_{N_0,\frac{k+\frac12}{2}}(\tau),\mathcal{E}_{N_0+k+\frac12}(\tau)\right\}(1+t)^{-(k+\frac12)},\quad 0\leq t\leq T.
\end{equation}
\end{proposition}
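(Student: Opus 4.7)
The plan is a two-step energy--interpolation argument: first derive the Lyapunov inequality \eqref{Lemma4.5-1} by assembling the spatial-derivative estimates of Lemma \ref{lemma4.3} with the macroscopic gain of Lemma \ref{Lemma4.4}; then convert it into the polynomial decay \eqref{Lemma4.5-2} via an interpolation against the negative Sobolev norm $\|\Lambda^{-1/2}[f,E,B]\|$ that is kept bounded by $\mathcal{\bar{E}}_{N_0,l}(t)\le M_2$ under Assumption 2.

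For part (i), I would sum the estimates of Lemma \ref{lemma4.3} over $|\alpha|=k, k+1, \ldots, N_0$ (using the $k=N_0-1$ version \eqref{Lemma4.3-2} at the intermediate order and \eqref{Lemma4.3-3} at the top order) and add a small multiple $\kappa>0$ of Lemma \ref{Lemma4.4} summed over $|\alpha|=k,\ldots, N_0-1$. Setting
\begin{equation*}
\mathcal{E}^k_{N_0}(t) \;=\; \sum_{|\alpha|=k}^{N_0} \|\partial^\alpha [f,E,B]\|^2 \;+\; \kappa \sum_{|\alpha|=k}^{N_0-1} G^{|\alpha|}_f(t),
\end{equation*}
the pointwise bound on the interactive functionals guarantees equivalence with the energy \eqref{E-k}. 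Thanks to Assumptions 1 and 2, the factors $\mathcal{\bar{E}}_{N_0,N}(t)$, $\mathcal{E}_N(t)$, and $\|\Lambda^{-\frac12}[f,E,B]\|^2$ standing in front of the error terms in Lemmas \ref{lemma4.3}--\ref{Lemma4.4} are uniformly bounded by $\max\{M_1,M_2\}\ll 1$; together with taking $\eta$ and $\kappa$ small enough, these errors can be absorbed into the microscopic dissipation $\tfrac{1}{\varepsilon^2}\sum_{|\alpha|=k}^{N_0}\|\partial^\alpha\{{\bf I-P}\}f\|_\sigma^2$ and into the macroscopic dissipation recovered from $G^{|\alpha|}_f$. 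What remains assembles precisely $\mathcal{D}^k_{N_0}(t)$ of \eqref{D-k}, yielding \eqref{Lemma4.5-1}.

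For part (ii), I would invoke the homogeneous Sobolev interpolation
\begin{equation*}
\|\nabla^j g\| \;\lesssim\; \|\Lambda^{-\tfrac12} g\|^{\theta_j}\,\|\nabla^{j+1} g\|^{1-\theta_j}, \qquad \theta_j = \tfrac{2}{2j+3},
\end{equation*}
applied to $E$, $B$ and (after pairing with the algebraic velocity weight, which is exactly why the hypothesis $l\ge N+\hat{l}_1$ with $\hat{l}_1\ge N_0$ is needed) to the macroscopic coefficients $\rho_\pm,u,\theta$ and to $\{{\bf I-P}\}f$. Substituting and using that $\|\Lambda^{-1/2}[f,E,B]\|$ is dominated by $\mathcal{\bar{E}}_{N_0,l}(t)$, one obtains the coercivity
\begin{equation*}
\mathcal{E}^k_{N_0}(t) \;\lesssim\; \bigl(\mathcal{D}^k_{N_0}(t)\bigr)^{1-\theta_k}\cdot \mathcal{M}^{\theta_k},\qquad \mathcal{M}:=\sup_{0\le \tau\le t}\max\bigl\{\mathcal{\bar{E}}_{N_0,\frac{k+\frac12}{2}}(\tau),\mathcal{E}_{N_0+k+\frac12}(\tau)\bigr\},
\end{equation*}
which, combined with \eqref{Lemma4.5-1}, yields the nonlinear ODE
\begin{equation*}
\frac{d}{dt}\mathcal{E}^k_{N_0}(t) \;+\; c\,\mathcal{M}^{-\frac{2}{2k+1}}\bigl(\mathcal{E}^k_{N_0}(t)\bigr)^{1+\frac{2}{2k+1}} \;\le\; 0.
\end{equation*}
A direct integration of this Bernoulli-type inequality produces the decay rate \eqref{Lemma4.5-2}.

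The main obstacle is the bookkeeping in the interpolation step: one has to verify that the velocity weight appearing on the right-hand side of the Gagliardo--Nirenberg-type inequality is exactly controlled by the weighted energy $\mathcal{E}_{N_0,l}(t)$ entering $\mathcal{\bar{E}}_{N_0,l}(t)$, and that the highest-order $j=N_0$ component, where $\|\nabla^{N_0}[E,B]\|$ is not present on the right-hand sides of Lemma \ref{Lemma4.4}, can nevertheless be absorbed directly through the energy functional. A secondary difficulty is tracking the fact that at $k=N_0$ the error coefficient from Lemma \ref{lemma4.3} involves $\mathcal{E}_N(t)$ rather than $\mathcal{\bar{E}}_{N_0,N}(t)$; both are controlled by the overall smallness, but the matching of weights with the assumption $l\ge N+\hat{l}_1$ is delicate and is the technical heart of the argument.
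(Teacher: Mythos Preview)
Your overall strategy for both parts matches the paper's: part (i) is obtained by a suitable linear combination of Lemmas \ref{lemma4.3} and \ref{Lemma4.4}, and part (ii) by an interpolation argument followed by integrating a Bernoulli-type differential inequality. Your exponents $\theta_k=\tfrac{2}{2k+3}$ and $1+\tfrac{2}{2k+1}$ agree with the paper's $\tfrac{1}{k+3/2}$ and $1+\tfrac{1}{k+1/2}$.

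There is, however, a genuine gap in your treatment of the top-order field term $\|\nabla^{N_0}[E,B]\|^2$. You correctly spot that this term sits in $\mathcal{E}^k_{N_0}(t)$ but is absent from $\mathcal{D}^k_{N_0}(t)$, yet your proposed remedy (``absorbed directly through the energy functional'') cannot work: bounding a piece of the energy by the energy itself gives no decay. Your stated interpolation $\|\nabla^j g\|\lesssim\|\Lambda^{-1/2}g\|^{\theta_j}\|\nabla^{j+1}g\|^{1-\theta_j}$ also fails here, since $\|\nabla^{N_0+1}[E,B]\|$ is not available in the dissipation either. The paper resolves this by interpolating in the \emph{opposite direction},
\[
\|\nabla^{N_0}[E,B]\|\lesssim \|\nabla^{N_0-1}[E,B]\|^{\frac{k+1/2}{k+3/2}}\,\|\nabla^{N_0+k+1/2}[E,B]\|^{\frac{1}{k+3/2}},
\]
so that the ``good'' factor $\|\nabla^{N_0-1}[E,B]\|$ lands in $\mathcal{D}^k_{N_0}(t)$ while the ``bad'' factor is controlled by the higher-order energy $\mathcal{E}_{N_0+k+1/2}$. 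This is precisely why $\mathcal{E}_{N_0+k+1/2}(\tau)$ appears inside the supremum in \eqref{Lemma4.5-2}; without this step that quantity has no role in your argument, even though you copied it into the definition of $\mathcal{M}$.

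A smaller point: for the microscopic piece $\|\partial^\alpha\{{\bf I-P}\}f\|^2$ (which must be compared to $\|\partial^\alpha\{{\bf I-P}\}f\|_\sigma^2\sim\|\langle v\rangle^{-1/2}\partial^\alpha\{{\bf I-P}\}f\|^2$ plus derivative pieces), the paper interpolates in the velocity weight at the \emph{same} spatial order $m$, between $\langle v\rangle^{-1/2}$ and a positive power of $\langle v\rangle$ controlled by the weighted energy $\mathcal{\bar{E}}_{N_0,\cdot}$, rather than using $\Lambda^{-1/2}$ in $x$. You allude to this, but the formula you wrote does not actually cover it; make sure the velocity-weight interpolation is written out explicitly so the reader sees where the condition $l\ge N+\hat l_1$ enters.
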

\begin{proof} From Lemma \ref{lemma4.3} and \ref{Lemma4.4}, we can obtain by a suitable linear combination of the corresponding estimates obtained there that there exist an energy functional $\mathcal{E}^k_{N_0}(t)$ and the corresponding energy dissipation rate functional $\mathcal{D}^k_{N_0}(t)$ satisfying \eqref{E-k} and \eqref{D-k} respectively such that
\begin{equation*}
\frac{d}{dt}\mathcal{E}^k_{N_0}(t)+\mathcal{D}^k_{N_0}(t)\leq 0
\end{equation*}
{\color{red}holds for all $0\leq t\leq T$.}

For this purpose, we first deduce from Lemma \ref{lemma2.2} and Corollary \ref{corrollary} that
\begin{equation*}
\begin{aligned}
\left\|\nabla^k[{\bf P}f,E,B]\right\|\leq \left\|\nabla^{k+1}[{\bf P}f,E,B]\right\|^{\frac{k+\frac12}{k+\frac32}}
\left\|\Lambda^{-\frac12}[{\bf P}f,E,B]\right\|^{\frac{1}{k+\frac32}}.
\end{aligned}
\end{equation*}
The above inequality together with the facts that
\begin{eqnarray}
\left\|\nabla^m{\bf\{I-P\}}f\right\|&\leq& \left\|\langle v\rangle^{-\frac12}\nabla^m{\bf\{I-P\}}f\right\|^{\frac{k+\frac12}{k+\frac32}}
\left\|\langle v\rangle^{-\frac{(\gamma+2){k+\frac12}}{2}}\nabla^m{\bf\{I-P\}}f\right\|^{\frac{1}{k+\frac32}},\nonumber\\
\left\|\nabla^{N_0}[E,B]\right\|&\lesssim&\left\|\nabla^{N_0-1}[E,B]\right\|^\frac{k+\frac12}{k+\frac32}
\left\|\nabla^{N_0+k+\frac12}[E,B]\right\|^\frac{1}{k+\frac32}
\end{eqnarray}
imply
\begin{equation*}
\begin{aligned}
\mathcal{E}^k_{N_0}(t)\leq \left(\mathcal{D}^k_{N_0}(t)\right)^\frac{k+\frac12}{k+\frac32}\left\{\sup_{0\leq \tau\leq t}\left\{\mathcal{\bar{E}}_{N_0,\frac{k+\frac12}{2}}(\tau),\mathcal{E}_{N_0+k+\frac12}(\tau)\right\}\right\}^\frac{1}{k+\frac32}.
\end{aligned}
\end{equation*}
Hence, we deduce that
\begin{equation*}
\frac{d}{dt}\mathcal{E}^k_{N_0}(t)+\left\{\sup_{0\leq \tau\leq t}\left\{\mathcal{\bar{E}}_{N_0,\frac{k+\frac12}{2}}(\tau),\mathcal{E}_{N_0+k+\frac12}(\tau)\right\}\right\}^{-\frac{1}{k+\frac12}}
\left\{\mathcal{E}^k_{N_0}(t)\right\}^{1+\frac{1}{k+\frac12}}\leq 0
\end{equation*}
and we can get by solving the above inequality directly that
\begin{equation*}
\mathcal{E}^k_{N_0}(t)\lesssim\sup_{0\leq \tau\leq t}\left\{\mathcal{\bar{E}}_{N_0,\frac{k+\frac12}{2}}(\tau),\mathcal{E}_{N_0+k+\frac12}(\tau)\right\}(1+t)^{-{k+\frac12}}.
\end{equation*}
This completes the proof of Lemma \ref{Lemma4.5}.
\end{proof}

\section{The estimates on the negative Sobolev space}
To ensure {\bf Assumption} 2, this section is devoted into
bound on $\mathcal{\bar{E}}_{N_0,l}(t)$, especially the bound on $\left\|\Lambda^{-\frac12}[f,E,B](t)\right\|$.

The first one is on the estimate on $\|[f,E,B](t)\|_{\dot{H}^{-s}}$.
\begin{lemma}\label{Lemma4.1} Under the assumptions stated above, we have for $s\in[\frac12,\frac32)$ that
\begin{eqnarray}\label{Lemma4.1-1}
&&\frac{d}{dt}\left(\left\|\Lambda^{-\frac12}f\right\|^2+\left\|\Lambda^{-\frac12}[E,B]\right\|^2\right)+\frac1{\varepsilon^2}\left\|\Lambda^{-\frac12}\{{\bf I-P}\}f\right\|_{\sigma}^2\nonumber\\
&\lesssim&\left\|\Lambda^{-\frac12}f\right\|\left(\left\|\Lambda^{\frac12}E\right\|^2+\left\|\Lambda^{\frac12}f\right\|^2_{\sigma}\right)\nonumber\\
&&+\bar{\mathcal{E}}_{N_0,N_0}(t)\left(\left\|\Lambda^{\frac12}E\right\|^2+\left\|\nabla_xf\right\|^2_{\sigma}
+\left\|\Lambda^{\frac12}B\right\|^2+\frac1{\varepsilon^2}\left\|\Lambda^{\frac12}{\bf\{I-P\}}f\right\|^2_{\sigma}\right).
\end{eqnarray}
\end{lemma}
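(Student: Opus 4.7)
The plan is to derive an energy identity in the negative Sobolev norm by applying $\Lambda^{-1/2}$ to the system \eqref{VML-drop-eps} and then pairing the $f$-equation with $\Lambda^{-1/2}f$ in $L^2_{x,v}$, and the two Maxwell equations with $\Lambda^{-1/2}E$ and $\Lambda^{-1/2}B$ respectively in $L^2_x$. Since $\Lambda^{-1/2}$ acts only in $x$, it commutes with multiplication by $v$, with $\nabla_v$, and with $\mathscr{L}$, so the structural identities from the $|\alpha|=0$ estimate in Lemma \ref{lemma3.7} transfer directly. Two cancellations are essential: first, $(\Lambda^{-1/2}(v\cdot\nabla_x f),\Lambda^{-1/2}f)=0$ by skew-symmetry of the streaming term; second, the pairing of $-\tfrac{1}{\varepsilon}(E\cdot v)\sqrt{M}q_1$ against $\Lambda^{-1/2}f$ exactly cancels the current-source contribution $-\tfrac{1}{\varepsilon}\int f\cdot q_1 v\sqrt{M}\,dv$ in the $E$-equation tested against $\Lambda^{-1/2}E$, while the usual $\nabla_x\times$ coupling between the $E$ and $B$ equations vanishes. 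This yields $\tfrac12\tfrac{d}{dt}(\|\Lambda^{-1/2}f\|^2+\|\Lambda^{-1/2}[E,B]\|^2)$ on the left, and the coercivity of $\mathscr{L}$ produces the dissipation $\tfrac{1}{\varepsilon^2}\|\Lambda^{-1/2}\{\mathbf{I-P}\}f\|_\sigma^2$.

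The remaining task is to bound four nonlinear contributions: (i) $(\Lambda^{-1/2}(\tfrac12 q_0(E\cdot v)f),\Lambda^{-1/2}f)$, (ii) $(\Lambda^{-1/2}(q_0 E\cdot\nabla_v f),\Lambda^{-1/2}f)$, (iii) $\tfrac{1}{\varepsilon}(\Lambda^{-1/2}(q_0(v\times B)\cdot\nabla_v f),\Lambda^{-1/2}f)$, and (iv) $\tfrac{1}{\varepsilon}(\Lambda^{-1/2}\mathscr{T}(f,f),\Lambda^{-1/2}f)$. The basic tool is the Hardy--Littlewood--Sobolev inequality $\|\Lambda^{-1/2}(gh)\|_{L^2_x}\lesssim\|g\|_{L^{p_1}_x}\|h\|_{L^{p_2}_x}$ with $\tfrac{1}{p_1}+\tfrac{1}{p_2}=\tfrac{2}{3}$, used in tandem with the Sobolev embeddings $\dot H^{1/2}(\mathbb{R}^3)\hookrightarrow L^3$ and $\dot H^{1}(\mathbb{R}^3)\hookrightarrow L^6$. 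After macro-micro decomposition, the pure $\mathbf{P}f$--$\mathbf{P}f$ piece of (i) produces the factor $\|\Lambda^{-1/2}f\|(\|\Lambda^{1/2}E\|^2+\|\Lambda^{1/2}f\|_\sigma^2)$ appearing on the right-hand side of \eqref{Lemma4.1-1}, while the mixed and micro-micro contributions of (i), (ii), and (iv) produce the $\bar{\mathcal{E}}_{N_0,N_0}(t)$ prefactor multiplying $\|\Lambda^{1/2}E\|^2+\|\nabla_x f\|_\sigma^2$. The nonlinear collision (iv) is handled precisely as in the step leading to \eqref{Gamma-k}, combining Lemma \ref{lemma2.2} with HLS.

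The principal obstacle is term (iii), which carries the singular prefactor $\tfrac{1}{\varepsilon}$. Splitting $f=\mathbf{P}f+\{\mathbf{I-P}\}f$, the purely macroscopic piece $(v\times B)\cdot\nabla_v\mathbf{P}f$ tested against $\Lambda^{-1/2}\mathbf{P}f$ vanishes by the same odd-in-$v$ parity argument used in Lemmas \ref{lemma3.7} and \ref{lemma3.8}; the remaining mixed and micro-micro pieces are treated by Cauchy--Schwarz so that one factor of $\tfrac{1}{\varepsilon}$ is absorbed into the dissipation $\tfrac{1}{\varepsilon^2}\|\Lambda^{1/2}\{\mathbf{I-P}\}f\|_\sigma^2$, and the extra $|v|$ generated by $v\times B$ is controlled via the algebraic velocity weight already built into $\bar{\mathcal{E}}_{N_0,N_0}(t)$. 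This is precisely why the right-hand side of \eqref{Lemma4.1-1} contains the term $\tfrac{1}{\varepsilon^2}\|\Lambda^{1/2}\{\mathbf{I-P}\}f\|_\sigma^2$. Finally, the admissible range $s\in[\tfrac12,\tfrac32)$ is dictated by the HLS requirement $\tfrac{1}{p_1}+\tfrac{1}{p_2}<1$, equivalently $s<\tfrac32$, together with the lower bound $s\geq\tfrac12$ needed so that $\dot H^{s}\hookrightarrow L^{6/(3-2s)}$ supplies the $L^3$ space used to estimate $\|E\|_{L^3_x}$ above.
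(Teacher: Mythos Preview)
Your proposal is correct and follows essentially the same approach as the paper: Fourier/$\Lambda^{-1/2}$ energy identity with the streaming and current-source cancellations, coercivity of $\mathscr{L}$, then Hardy--Littlewood--Sobolev (via the $L^{3/2}_x$ embedding $\|\Lambda^{-1/2}h\|_{L^2_x}\lesssim\|h\|_{L^{3/2}_x}$) together with macro-micro decomposition to control the four nonlinear terms, with the parity cancellation of the pure-$\mathbf{P}f$ piece of the $v\times B$ term being the key to handling the $1/\varepsilon$ singularity. Two small organizational differences: in the paper all three pieces of (i) involving at least one $\mathbf{P}f$ factor (not only the pure $\mathbf{P}f$--$\mathbf{P}f$ one) are grouped under the $\|\Lambda^{-1/2}f\|(\|\Lambda^{1/2}E\|^2+\|\Lambda^{1/2}f\|_\sigma^2)$ bound, and for term (iv) the paper writes out the explicit Landau bilinear decomposition (in the spirit of Strain--Zhu) rather than invoking \eqref{Gamma-k}; neither affects the argument.
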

\begin{proof}
By taking Fourier transform of the first equation of \eqref{VML-drop-eps} with respect to $x$,  multiplying the resulting identity by $|\xi|^{-2{s}}\bar{\hat{f}}_\pm$ with $\bar{\hat{f}}_\pm$ being the complex conjugate of $\hat{f}_\pm$, and integrating the final result with respect to $\xi$ and $v$ over $\mathbb{R}^3_\xi\times\mathbb{R}^3_v$ that
we can get by using the coercivity property $\mathscr{L}$
\begin{eqnarray}\label{4.6}
&&\frac{d}{dt}\left(\left\|\Lambda^{-\frac12}f\right\|^2+\left\|\Lambda^{-\frac12}[E,B]\right\|^2\right)+\frac1{\varepsilon^2}\left\|\Lambda^{-\frac12}\{{\bf I-P}\}f\right\|_{\sigma}^2\nonumber\\
&\lesssim&\left|\left(\mathcal{F}[{q_0} E\cdot\nabla_{ v  }f]\mid|\xi|^{-1}\hat{f}\right)\right|+\left|\left(v  \cdot\mathcal{F}[{q_0} E f]\mid|\xi|^{-1}\hat{f}\right)\right|\nonumber\\
&&+\frac1{\varepsilon}\left|\left(\mathcal{F}[{q_0} v\times B\cdot\nabla_{ v  }f]\mid|\xi|^{-1}\hat{f}\right)\right|+\frac1{\varepsilon}\left|\left(\mathcal{F}[{ \mathscr{T}}(f,f)]\mid|\xi|^{-1}\hat{f}\right)\right|.
\end{eqnarray}
 One has by macro-micro decomposition
\begin{eqnarray}
&&\left(\mathcal{F}[E\cdot\nabla_{ v  }f]\mid|\xi|^{-1}\hat{f}\right)\nonumber\\
&=&\left(\mathcal{F}[E\cdot\nabla_{ v  }{\bf P}f]\mid|\xi|^{-1}\mathcal{F}[{\bf P}f]\right)
+\left(\mathcal{F}[E\cdot\nabla_{ v  }{\bf P}f]\mid|\xi|^{-1}\mathcal{F}[{\bf \{I-P\}}f]\right)\nonumber\\
&&+\left(\mathcal{F}[E\cdot\nabla_{ v  }{\bf \{I-P}f]\mid|\xi|^{-1}\mathcal{F}[{\bf P}f]\right)+\left(\mathcal{F}[E\cdot\nabla_{ v  }{\bf \{I-P\}}f]\mid|\xi|^{-1}\mathcal{F}[{\bf \{I-P\}}f]\right).
\end{eqnarray}
From Lemma \ref{lemma2.2}, Lemma \ref{lemma2.3}, Corollary \ref{corrollary}, and Lemma \ref{lemma2.4}, one can deduce that
the first three terms on the right-hand side of the above equation can be bounded by
\begin{eqnarray}\label{E-s}
&\lesssim&\left\|\Lambda^{-\frac12}\left(E\cdot {M}^{\delta}f\right)\right\|\left\|\Lambda^{-\frac12}\left({M}^{\delta}f\right)\right\|
\lesssim\left\|E\cdot {M}^{\delta}f\right\|_{L_x^{\frac32}}\left\|\Lambda^{-\frac12}\left({M}^{\delta}f\right)\right\|
\nonumber\\
&\lesssim&\|E\|_{L_x^{3}}\left\|{M}^{\delta}f\right\|_{L_x^{3}}
\left\|\Lambda^{-\frac12}\left({M}^{\delta}f\right)\right\|\lesssim\left\|\Lambda^{\frac12}E\right\|\left\|\Lambda^{\frac12}\left({M}^{\delta}f\right)\right\|\left\|\Lambda^{-\frac12}\left({M}^{\delta}f\right)\right\|
\nonumber\\
&\lesssim&\left\|\Lambda^{-\frac12}f\right\|\left(\left\|\Lambda^{\frac12}E\right\|^2+\left\|\Lambda^{\frac12}f\right\|^2_{\sigma}\right).
\end{eqnarray}
As for the last term, one has
\begin{eqnarray}\label{E-mic-s}
  &&\left(\mathcal{F}[E\cdot\nabla_{ v  }{\bf \{I-P\}}f]\mid|\xi|^{-1}\mathcal{F}[{\bf \{I-P\}}f]\right)\nonumber\\
  &\lesssim&\left\|\Lambda^{-\frac12}\left(E {\bf \{I-P\}}f \langle v\rangle^{\frac32}\right)\right\| \left\|\Lambda^{-\frac12}\left({\bf\{I-P\}}f\right)\right\|_{\sigma}\nonumber\\
  &\lesssim&\left\|E {\bf \{I-P\}}f \langle v\rangle^{\frac32}\right\|_{L_x^{\frac32}}\left\|\Lambda^{-\frac12}{\bf\{I-P\}}f\right\|_{\sigma}\nonumber\\
  &\lesssim&\left(\|E\|_{L_x^{6}}
\left\|{\bf \{I-P\}}f \langle v\rangle^{\frac32}\right\|\right)^2+\eta\left\|\Lambda^{-\frac12}{\bf\{I-P\}}f\right\|^2_{\sigma}\nonumber\\
&\lesssim&\left\|{\bf \{I-P\}}f \langle v\rangle^{\frac32}\right\|^2\left\|\Lambda^{\frac12}E\right\|^2
+\eta\left\|\Lambda^{-\frac12}{\bf\{I-P\}}f\right\|^2_{\sigma}.
\end{eqnarray}
Consequently, one has
\begin{eqnarray}
&&\left|\left(\mathcal{F}[E\cdot\nabla_{ v  }f]\mid|\xi|^{-1}\hat{f}\right)\right|\nonumber\\
&\lesssim&\left\|\Lambda^{-\frac12}f\right\|\left(\left\|\Lambda^{\frac12}E\right\|^2+\left\|\Lambda^{\frac12}f\right\|^2_{\sigma}\right)\nonumber\\
&&+\left\|{\bf \{I-P\}}f \langle v\rangle^{\frac32}\right\|^2\left\|\Lambda^{\frac12}E\right\|^2
+\eta\left\|\Lambda^{-\frac12}{\bf\{I-P\}}f\right\|^2_{\sigma}.
\end{eqnarray}
Similarly,
\begin{eqnarray}
  &&\left|\left(v  \cdot\mathcal{F}[E f]\mid|\xi|^{-1}\hat{f}\right)\right|\nonumber\\
&\lesssim&\left\|\Lambda^{-\frac12}f\right\|\left(\left\|\Lambda^{\frac12}E\right\|^2+\left\|\Lambda^{\frac12}f\right\|^2_{\sigma}\right)\nonumber\\
&&+\left\|{\bf \{I-P\}}f \langle v\rangle^{\frac32}\right\|^2\left\|\Lambda^{\frac12}E\right\|^2
+\eta\left\|\Lambda^{-\frac12}{\bf\{I-P\}}f\right\|^2_{\sigma}.
\end{eqnarray}
For the third term on the right-hand side of \eqref{4.6}, we have by repeating the argument used in deducing the estimate on the first two terms that
\begin{eqnarray}
&&\frac1{\varepsilon}\left|\left(\mathcal{F}[{q_0} v\times B\cdot\nabla_{ v  }f]\mid|\xi|^{-1}\hat{f}\right)\right|\nonumber\\
&\lesssim&\frac1{\varepsilon}\left|\left(\mathcal{F}[{q_0} v\times B\cdot\nabla_{ v  }{\bf P}f]\mid|\xi|^{-1}{\bf P}\hat{f}\right)\right|\nonumber\\
&&+\frac1{\varepsilon}\left|\left(\mathcal{F}[{q_0} v\times B\cdot\nabla_{ v  }{\bf P}f]\mid|\xi|^{-1}{\bf \{I-P\}}\hat{f}\right)\right|\nonumber\\
&&+\frac1{\varepsilon}\left|\left(\mathcal{F}[{q_0} v\times B\cdot\nabla_{ v  }{\bf \{I-P\}}f]\mid|\xi|^{-1}{\bf P}\hat{f}\right)\right|\nonumber\\
&&+\frac1{\varepsilon}\left|\left(\mathcal{F}[{q_0} v\times B\cdot\nabla_{ v  }{\bf \{I-P\}}f]\mid|\xi|^{-1}{\bf \{I-P\}}\hat{f}\right)\right|\nonumber\\
&=&\frac1{\varepsilon}\left|\left(\mathcal{F}[{q_0} v\times B\cdot\nabla_{ v  }{\bf P}f]\mid|\xi|^{-1}{\bf \{I-P\}}\hat{f}\right)\right|\nonumber\\
&&+\frac1{\varepsilon}\left|\left(\mathcal{F}[{q_0} v\times B\cdot\nabla_{ v  }{\bf \{I-P\}}f]\mid|\xi|^{-1}{\bf P}\hat{f}\right)\right|\nonumber\\
&&+\frac1{\varepsilon}\left|\left(\mathcal{F}[{q_0} v\times B\cdot\nabla_{ v  }{\bf \{I-P\}}f]\mid|\xi|^{-1}{\bf \{I-P\}}\hat{f}\right)\right|
\end{eqnarray}
where we used the fact that
\[\frac1{\varepsilon}\left|\left(\mathcal{F}[{q_0} v\times B\cdot\nabla_{ v  }{\bf P}f]\mid|\xi|^{-1}{\bf P}\hat{f}\right)\right|=0.\]
Similar with the argument used in \eqref{E-s}, one has
\begin{eqnarray}
&&\frac1{\varepsilon}\left|\left(\mathcal{F}[{q_0} v\times B\cdot\nabla_{ v  }{\bf P}f]\mid|\xi|^{-1}{\bf \{I-P\}}\hat{f}\right)\right|\nonumber\\
&&+\frac1{\varepsilon}\left|\left(\mathcal{F}[{q_0} v\times B\cdot\nabla_{ v  }{\bf \{I-P\}}f]\mid|\xi|^{-1}{\bf P}\hat{f}\right)\right|\nonumber\\
&\lesssim&\left\|\Lambda^{-\frac12}f\right\|\left(\left\|\Lambda^{\frac12}B\right\|^2
+\frac1{\varepsilon^2}\left\|\Lambda^{\frac12}{\bf\{I-P\}}f\right\|^2_{\sigma}\right)
+\left\|f\right\|^2\left\|\Lambda^{\frac12}B\right\|^2
\nonumber\\
&&+\frac\eta{\varepsilon^2}\left\|\Lambda^{-\frac12}{\bf\{I-P\}}f\right\|^2_{\sigma}.
\end{eqnarray}
By using the similar way as \eqref{E-mic-s}, one has
\begin{eqnarray}
 && \frac1{\varepsilon}\left|\left(\mathcal{F}[{q_0} v\times B\cdot\nabla_{ v  }{\bf \{I-P\}}f]\mid|\xi|^{-1}{\bf \{I-P\}}\hat{f}\right)\right|\nonumber\\
 &\lesssim&\left\|{\bf \{I-P\}}f \langle v\rangle^{\frac52}\right\|^2\left\|\Lambda^{\frac12}B\right\|^2
+ \frac\eta{\varepsilon^2}\left\|\Lambda^{-\frac12}{\bf\{I-P\}}f\right\|^2_{\sigma},
\end{eqnarray}
consequently, one  has
\begin{eqnarray}\label{B-s-neg}
  &&\frac1{\varepsilon}\left|\left(\mathcal{F}[{q_0} v\times B\cdot\nabla_{ v  }f]\mid|\xi|^{-1}\hat{f}\right)\right|\nonumber\\
&\lesssim&\left\|\Lambda^{-\frac12}f\right\|\left(\left\|\Lambda^{\frac12}B\right\|^2+\frac1{\varepsilon^2}\left\|\Lambda^{\frac12}{\bf\{I-P\}}f\right\|^2_{\sigma}\right)\nonumber\\
&&+\left\|f\langle v\rangle^{\frac52}\right\|^2\left\|\Lambda^{\frac12}B\right\|^2
+\frac\eta{\varepsilon^2}\left\|\Lambda^{-\frac12}{\bf\{I-P\}}f\right\|^2_{\sigma}\nonumber\\
&\lesssim&\bar{\mathcal{E}}_{N_0,N_0}(t)\left(\left\|\Lambda^{\frac12}B\right\|^2+\frac1{\varepsilon^2}\left\|\Lambda^{\frac12}{\bf\{I-P\}}f\right\|^2_{\sigma}\right)
+\frac\eta{\varepsilon^2}\left\|\Lambda^{-\frac12}{\bf\{I-P\}}f\right\|^2_{\sigma}.
\end{eqnarray}
As to the last term on the right-hand side of \eqref{4.6}, applying the similar argument as Proposition 1 in \cite{Strain-Zhu-2012-ARMA}, one has
\begin{eqnarray*}
&&\frac1{\varepsilon}\left(\mathcal{F}[{\mathscr{T}}(f,f)]\mid|\xi|^{-1}\mathcal{F}\left[{{\bf\{I-P\}}f}\right]\right)\\
&=&\frac1{\varepsilon}\left(\mathcal{F}\left[\partial_i(\{\Phi^{ij}\ast[{M}^{1/2}f]\}\partial_j f)-\{\Phi^{ij}\ast[v_i{M}^{1/2}f]\}\partial_j f\right.\right.\\
&&\left.\left.-\partial_i(\{\Phi^{ij}\ast[{M}^{1/2}\partial_jf]\} f)+\{\Phi^{ij}\ast[v_i{M}^{1/2}\partial_jf]\} f\right],|\xi|^{-1}\mathcal{F}\left[{{\bf\{I-P\}}f}\right]\right)\\
&=&-\frac1{\varepsilon}\left(\mathcal{F}\left[\{\Phi^{ij}\ast[{M}^{1/2}f]\}\partial_j f\right]\mid|\xi|^{-1}\mathcal{F}\left[{{\bf\{I-P\}}f}\right]\right)\\
&&
+\frac1{\varepsilon}\left(\mathcal{F}\left[\{\Phi^{ij}\ast[{M}^{1/2}\partial_jf]\} f\right]\mid|\xi|^{-1}\mathcal{F}\left[{{\bf\{I-P\}}f}\right]\right)\\
&&-\frac1{\varepsilon}\left(\mathcal{F}\left[\{\Phi^{ij}\ast[v_i{M}^{1/2}f]\}\partial_j f\right]\mid|\xi|^{-1}\mathcal{F}\left[{{\bf\{I-P\}}f}\right]\right)\\
&&
+\frac1{\varepsilon}\left(\mathcal{F}\left[\{\Phi^{ij}\ast[v_i{M}^{1/2}\partial_jf]\} f\right]\mid|\xi|^{-1}\mathcal{F}\left[{{\bf\{I-P\}}f}\right]\right)\nonumber\\
&\lesssim&\left\|\left|{M}^\delta f\right|^2_2\left|f\right|_\sigma^2\right\|_{L_x^{\frac32}}
+\frac\eta{\varepsilon^2}\left\|\Lambda^{-\frac12}{\bf\{I-P\}}f\right\|^2_\sigma\nonumber\\
&\lesssim&\left\|{M}^\delta f\right\|^2\left\|f\right\|^2_{{L_x^{6}}L^2_\sigma}+\frac\eta{\varepsilon^2}\left\|\Lambda^{-\frac12}{\bf\{I-P\}}f\right\|^2_\sigma\nonumber\\
&\lesssim&\left\|{M}^\delta f\right\|^2\left\|\nabla_xf\right\|^2_{\sigma}+\frac\eta{\varepsilon^2}\left\|\Lambda^{-\frac12}{\bf\{I-P\}}f\right\|^2_\sigma,
\end{eqnarray*}

Substituting the estimates  into (\ref{4.6}) yields \eqref{Lemma4.1-1}, which complete the proof of Lemma $\ref{Lemma4.1}$.
\end{proof}
\begin{lemma}\label{Lemma4.2}
There exists an interactive functional $G^{-\frac12}_{f}(t)$ satisfying
\begin{equation*}
G^{-\frac12}_{f}(t)\lesssim \left\|\Lambda^{\frac12}[f,E,B]\right\|^2+\left\|\Lambda^{-\frac12}[f,E,B]\right\|^2+\|\Lambda^{\frac32}E\|^2
\end{equation*}
such that
\begin{eqnarray}\label{Lemma4.2-1}
&&\frac{d}{dt}G^{-\frac12}_{f}(t)+\left\|\Lambda^{\frac12}{\bf P}f\right\|^2+\left\|\Lambda^{\frac12}[E,B]\right\|^2+\left\|\Lambda^{-\frac12}E\right\|_{H^1_x}^2+\left\|\Lambda^{-\frac12}(\rho_+-\rho_-)\right\|^2_{H^1_x}\nonumber\\
&\lesssim&\frac1{\varepsilon^2}\left\|\Lambda^{-\frac12}\{{\bf I-P}\}f\right\|^2_{H^2_xL^2_\sigma}+\mathcal{E}_{2}(t)\mathcal{D}_{2}(t)
\end{eqnarray}
holds for any $0\leq t\leq T$.
\end{lemma}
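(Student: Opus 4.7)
The overall plan mirrors the strategy of Lemma \ref{mac-dissipation}, but now working in the negative Sobolev space $\dot{H}^{-1/2}$ rather than $H^N_x$. The starting point is again the fluid-type system \eqref{Macro-equation1}, the micro moment identities \eqref{Micro-equation1}--\eqref{Micro-equation2}, and the two-species difference system \eqref{a_+-a_--original}. Applying $\Lambda^{-\frac12}$ to these identities (in place of $\partial^\alpha$) and testing against suitably paired $\Lambda^{-\frac12}$-moments will produce interactive functionals whose time derivative controls the desired dissipation of $[\Lambda^{\frac12}{\bf P}f, \Lambda^{\frac12}(E,B), \Lambda^{-\frac12}E, \Lambda^{-\frac12}(\rho_+-\rho_-)]$.

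Concretely, I would construct four interactive pieces in analogy with $G^f_\theta, G^f_u, G^f_\rho, G_{E,B}$: namely
\begin{enumerate}
\item $G^{-\frac12,f}_\theta(t) = \sum_j \bigl(\Lambda^{-\frac12}\mathcal{B}_j(\{{\bf I-P}\}f\cdot[1,1]),\,\varepsilon\Lambda^{-\frac12}\partial_j\theta\bigr)$, which upon differentiation in $t$ and use of $(\ref{Micro-equation1})_2$ and $(\ref{Macro-equation1})_3$ yields the dissipation $\|\Lambda^{\frac12}\theta\|^2$;
\item $G^{-\frac12,f}_u(t)$ built analogously from \eqref{Micro-equation2} and $(\ref{Macro-equation1})_2$, delivering $\|\Lambda^{\frac12}u\|^2$;
\item $G^{-\frac12,f}_\rho(t) = \sum_i\bigl(\Lambda^{-\frac12}u_i,\varepsilon\Lambda^{-\frac12}\partial_i(\rho_++\rho_-)\bigr)$, delivering $\|\Lambda^{\frac12}(\rho_++\rho_-)\|^2$;
\item for the electromagnetic and charge-difference parts, pair $\Lambda^{-\frac12}G$ with $\varepsilon\Lambda^{-\frac12}\nabla_x(\rho_+-\rho_-)$ and with $\varepsilon\Lambda^{-\frac12}E$ using $(\ref{a_+-a_--original})_2$, which using $\partial_t E - \nabla_x \times B = -\frac{1}{\varepsilon}G$ and $\partial_t B + \nabla_x\times E = 0$ produces the dissipations $\|\Lambda^{-\frac12}E\|_{H^1_x}^2 + \|\Lambda^{-\frac12}(\rho_+-\rho_-)\|_{H^1_x}^2 + \|\Lambda^{\frac12}B\|^2$.
\end{enumerate}
Each step proceeds exactly as in Lemma \ref{mac-dissipation}; the linear terms on the right produce either further microscopic dissipation $\frac{1}{\varepsilon^2}\|\Lambda^{-\frac12}\{{\bf I-P}\}f\|_{H^2_xL^2_\sigma}^2$ (after integration by parts in $x$ loses at most two derivatives) or $\eta$-small multiples of the targeted dissipations, which are absorbed.

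The main obstacle will be the nonlinear terms of the form $E\cdot v f$, $E\cdot\nabla_v f$, $\frac{1}{\varepsilon}v\times B\cdot\nabla_v f$, $\frac{1}{\varepsilon}\mathscr{T}(f,f)$, $E(\rho_++\rho_-)$, $\frac{1}{\varepsilon}u\times B$, tested against $\Lambda^{-\frac12}$-weighted moments. The trick, as in Lemma \ref{Lemma4.1}, is to combine the Hardy--Littlewood--Sobolev inequality $\|\Lambda^{-\frac12}h\|_{L^2_x}\lesssim \|h\|_{L^{3/2}_x}$ with H\"older and the Sobolev embedding $\|g\|_{L^3_x}\lesssim \|\Lambda^{\frac12}g\|_{L^2_x}$, so that typical products decompose as $\|\Lambda^{-\frac12}(Ef)\|\lesssim \|E\|_{L^3_x}\|{M}^\delta f\|_{L^2_v L^2_x}\lesssim \|\Lambda^{\frac12}E\|\,\|f\|_\sigma$. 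A macro-micro split is essential here: the ${\bf PP}$ part is handled by Sobolev embeddings on the fluid quantities, while ${\bf\{I-P\}}$ pieces are absorbed into $\mathcal{E}_2(t)\mathcal{D}_2(t)$ using the $\langle v\rangle^{3/2}$ moment control already present in $\mathcal{D}_{1,1}(t)\subset\mathcal{D}_2(t)$.

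Finally, setting
\[
G^{-\frac12}_f(t)= G^{-\frac12,f}_u(t) + \kappa_1 G^{-\frac12,f}_\theta(t) + \kappa_2 G^{-\frac12,f}_\rho(t) + \kappa_3\bigl(\Lambda^{-\frac12}G,\varepsilon\Lambda^{-\frac12}\nabla_x(\rho_+-\rho_-)\bigr) - \kappa_4\bigl(\Lambda^{-\frac12}G,\varepsilon\Lambda^{-\frac12}E\bigr) + \kappa_5\bigl(\Lambda^{-\frac12}E,\Lambda^{-\frac12}\nabla_x\times B\bigr)
\]
with a hierarchy $0<\kappa_5\ll\kappa_4\ll\kappa_3\ll\kappa_2\ll\kappa_1\ll 1$ and absorbing all small error terms into the left-hand side yields \eqref{Lemma4.2-1}. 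The stated upper bound on $G^{-\frac12}_f(t)$ follows immediately from Cauchy--Schwarz and the identification $\|\Lambda^{-\frac12}\nabla_x h\|= \|\Lambda^{\frac12}h\|$, together with the bound $\|\Lambda^{-\frac12}\nabla_x\times B\|=\|\Lambda^{\frac12}B\|\lesssim\|[\Lambda^{-\frac12}B,\Lambda^{\frac12}B]\|$ and $\|\Lambda^{-\frac12}\nabla_x(\rho_+-\rho_-)\|\lesssim\|\Lambda^{\frac12}(\rho_+-\rho_-)\|\lesssim\|\Lambda^{\frac12}f\|$. The extra term $\|\Lambda^{\frac32}E\|^2$ appearing in the upper bound for $G^{-\frac12}_f(t)$ comes from the $\kappa_5$-coupling via Plancherel applied to the cross term between $\Lambda^{-\frac12}E$ and $\Lambda^{\frac12}\nabla_x\times B$.
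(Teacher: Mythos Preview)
Your proposal is correct and takes essentially the same approach as the paper: the paper's own proof of this lemma consists of the single sentence ``This lemma can be proved by a similar way as Lemma~\ref{mac-dissipation}, we omit its proof for brevity,'' and your sketch is precisely a careful elaboration of that strategy, transplanting the interactive functionals of Lemma~\ref{mac-dissipation} to the $\dot H^{-1/2}$ setting and handling the nonlinear terms via Hardy--Littlewood--Sobolev as in Lemma~\ref{Lemma4.1}.
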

\begin{proof}
This lemma can be proved by a similar way as Lemma \ref{mac-dissipation}, we omit its proof for brevity.
\end{proof}
Based on the above two lemmas, one has
\begin{proposition}\label{prop-neg}
Under Under {\bf Assumption 1} and {\bf Assumption 2}, one has
\begin{eqnarray}\label{negative-estimate}
&&\frac{d}{dt}\left(\left\|\Lambda^{-\frac12}f\right\|^2+\left\|\Lambda^{-\frac12}[E,B]\right\|^2+\kappa_1G^{-\frac12}_{f}(t)\right)+\frac1{\varepsilon^2}\left\|\Lambda^{-\frac12}\{{\bf I-P}\}f\right\|_{\sigma}^2\nonumber\\
&&+\kappa_1\left\|\Lambda^{\frac12}{\bf P}f\right\|^2+\kappa_1\left\|\Lambda^{\frac12}[E,B]\right\|^2+\kappa_1\left\|\Lambda^{-\frac12}E\right\|_{H^1_x}^2+\kappa_1\left\|\Lambda^{-\frac12}(\rho_+-\rho_-)\right\|^2_{H^1}\nonumber\\
&\lesssim&\bar{\mathcal{E}}_{N_0,N_0}(t)\left\|\nabla_xf\right\|^2_{\sigma}.
\end{eqnarray}
\end{proposition}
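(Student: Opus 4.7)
The plan is to form the linear combination \eqref{Lemma4.1-1}$+\kappa_1\cdot$\eqref{Lemma4.2-1}, where $\kappa_1>0$ is a fixed small constant to be chosen below. The sum of the two left-hand sides matches the left-hand side of \eqref{negative-estimate} verbatim, so the task reduces to showing that the combined right-hand side is dominated, after appealing to \textbf{Assumption 2}, by $\bar{\mathcal{E}}_{N_0,N_0}(t)\|\nabla_x f\|_\sigma^2$ plus contributions that are strictly absorbable into the new left-hand side.

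The combined right-hand side splits into three groups. First, the quadratically-small contribution $\|\Lambda^{-1/2}f\|\bigl(\|\Lambda^{1/2}E\|^2+\|\Lambda^{1/2}f\|_\sigma^2\bigr)$ from \eqref{Lemma4.1-1}: since $\|\Lambda^{-1/2}f\|^2$ is part of $\bar{\mathcal{E}}_{N_0,N_0}(t)\leq M_2$, the prefactor is bounded by $\sqrt{M_2}$. After using $\|\Lambda^{1/2}f\|_\sigma^2\leq\|\Lambda^{1/2}{\bf P}f\|^2+\|\Lambda^{1/2}\{\mathbf{I-P}\}f\|_\sigma^2$, the pieces involving $\|\Lambda^{1/2}E\|^2$ and $\|\Lambda^{1/2}{\bf P}f\|^2$ are absorbed into the $\kappa_1\|\Lambda^{1/2}[E,B]\|^2+\kappa_1\|\Lambda^{1/2}{\bf P}f\|^2$ already present on the left-hand side, provided $\sqrt{M_2}\ll\kappa_1$. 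Second, the energy-weighted contribution $\bar{\mathcal{E}}_{N_0,N_0}(t)\bigl(\|\Lambda^{1/2}E\|^2+\|\Lambda^{1/2}B\|^2\bigr)$ is absorbed the same way under the same smallness, while $\bar{\mathcal{E}}_{N_0,N_0}(t)\|\nabla_xf\|_\sigma^2$ is exactly the desired right-hand side.

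Third, and most delicate, are the microscopic residues $\bar{\mathcal{E}}_{N_0,N_0}(t)\,\varepsilon^{-2}\|\Lambda^{1/2}\{\mathbf{I-P}\}f\|_\sigma^2$ coming from \eqref{Lemma4.1-1} together with the full expansion of $\kappa_1\varepsilon^{-2}\|\Lambda^{-1/2}\{\mathbf{I-P}\}f\|_{H^2_xL^2_\sigma}^2$ from $\kappa_1$ times \eqref{Lemma4.2-1}, namely
\[
\frac{\kappa_1}{\varepsilon^2}\Bigl(\|\Lambda^{-1/2}\{\mathbf{I-P}\}f\|_\sigma^2+\|\Lambda^{1/2}\{\mathbf{I-P}\}f\|_\sigma^2+\|\Lambda^{3/2}\{\mathbf{I-P}\}f\|_\sigma^2\Bigr).
\]
The first summand is absorbed directly into $\varepsilon^{-2}\|\Lambda^{-1/2}\{\mathbf{I-P}\}f\|_\sigma^2$ on the left-hand side once $\kappa_1$ is taken small. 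For the remaining two, I would invoke the fractional interpolation bounds
\[
\|\Lambda^{1/2}g\|_\sigma^2\lesssim\|g\|_\sigma\|\nabla_xg\|_\sigma,\qquad \|\Lambda^{3/2}g\|_\sigma^2\lesssim\|\nabla_xg\|_\sigma\|\nabla_x^2g\|_\sigma,
\]
combined with \textbf{Assumption 2} to trade the singular $\varepsilon^{-2}$ prefactor for a power of $\bar{\mathcal{E}}_{N_0,N_0}(t)\leq M_2$, yielding bounds of the form $\bar{\mathcal{E}}_{N_0,N_0}(t)\|\nabla_xf\|_\sigma^2$ up to contributions absorbable into the already-available left-hand side terms. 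Finally, $\kappa_1\mathcal{E}_2(t)\mathcal{D}_2(t)$ from \eqref{Lemma4.2-1} is immediately bounded by $\bar{\mathcal{E}}_{N_0,N_0}(t)\|\nabla_xf\|_\sigma^2$ through the definitions of $\mathcal{E}_2$ and $\mathcal{D}_2$. The main obstacle in this strategy is the last absorption step, namely trading $\varepsilon^{-2}\|\Lambda^{3/2}\{\mathbf{I-P}\}f\|_\sigma^2$ for the clean right-hand side $\bar{\mathcal{E}}_{N_0,N_0}(t)\|\nabla_xf\|_\sigma^2$; this is precisely where the joint smallness supplied by \textbf{Assumption 1} and \textbf{Assumption 2} is indispensable, and where a careful choice of $\kappa_1$ relative to $M_1$, $M_2$ must be made to close the bookkeeping.
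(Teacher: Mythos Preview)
Your overall strategy---forming the combination \eqref{Lemma4.1-1}$+\kappa_1\cdot$\eqref{Lemma4.2-1} and absorbing into the new left-hand side---is exactly the paper's. The first two ``groups'' are handled correctly and match the paper's argument. The gap is in your treatment of the third group.

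You propose the product-form interpolations $\|\Lambda^{1/2}g\|_\sigma^2\lesssim\|g\|_\sigma\|\nabla_xg\|_\sigma$ and $\|\Lambda^{3/2}g\|_\sigma^2\lesssim\|\nabla_xg\|_\sigma\|\nabla_x^2g\|_\sigma$, and then claim that \textbf{Assumption~2} lets you ``trade the singular $\varepsilon^{-2}$ prefactor for a power of $\bar{\mathcal{E}}_{N_0,N_0}(t)$''. This step fails: the contribution $\tfrac{\kappa_1}{\varepsilon^2}\|\Lambda^{-1/2}\{\mathbf{I-P}\}f\|_{H^2_xL^2_\sigma}^2$ from $\kappa_1\cdot$\eqref{Lemma4.2-1} carries no $\bar{\mathcal{E}}$ prefactor at all, and \textbf{Assumption~2} bounds only energy functionals, not the dissipation quantities appearing here. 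Your interpolation yields, for instance, $\tfrac{\kappa_1}{\varepsilon^2}\|\nabla_x\{\mathbf{I-P}\}f\|_\sigma\|\nabla_x^2\{\mathbf{I-P}\}f\|_\sigma$; neither factor is small and neither matches anything on the left, so there is no absorption mechanism and no route to the stated right-hand side.

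The paper uses a different interpolation, between $\Lambda^{-1/2}$ and $\nabla_x$ rather than between $L^2$ and $\nabla_x$:
\[
\tfrac{1}{\varepsilon^2}\|\Lambda^{1/2}\{\mathbf{I-P}\}f\|_\sigma^2
\;\lesssim\; \tfrac{1}{\varepsilon^2}\|\nabla_x\{\mathbf{I-P}\}f\|_\sigma^2
\;+\; \tfrac{\eta}{\varepsilon^2}\|\Lambda^{-1/2}\{\mathbf{I-P}\}f\|_\sigma^2 .
\]
The crucial point is that the $\eta$-small piece is precisely $\tfrac{\eta}{\varepsilon^2}\|\Lambda^{-1/2}\{\mathbf{I-P}\}f\|_\sigma^2$, which coincides with the dissipation already present on the left of \eqref{negative-estimate} and is therefore absorbed \emph{despite} the $\varepsilon^{-2}$ factor. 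The remaining piece involves only integer-order spatial derivatives of $\{\mathbf{I-P}\}f$. Your interpolation endpoints never produce a $\Lambda^{-1/2}$ term, so this absorption is unavailable to you; this is the missing idea.
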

\begin{proof}
  By the interpolation with respect to the spatial derivatives, one has
\begin{eqnarray}
  &&\frac1{\varepsilon^2}\left\|\Lambda^{\frac12}{\bf\{I-P\}}f\right\|^2_{\sigma}\nonumber\\
  &\lesssim&\frac1{\varepsilon^2}\left\|\nabla_x{\bf\{I-P\}}f\right\|^2_{\sigma}+
  \frac\eta{\varepsilon^2}\left\|\Lambda^{-\frac12}{\bf\{I-P\}}f\right\|^2_{\sigma}
\end{eqnarray}
In addition,
\begin{eqnarray}
  &&\left\|\Lambda^{-\frac12}f\right\|\left\|\Lambda^{\frac12}{\bf \{I-P\}}f\right\|^2_{\sigma}\nonumber\\
  &\lesssim&\left\|\Lambda^{-\frac12}f\right\|^2\left\|\Lambda^{\frac12}{\bf \{I-P\}}f\right\|^2_{\sigma}+
  \eta\left\|\Lambda^{\frac12}{\bf \{I-P\}}f\right\|^2_{\sigma}\nonumber\\
  &\lesssim&\left\|\Lambda^{-\frac12}f\right\|^2\left\{\left\|\nabla_x{\bf\{I-P\}}f\right\|^2_{\sigma}+
  \eta\left\|\Lambda^{-\frac12}{\bf\{I-P\}}f\right\|^2_{\sigma}\right\}+\eta\left\|\Lambda^{\frac12}{\bf \{I-P\}}f\right\|^2_{\sigma}.
\end{eqnarray}
By using the above estimates, one can deduce  \eqref{negative-estimate} from a proper linear combination of \eqref{Lemma4.1-1} and \eqref{Lemma4.2-1}.
\end{proof}

\section{The a priori estimates}
Based on Proposition \ref{prop1} and \ref{prop-neg}, we are ready to construct the a priori estimates
\begin{equation}\label{Def-a-priori}
  \sup_{0<t\leq T}\left\{\left\|\Lambda^{-\frac12}[f,E,B]\right\|^2+(1+t)^{-\frac{1+\epsilon_0}2}\sum_{|\alpha|=N}{\varepsilon^2}\left\|w_l(\alpha)\partial^\alpha f\right\|^2+\mathcal{E}_{N-1,l}(t)+\mathcal{E}_{N}(t)\right\}\leq \overline{M}
\end{equation}
where $ \overline{M}$ is a sufficiently small positive constant.

One has
\begin{eqnarray}
&&\frac{d}{dt}\left\{\left\|\Lambda^{-\frac12}[f,E,B]\right\|^2+(1+t)^{-\frac{1+\epsilon_0}2}\sum_{|\alpha|=N}{\varepsilon^2}\left\|w_l(\alpha)\partial^\alpha f\right\|^2+\mathcal{E}_{N-1,l}(t)+\mathcal{E}_{N}(t)\right\}\nonumber\\
&&
+\mathcal{D}_{N}(t)+\mathcal{D}_{N-1,l}(t)\lesssim 0,
\end{eqnarray}
which gives that
\begin{eqnarray}
  &&\left\|\Lambda^{-\frac12}[f,E,B](t)\right\|^2+(1+t)^{-\frac{1+\epsilon_0}2}\sum_{|\alpha|=N}{\varepsilon^2}\left\|w_l(\alpha)\partial^\alpha f(t)\right\|^2+\mathcal{E}_{N-1,l}(t)+\mathcal{E}_{N}(t)\nonumber\\
  &\lesssim&\left\|\Lambda^{-\frac12}[f,E,B](0)\right\|^2+\sum_{|\alpha|=N}{\varepsilon^2}\left\|w_l(\alpha)\partial^\alpha f(0)\right\|^2+\mathcal{E}_{N-1,l}(0)+\mathcal{E}_{N}(0)\nonumber\\
  &\lesssim&Y^2_{f_\varepsilon,E_\varepsilon,B_\varepsilon}(0),
\end{eqnarray}
where $Y_{f_\varepsilon,E^\varepsilon,B^\varepsilon}(0)$ is defined in \eqref{Def-Y_0},
thus we close the a priori estimates if the initial data $Y_0$ is taken as small sufficiently.
Furthermore, we can get that
\begin{equation}
\mathcal{E}^k_{N_0}(t)\lesssim Y^2_{f_\varepsilon,E_\varepsilon,B_\varepsilon}(0)(1+t)^{-(k+\frac12)},\quad 0\leq t\leq T.
\end{equation}

\section{Limit to two fluid incompressible Navier-Stokes-Fourier-Maxwell equations with Ohm's law}
\label{Sec:Limits}
In this section, we will derive the two fluid incompressible NSFM equations \eqref{INSFM-Ohm} with Ohm's law from the perturbed two-species VML as $\eps \rightarrow 0$.

As \cite{Jiang-Luo-2022-Ann.PDE}, We first introduce the following fluid variables
\begin{equation}\label{Fluid-Quanities}
  \begin{aligned}
    \rho_\eps = \tfrac{1}{2} \langle {f}_\eps , {q_2} \sqrt{M} \rangle_{L^2_v} \,, \ u_\eps = \tfrac{1}{2} \langle f_\eps , {q_2} v \sqrt{M} \rangle_{L^2_v} \,, \ \theta_\eps = \tfrac{1}{2} \langle f_\eps , {q_2} ( \tfrac{|v|^2}{3} - 1 ) \sqrt{M} \rangle_{L^2_v} \,, \\
    n_\eps = \langle f_\eps , {q_1} \sqrt{M} \rangle_{L^2_v} \,,\ j_\eps = \tfrac{1}{\eps} \langle f_\eps , {q_1} v \sqrt{M} \rangle_{L^2_v} \,,\ w_\eps = \tfrac{1}{\eps} \langle f_\eps , {q_1} ( \tfrac{|v|^2}{3} - 1 )\sqrt{M} \rangle_{L^2_v} \,.
  \end{aligned}
\end{equation}
We use the similar argument as \eqref{Macro-equation1}, we can deduce the following local conservation laws
	\begin{equation}\label{Local-Consvtn-Law}
	  \left\{
	    \begin{array}{l}
	      \partial_t \rho_\eps + \tfrac{1}{\eps} \div_x \, u_\eps = 0 \,, \\
	      \partial_t u_\eps + \tfrac{1}{\eps} \nabla_x ( \rho_\eps + \theta_\eps ) + \div_x \, \big\langle \widehat{A} (v) \sqrt{M}\cdot {q_2} , \tfrac{1}{\eps} \mathscr{L} ( \tfrac{f_\eps }{2} ) \big\rangle_{L^2_v} = \tfrac{1}{2} ( n_\eps E_\eps + j_\eps \times B_\eps ) \,, \\
	      \partial_t \theta_\eps + \tfrac{2}{3} \tfrac{1}{\eps} \div_x \, u_\eps + \tfrac{2}{3} \div_x \, \big\langle \widehat{B} (v) \sqrt{M}\cdot {q_2} , \tfrac{1}{\eps} \mathscr{L} ( \tfrac{f_\eps}{2} ) \big\rangle_{L^2_v} = \tfrac{\eps}{3} j_\eps \cdot E_\eps \,, \\
	      \partial_t n_\eps + \div_x \, j_\eps = 0 \,, \\
	      \partial_t E_\eps - \nabla_x \times B_\eps = - j_\eps \,, \\
	      \partial_t B_\eps + \nabla_x \times E_\eps = 0 \,, \\
	      \div_x \, E_\eps = n_\eps \,, \quad \div_x B_\eps = 0 \,.
	    \end{array}
	  \right.
	\end{equation}
where $\mathscr{L}[\widehat{A} (v) \sqrt{M}\cdot {q_2}]=\left(v\otimes v-\frac{|v|^2}{3}I_3\right)\sqrt{M}\cdot {q_2}\in\ker^{\bot}({\mathscr{L}})$ with $\widehat{A} (v) \sqrt{M}\cdot {q_2}\in\ker^{\bot}({\mathscr{L}})\cdot {q_2}$
and $\mathscr{L}[\widehat{B} (v) \sqrt{M}]=\left(v\left(\frac{|v|^2}2-\frac52\right)\right)\sqrt{M}\cdot {q_2}\in\ker^{\bot}({\mathscr{L}})$ with $\widehat{B} (v) \sqrt{M}\cdot {q_2}\in\ker^{\bot}({\mathscr{L}})$.

Based on Theorem \ref{Main-Thm-1}, the Cauchy problem to \eqref{VML-F} admits a global solution
$(f_\epsilon,E_\epsilon,B_\epsilon)$ belonging to $L^\infty(\mathbb{R}_+;H^N_xL^2_v)$, one also has
\begin{eqnarray}\label{limits-1}
&&\frac{d}{dt}\left\{(1+t)^{-\frac{1+\epsilon_0}2}\sum_{|\alpha|=N}{\varepsilon^2}\left\|w_l(\alpha)\partial^\alpha f_\varepsilon\right\|^2+\mathcal{E}_{N-1,l}(t)+\mathcal{E}_{N}(t)\right\}\nonumber\\
&&+\mathcal{D}_{N}(t)+\mathcal{D}_{N-1,l}(t)
\lesssim0,
\end{eqnarray}
then
\begin{eqnarray}\label{limits-2}
  &&\sup_{t\geq0}\left\{(1+t)^{-\frac{1+\epsilon_0}2}\sum_{|\alpha|=N}{\varepsilon^2}\left\|w_l(\alpha)\partial^\alpha f_\varepsilon\right\|^2+\mathcal{E}_{N-1,l}(t)+\mathcal{E}_{N}(t)\right\}\nonumber\\
  &\leq&\sum_{|\alpha|=N}{\varepsilon^2}\left\|w_l(\alpha)\partial^\alpha f_\varepsilon(0)\right\|^2+\mathcal{E}_{N-1,l}(0)+\mathcal{E}_{N}(0)\leq C
\end{eqnarray}
and
\begin{eqnarray}\label{limits-3}
  \int^\infty_0\left\{\mathcal{D}_{N}(t)+\mathcal{D}_{N-1,l}(t)\right\}dt\leq C
\end{eqnarray}
where $C$ is independent of $\varepsilon$. In fact, \eqref{limits-3} tells us that
\begin{eqnarray}\label{limits-4}
\sum_{|\alpha|\leq N}\int^\infty_0\left\|\partial^\alpha\{{\bf I-P}\}f_\varepsilon\right\|^2_\sigma dt\lesssim C\varepsilon^2.
\end{eqnarray}
which yields that
\begin{equation}
\partial^\alpha\{{\bf I-P}\}f_\varepsilon\rightarrow 0, \textrm{\ strongly in $L^2(\mathbb{R}_+;H^N_xL^2_v)$ as $\varepsilon\rightarrow0$.}
\end{equation}
By standard convergent method, there exist $f,E,B,\rho,u,\theta,n,w,j$ such that
\begin{eqnarray}\label{limit-weak}
  f_\epsilon&\rightarrow& f,\textrm{weakly$-*$ for $t>0$, weakly in $H^N_x$},\nonumber\\
  E_\epsilon&\rightarrow& E,\textrm{weakly$-*$ for $t>0$, weakly in $H^N_x$},\nonumber\\
  B_\epsilon&\rightarrow& B,\textrm{weakly$-*$ for $t>0$, weakly in $H^N_x$},\nonumber\\
  \rho_\epsilon&\rightarrow& \rho,\textrm{weakly$-*$ for $t>0$, weakly in $H^N_x$},\nonumber\\
  u_\epsilon&\rightarrow& u,\textrm{weakly$-*$ for $t>0$, weakly in $H^N_x$},\nonumber\\
  \theta_\epsilon&\rightarrow& \theta,\textrm{weakly$-*$ for $t>0$, weakly in $H^N_x$},\nonumber\\
  n_\epsilon&\rightarrow& n,\textrm{weakly$-*$ for $t>0$, weakly in $H^N_x$},\nonumber\\
  w_\epsilon&\rightarrow& w,\textrm{weakly in $L^2(\mathbb{R}^+;H^N_xL^2_v)$},\nonumber\\
  j_\epsilon&\rightarrow& j,\textrm{weakly in $L^2(\mathbb{R}^+;H^N_xL^2_v)$}.
\end{eqnarray}
In the sense of distributions, utilizing the uniform estimates \eqref{limits-1}, \eqref{limits-2}, \eqref{limits-3} and \eqref{limits-4}, applying Aubin-Lions-Simon Theorem and the similar argument as \cite{Jiang-Luo-2022-Ann.PDE}, we can deduce that
$$(u, \theta, n, E, B) \in C(\R^+; H^{N-1}_x ) \cap L^\infty (\R^+; H^N_x) $$
satisfy the following two fluid incompressible NSFM equations with Ohm's law
\begin{equation*}
  \left\{
    \begin{array}{l}
      \partial_t u + u \cdot \nabla_x u - \mu \Delta_x u + \nabla_x p = \tfrac{1}{2} ( n E + j \times B ) \,, \qquad \div_x \, u = 0 \,, \\ [2mm]
      \partial_t \theta + u \cdot \nabla_x \theta - \kappa \Delta_x \theta = 0 \,, \qquad\qquad\qquad\qquad\qquad\quad\ \, \rho + \theta = 0 \,, \\ [2mm]
      \partial_t E - \nabla_x \times B = - j \,, \qquad\qquad\qquad\qquad\qquad\qquad\ \ \ \, \div_x \, E = n \,, \\ [2mm]
      \partial_t B + \nabla_x \times E = 0 \,, \qquad\qquad\qquad\qquad\qquad\qquad\qquad \div_x \, B = 0 \,, \\ [2mm]
      \qquad \qquad j - nu = \sigma \big( - \tfrac{1}{2} \nabla_x n + E + u \times B \big) \,, \qquad\quad\,\ w = \tfrac{3}{2} n \theta \,,
    \end{array}
  \right.
\end{equation*}
with initial data
\begin{equation*}
  \begin{aligned}
    u (0,x) = \mathcal{P} u^{in} (x) \,, \ \theta (0,x) = \tfrac{3}{5} \theta^{in} (x) - \tfrac{2}{5} \rho^{in} (x) \,, \ E(0,x) = E^{in} (x) \,, \ B(0,x) = B^{in} (x) \,.
  \end{aligned}
\end{equation*}
We omit its detail proof for brevity.
Moreover, from the uniform bound \eqref{Main-Thm-1-1} in Theorem \ref{Main-Thm-1} and the convergence \eqref{limit-weak}, we have
\begin{eqnarray}
   &&\sup_{t \geq 0} \big( \|f \|^2_{H^N_{x}L^2_v} + \| E \|^2_{H^N_x} + \| B \|^2_{H^N_x} \big) (t)\nonumber\\
    & \leq& \sup_{t \geq 0} \big( \| f_\eps \|^2_{H^N_{x}L^2_v} + \| E_\eps \|^2_{H^N_x} + \| B_\eps \|^2_{H^N_x} \big) (t) \nonumber\\
    & \lesssim&Y^2_{f_\varepsilon,E_\varepsilon,B_\varepsilon}(0)\rightarrow Y^2_{f,E,B}(0)
\end{eqnarray}
as $\eps \rightarrow 0$. Hence
\begin{equation*}
  \begin{aligned}
    &&\sup_{t \geq 0} \big( \|f \|^2_{H^N_{x}L^2_v} + \| E \|^2_{H^N_x} + \| B \|^2_{H^N_x} \big) (t) \lesssim Y^2_{f,E,B}(0) \lesssim \ell_0 \,.
  \end{aligned}
\end{equation*}
There are positive generic constants $C_h$ and $C_l$ such that
\begin{equation*}
  \begin{aligned}
    C_l \big( \| u \|^2_{H^N_x} + \| \theta \|^2_{H^N_x} + \| n \|^2_{H^N_x} \big) \leq \| f \|^2_{H^N_{x}L^2_v} \leq C_h \big( \| u \|^2_{H^N_x} + \| \theta \|^2_{H^N_x} + \| n \|^2_{H^N_x} \big) \,.
  \end{aligned}
\end{equation*}
Consequently, the solution $(u,\theta,n,E,B)$ to the two fluid incompressible NSFM equations \eqref{INSFM-Ohm} with Ohm's law constructed above admits the energy bound
\begin{equation*}
  \begin{aligned}
    \sup_{t \geq 0} \big( & \| u \|^2_{H^N_x} + \| \theta \|^2_{H^N_x} + \| n \|^2_{H^N_x} + \| E \|^2_{H^N_x} + \| B \|^2_{H^N_x} \big) (t) \lesssim Y^2_{f,E,B}(0) \lesssim \ell_0.
  \end{aligned}
\end{equation*}
Then the proof of Theorem \ref{Main-Thm-2} is complete.

\appendix

\section{Appendix}
\begin{lemma}\label{Lemma2.1} It holds that
\begin{itemize}
\item[(i).] (\cite{Wang-12}) Let $w_{l}$ be given in $(\ref{weight})$, there exist positive constants $\eta>0$ and $C_\eta>0$ such that
\begin{equation}\label{L_0}
  \left\langle w_{l}^2(t, v){ L}f,f\right\rangle\geq \eta|f|_{\sigma,w_{l}}^2-C_{\eta}\left|\chi_{\{| v|\leq2C_{\eta}\}}f\right|^2.
\end{equation}
Here $\chi_{\{| v|\leq2C_{\eta}\}}$ denotes the characteristic function of the set $\{v=( v_1,  v_2,  v_3) \in {\mathbb{R}}^3: | v|\leq2C_{\eta}\}$.
\item[(ii).](\cite{Wang-12}) Let $w_{l}$ be given in $(\ref{weight})$, then we have
\begin{equation}\label{Gamma-w}
  \left\langle w_{l}^2(t, v)\partial^{\alpha}_{\beta}\Gamma(g_1,g_2),\partial^{\alpha}_{\beta}g_3\right\rangle
  \lesssim \sum_{\alpha_1\leq \alpha\atop \bar{\beta}\leq\beta_1\leq\beta}
 \left|\mu^{\delta}\partial^{\alpha_1}_{\bar{\beta}}g_1\right|_2
  \left|\partial^{\alpha-\alpha_1}_{\beta-\beta_1}g_2\right|_{\sigma,w_{l}}
  \left(\left|\partial^{\alpha}_{\beta}g_3\right|_{\sigma,w_{l}}
  +l\left|\partial^{\alpha}_{\beta}g_3\right|_{2,\frac{w_{l}}{\langle v\rangle^{-\gamma/2}}}\right).
\end{equation}
Here $\delta>0$ is a sufficiently small universal constant. In particular, we have
\begin{equation}\label{Gamma-0}
  \left\langle\Gamma(g_1,g_2),g_3\right\rangle
  \lesssim \left|\mu^{\delta}g_1\right|_2\left|g_2\right|_{\sigma}\left|g_3\right|_{\sigma}.
\end{equation}
\end{itemize}
\end{lemma}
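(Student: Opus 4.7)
Both statements are weighted versions of the classical coercivity and trilinear estimates for the linearized Landau operator with Coulomb potential. My plan is to follow the strategy of \cite{Wang-12}, adapting it so as to verify that the specific time-velocity weight $w_\ell(t,v)=e^{q\langle v\rangle^2/(1+t)^\vartheta}\langle v\rangle^{3(\ell-|\alpha|)}$ used throughout this paper is compatible with that framework.

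For (i), I would decompose $\mathscr L f=-\mathcal A f-\mathcal K f$, where $\mathcal A$ is the diffusive part coming from $Q(M,\sqrt M f)$ and $\mathcal K$ is the lower-order integral part coming from $Q(\sqrt M f,M)$. Writing $\mathcal A f=-\partial_i(\sigma^{ij}\partial_j f)+\sigma^{ij}\frac{v_iv_j}{4}f$ with $\sigma^{ij}(v)=\Phi^{ij}\ast M$, testing against $w_\ell^2 f$ and integrating by parts in $v$ produces the target term $|f|_{\sigma,w_\ell}^2$ together with commutators of the form $\sigma^{ij}(\partial_j w_\ell^2)f(\partial_i f)$. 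Using the anisotropic spectral bound $\sigma^{ij}\xi_i\xi_j\lesssim\langle v\rangle^{-1}|\xi|^2+\langle v\rangle^{-3}|v\cdot\xi|^2$ characteristic of the Coulomb kernel, together with $|\nabla_v w_\ell|\lesssim\bigl(q\langle v\rangle(1+t)^{-\vartheta}+\ell\langle v\rangle^{-1}\bigr)w_\ell$, these commutators can be absorbed into a small multiple of $|f|_{\sigma,w_\ell}^2$. The $\mathcal K$ part is handled by splitting $\{|v|\le 2C_\eta\}$ versus $\{|v|\ge 2C_\eta\}$: on the large-velocity region it is absorbed into $|f|_{\sigma,w_\ell}^2$ via the decay of its Maxwellian kernel, while the small-velocity piece gives exactly the residual $C_\eta|\chi_{\{|v|\le 2C_\eta\}}f|^2$.

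For (ii), the plan is to apply Leibniz to $\partial^{\alpha}_{\beta}\Gamma(g_1,g_2)$ so that it becomes a sum over $\alpha_1\le\alpha$, $\bar\beta\le\beta_1\le\beta$ of bilinear expressions $\Gamma_*(\partial^{\alpha_1}_{\bar\beta}g_1,\partial^{\alpha-\alpha_1}_{\beta-\beta_1}g_2)$, where $\Gamma_*$ has the same kernel structure as $\Gamma$ but possibly with $v$-moments of $\sqrt M$ produced by the $\beta$-derivatives. Testing against $w_\ell^2\partial^{\alpha}_{\beta}g_3$ and performing one integration by parts in $v$ (as in the analysis of $\mathcal A$ above) casts the $\partial_i$ of the Landau operator onto either $w_\ell^2\partial^{\alpha}_{\beta}g_3$ or $\partial^{\alpha-\alpha_1}_{\beta-\beta_1}g_2$, which produces the $\sigma$-norm weight naturally. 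Applying Cauchy-Schwarz in $v$ with $\sigma^{ij}$ as the metric on one side and $w_\ell^2$ on the other, and using the pointwise convolution bound $\bigl|\int\Phi^{ij}(v-v')\mu^\delta(v')g_1(v')\,dv'\bigr|\lesssim\mu^{\delta/2}(v)|\mu^\delta g_1|_2$ in the spirit of \cite{Guo-CMP-02}, yields the claimed product $|\mu^\delta\partial^{\alpha_1}_{\bar\beta}g_1|_2\,|\partial^{\alpha-\alpha_1}_{\beta-\beta_1}g_2|_{\sigma,w_\ell}$. The extra factor $\ell$ in the last term on the right is generated precisely when $\partial_i$ falls onto the polynomial factor of $w_\ell$, which contributes $\ell\langle v\rangle^{-1}w_\ell$; the resulting pairing is then weaker than the $\sigma$-norm by one power of $\langle v\rangle^{\gamma/2}$, explaining the weight $w_\ell/\langle v\rangle^{-\gamma/2}$.

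The main obstacle in both parts is the commutator between $\sigma^{ij}$ and the exponential factor $e^{q\langle v\rangle^2/(1+t)^\vartheta}$: a priori $|\nabla_v e^{q\langle v\rangle^2/(1+t)^\vartheta}|$ grows like $\langle v\rangle$, which is not beaten by the isotropic part $\sigma^{ij}\sim\langle v\rangle^{-1}$. The resolution is to use the anisotropic structure of $\sigma^{ij}$ (its $\langle v\rangle^{-3}$ decay in the $v/|v|$ direction is what controls $|\nabla_v w_\ell\cdot v/|v||$) and to exploit the smallness assumption $0<q\ll 1$ with $(1+t)^\vartheta\ge 1$, so that the prefactor in front of the remaining troublesome term is uniformly small in time and can be absorbed into $\eta|f|_{\sigma,w_\ell}^2$. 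With these two ingredients in place, the estimates of \cite{Wang-12} transfer without change, yielding \eqref{L_0}, \eqref{Gamma-w} and, in the ungraded and unweighted case, \eqref{Gamma-0}.
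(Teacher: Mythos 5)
The paper does not actually prove this lemma: it is quoted verbatim from \cite{Wang-12} (with the notation $L$, $\Gamma$, $\mu$ standing for $\mathscr{L}$, $\mathscr{T}$, $M$), so there is no in-paper argument to compare against. Your sketch is a faithful reconstruction of the standard proof from that literature (Guo's Landau paper, Strain--Zhu, and Wang), and it correctly identifies the two points on which everything hinges: first, that the troublesome radial growth $|\nabla_v e^{q\langle v\rangle^2/(1+t)^\vartheta}|\sim q\langle v\rangle(1+t)^{-\vartheta}w_\ell$ is paired, in the commutator $\int \sigma^{ij}\partial_j(w_\ell^2)\,f\,\partial_i f$, with the \emph{small} eigenvalue of $\sigma^{ij}$ (since $v$ is an eigenvector of $\sigma$ with eigenvalue $\sim\langle v\rangle^{-3}$ for Coulomb), so that after Cauchy--Schwarz the term is bounded by $q\,|f|^2_{\sigma,w_\ell}$ and absorbed using $q\ll1$; and second, that the compact part $\mathcal{K}$ yields the local residue $C_\eta|\chi_{\{|v|\le 2C_\eta\}}f|^2$ after a large/small velocity split. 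Your explanation of where the factor $l$ and the modified weight $w_l/\langle v\rangle^{-\gamma/2}$ in \eqref{Gamma-w} come from (the derivative landing on the polynomial part of the weight) is also the correct mechanism. This is of course a proof outline rather than a complete proof --- the exact decomposition of $\mathcal{A}$ (which also carries a $\partial_i\sigma^i$ term), the precise anisotropic equivalence $\sigma^{ij}\xi_i\xi_j\approx\langle v\rangle^{-3}|P_v\xi|^2+\langle v\rangle^{-1}|(I-P_v)\xi|^2$, and the convolution estimates on $\Phi^{ij}\ast[\mu^{1/2}(\cdot)]$ would all need to be written out --- but the strategy is sound and consistent with the cited source; I see no gap in it.
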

In what follows, we will collect some inequalities which will be used throughout the rest of this manuscript. The first one is the Sobolev interpolation inequalities.
\begin{lemma}\label{lemma2.2}(\cite{Wang-12})
Let $2\leq p<\infty$ and $k,\ell, m\in\mathbb{R}$, we have
\begin{equation}
\left\|\nabla^k f\right\|_{L^p}\lesssim\left\|\nabla^\ell f\right\|^{\theta}\left\|\nabla^m f\right\|^{1-\theta},
\end{equation}
where $0\leq \theta\leq1$ and $\ell$ satisfy
\begin{equation}
\frac{1}{p}-\frac k3=\left(\frac12-\frac\ell3\right)\theta+\left(\frac12-\frac m3\right)(1-\theta).
\end{equation}
For the case $p=+\infty$, we have
\begin{equation}
\left\|\nabla^k f\right\|_{L^\infty}\lesssim\left\|\nabla^\ell f\right\|^{\theta}\left\|\nabla^m f\right\|^{1-\theta},
\end{equation}
where $0\leq \theta\leq1$ and $\ell$ satisfy
\begin{equation}
-\frac k3=\left(\frac12-\frac\ell3\right)\theta+\left(\frac12-\frac m3\right)(1-\theta).
\end{equation}
Here we need to assume that $\ell\leq k+1$, $m\geq k+2$. Throughout the rest of this manuscript, for each positive integer $k$,
$\left\|\nabla^k f\right\|\sim\sum\limits_{|\alpha|=k}\left\|\partial^\alpha f\right\|.$
\end{lemma}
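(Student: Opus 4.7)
The statement is a Gagliardo--Nirenberg interpolation inequality on $\mathbb{R}^3$, and I will prove it by combining the Fourier characterization of homogeneous Sobolev norms, H\"older's inequality in frequency, and standard Sobolev embeddings. The plan splits naturally into the finite-$p$ case and the $L^\infty$ endpoint.

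\textbf{Step 1 (base case $p=2$).} When $p=2$ the scaling identity reduces to $k=\theta\ell+(1-\theta)m$. By Plancherel,
\[
\|\nabla^kf\|_{L^2}^2 = \int_{\mathbb R^3}|\xi|^{2k}|\hat f(\xi)|^2\,d\xi = \int_{\mathbb R^3}\bigl(|\xi|^{2\ell}|\hat f(\xi)|^2\bigr)^{\theta}\bigl(|\xi|^{2m}|\hat f(\xi)|^2\bigr)^{1-\theta}d\xi,
\]
and H\"older's inequality with the conjugate pair $(1/\theta,1/(1-\theta))$ yields the desired bound $\|\nabla^k f\|_{L^2}\lesssim\|\nabla^\ell f\|^\theta\|\nabla^m f\|^{1-\theta}$.

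\textbf{Step 2 (general $2\le p<\infty$).} I reduce to Step 1 via the homogeneous Sobolev embedding $\dot H^s(\mathbb R^3)\hookrightarrow L^p(\mathbb R^3)$ with $s=\frac32-\frac3p\ge 0$, which follows from the Hardy--Littlewood--Sobolev inequality applied to the Riesz potential. Choose $s=\frac32-\frac3p$; then
\[
\|\nabla^k f\|_{L^p}\lesssim \|\nabla^{k+s}f\|_{L^2}.
\]
Applying Step 1 to $\nabla^{k+s}f$ with the same exponents $\ell,m$ and exponent $\theta'$ determined by $k+s=\theta'\ell+(1-\theta')m$ gives $\|\nabla^{k+s}f\|_{L^2}\lesssim\|\nabla^\ell f\|^{\theta'}\|\nabla^m f\|^{1-\theta'}$. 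A direct algebraic check shows $\theta'$ coincides with the $\theta$ appearing in the stated scaling relation $\tfrac1p-\tfrac k3=\theta(\tfrac12-\tfrac\ell3)+(1-\theta)(\tfrac12-\tfrac m3)$, completing this case.

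\textbf{Step 3 (endpoint $p=\infty$).} This is the genuinely delicate step because $\dot H^{3/2}(\mathbb R^3)$ does not embed into $L^\infty$, so Step 2 cannot be invoked with $p=\infty$ directly; the hypothesis $\ell\le k+1$, $m\ge k+2$ is precisely what allows one to ``split around'' the critical exponent. I will use a Littlewood--Paley (or simply low/high frequency) decomposition $f=f_{<R}+f_{\ge R}$ defined by a smooth frequency cutoff at scale $R$. By Bernstein's inequality,
\[
\|\nabla^k f_{<R}\|_{L^\infty}\lesssim R^{k+3/2-\ell}\|\nabla^\ell f\|_{L^2},\qquad \|\nabla^k f_{\ge R}\|_{L^\infty}\lesssim R^{k+3/2-m}\|\nabla^m f\|_{L^2},
\]
where the exponents $k+\tfrac32-\ell<0$ and $k+\tfrac32-m<0$ under our assumptions $\ell\le k+1$, $m\ge k+2$ (so the two powers of $R$ have opposite signs and one can actually optimize in $R$). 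Optimizing $R$ to balance the two contributions yields the claimed inequality, and the resulting value of $\theta$ matches the stated scaling relation $-\tfrac k3=\theta(\tfrac12-\tfrac\ell3)+(1-\theta)(\tfrac12-\tfrac m3)$.

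\textbf{Main obstacle.} The only nontrivial issue is the $p=\infty$ case; for finite $p$ the argument is routine. The assumption $\ell\le k+1$, $m\ge k+2$ is essential to guarantee that the low-frequency piece is controlled by the $\dot H^\ell$ norm (which requires strict subcriticality) and the high-frequency piece by the $\dot H^m$ norm (which requires strict supercriticality), so that the Bernstein estimates above have the right sign of the $R$-exponents and the low/high decomposition can actually be optimized. Once this decomposition is set up, the rest is bookkeeping of scaling exponents.
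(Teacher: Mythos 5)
The paper does not prove this lemma at all: it is quoted verbatim from \cite{Wang-12} (and ultimately is the standard Gagliardo--Nirenberg interpolation inequality on $\mathbb{R}^3$), so there is no in-paper argument to compare against. Your proof is the standard and correct route: Plancherel plus H\"older in frequency for $p=2$, reduction to $p=2$ via the subcritical embedding $\dot H^{3/2-3/p}\hookrightarrow L^p$ (valid since $p<\infty$ keeps $s<3/2$), and a low/high frequency splitting with optimization in the cutoff $R$ for $p=\infty$; the exponent bookkeeping in Steps 2 and 3 checks out against the stated scaling relations.

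One slip you should fix in Step 3: you assert that both exponents $k+\tfrac32-\ell$ and $k+\tfrac32-m$ are negative, which contradicts your own (correct) parenthetical that the two powers of $R$ have opposite signs. Under $\ell\le k+1$ one has $k+\tfrac32-\ell\ge \tfrac12>0$, which is exactly what makes the low-frequency integral $\int_{|\xi|\le R}|\xi|^{2(k-\ell)}\,d\xi\sim R^{2(k-\ell)+3}$ converge at the origin and produce a \emph{positive} power of $R$; the condition $m\ge k+2$ gives $k+\tfrac32-m\le-\tfrac12<0$ for the high-frequency tail. With that sign corrected, the balancing $R^{m-\ell}=\|\nabla^m f\|/\|\nabla^\ell f\|$ yields $\theta=\tfrac{m-k-3/2}{m-\ell}$, which indeed matches the relation $-\tfrac k3=(\tfrac12-\tfrac\ell3)\theta+(\tfrac12-\tfrac m3)(1-\theta)$, so the proof stands.
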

For the $L^q-L^q$ type estimate on $\Lambda^{-s}f$, we have
\begin{lemma}\label{lemma2.3}(\cite{Stein-1970})
Let $0<s<3$, $1<p<q<\infty$, $\frac1q+\frac s3=\frac1p$, then we have
\begin{equation}
\|\Lambda^{-s}f\|_{L^q}\lesssim\|f\|_{L^p}.
\end{equation}
\end{lemma}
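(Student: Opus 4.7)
\noindent\textbf{Proposal for the proof of Lemma \ref{lemma2.3}.} The plan is to identify $\Lambda^{-s}$ with the Riesz potential and then prove the resulting mapping property by the Hedberg-type pointwise bound combined with the $L^p$-boundedness of the Hardy--Littlewood maximal function. Concretely, since $\Lambda^{-s}$ is by definition the Fourier multiplier with symbol $|\xi|^{-s}$, and since for $0<s<3$ one has the classical identity $\mathcal{F}^{-1}(|\xi|^{-s})=c_{s}|x|^{-(3-s)}$ in the sense of tempered distributions on $\mathbb{R}^3$, we may write
\begin{equation*}
  (\Lambda^{-s}f)(x)=c_s\int_{\mathbb{R}^3}\frac{f(y)}{|x-y|^{3-s}}\,\d y.
\end{equation*}
The goal then reduces to the Hardy--Littlewood--Sobolev inequality for this Riesz potential on $\mathbb{R}^3$.

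Next I would carry out the Hedberg truncation: for any $R>0$, split the integral into the region $|x-y|\le R$ and its complement. On the near piece, bound the kernel against the Hardy--Littlewood maximal function $Mf(x)$ by summing dyadic annuli $\{2^{-k-1}R<|x-y|\le 2^{-k}R\}$, which gives a bound of order $R^{s}Mf(x)$. On the far piece, apply H\"older's inequality with exponents $p$ and $p'$: the kernel $|x-y|^{-(3-s)}$ is in $L^{p'}(\{|x-y|>R\})$ provided $(3-s)p'>3$, i.e.\ $\tfrac{1}{p'}<\tfrac{3-s}{3}$, which is exactly the scaling hypothesis $\tfrac1q+\tfrac{s}{3}=\tfrac1p$ combined with $q<\infty$. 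This yields a bound of order $R^{s-3/p}\|f\|_{L^p}$. Optimizing in $R$ by choosing $R$ so that the two contributions balance, namely $R^{3/p}Mf(x)\sim\|f\|_{L^p}$, produces the pointwise inequality
\begin{equation*}
  |(\Lambda^{-s}f)(x)|\lesssim (Mf(x))^{1-sp/3}\|f\|_{L^p}^{sp/3}.
\end{equation*}

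To finish, I would raise this to the $q$-th power, integrate in $x$, and invoke the $L^p\to L^p$ boundedness of $M$ (valid since $1<p<\infty$), which is available from Stein's reference already cited in the statement; because $(1-sp/3)q=p$ by the scaling hypothesis, one obtains
\begin{equation*}
  \|\Lambda^{-s}f\|_{L^q}\lesssim \|Mf\|_{L^p}^{1-sp/3}\|f\|_{L^p}^{sp/3}\lesssim \|f\|_{L^p}.
\end{equation*}
The main obstacle is handling the borderline behavior of the Riesz kernel at infinity: the far-piece H\"older estimate is only finite because the scaling $\tfrac1q+\tfrac{s}{3}=\tfrac1p$ together with $q<\infty$ forces $p'(3-s)>3$, so the sharp use of the hypotheses $1<p<q<\infty$ is essential and must be tracked carefully. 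Since the result is entirely classical and cited verbatim from Stein's monograph, the cleanest presentation may simply be to reduce to the Riesz potential formulation and then appeal directly to the Hardy--Littlewood--Sobolev inequality in Stein \cite{Stein-1970}.
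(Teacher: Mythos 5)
Your proposal is correct. The paper itself offers no proof of this lemma --- it is quoted verbatim from Stein's monograph as a classical fact --- so there is no in-paper argument to compare against. Your reduction of $\Lambda^{-s}$ to the Riesz potential $c_s|x-y|^{-(3-s)}$ and the subsequent Hedberg truncation (near piece controlled by $R^{s}Mf(x)$ via dyadic annuli, far piece controlled by $R^{s-3/p}\|f\|_{L^p}$ via H\"older, then optimization in $R$ and the $L^p$-boundedness of the maximal function) is the standard proof of the Hardy--Littlewood--Sobolev inequality, and your bookkeeping of the exponents is accurate: the condition $(3-s)p'>3$ is exactly equivalent to $q<\infty$ under the scaling relation, and $(1-sp/3)q=p$ closes the argument. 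As you note at the end, for the purposes of this paper a direct citation suffices, but your self-contained argument is a valid substitute.
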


Based on Lemma \ref{lemma2.2} and Lemma \ref{lemma2.3}, we have the following corollary which will be used frequently later
\begin{corollary}\label{corrollary}
Let $0<s<\frac 32$, then for any positive integer $k$ and any nonnegative integer $j$ satisfying $0\leq j\leq k$, we have
$$
\|f\|_{L^{3}_x}\lesssim \left\|\Lambda^{\frac34-\frac s2}f\right\|,\quad \|f\|_{L^{\frac 3s}_x}\leq \left\|\Lambda^{\frac12}f\right\|,
$$
$$
\|f\|_{L_x^\infty}\lesssim\left\|\Lambda^{-\frac12}f\right\|^{\frac{2k-1}{2(k+1+s)}}
\left\|\nabla^{k+1}f\right\|^{\frac{3+2s}{2(k+1+s)}},
$$
$$
\left\|\nabla^j f\right\|_{L^6_x}\lesssim \left\|\Lambda^{-\frac12}f\right\|^{\frac{k-j}{k+1+s}} \left\|\nabla^{k+1}f\right\|^{\frac{j+s+1}{k+1+s}},
$$
and
$$
\left\|\nabla^jf\right\|_{L^3_x}\lesssim \left\|\Lambda^{-\frac12}f\right\|^{\frac{2k-2j+1}{2k+2+2s}} \left\|\nabla^{k+1}f\right\|^{\frac{2j+2s+1}{2k+2+2s}}.
$$
\end{corollary}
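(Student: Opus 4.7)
The plan is to deduce all five inequalities directly from the two ingredients already recorded in the Appendix: the Riesz potential / Hardy--Littlewood--Sobolev embedding of Lemma \ref{lemma2.3} and the Gagliardo--Nirenberg interpolation of Lemma \ref{lemma2.2}, the latter extended from integer to fractional (and negative) Sobolev exponents via Plancherel. Since $\|\Lambda^r f\|^2 = \int_{\R^3} |\xi|^{2r}|\hat f(\xi)|^2\,d\xi$, Hölder's inequality in the frequency variable yields for any $\ell < r < m$ and $\theta\in(0,1)$ with $r = \ell\theta + m(1-\theta)$
\[
\|\Lambda^r f\| \;\lesssim\; \|\Lambda^\ell f\|^{\theta}\,\|\Lambda^m f\|^{1-\theta},
\]
which is the fractional Gagliardo--Nirenberg inequality we will repeatedly combine with Lemma \ref{lemma2.3}.

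The first two embeddings are a direct application of Lemma \ref{lemma2.3}: writing $f = \Lambda^{-\alpha}(\Lambda^\alpha f)$ with $\alpha = \tfrac{3}{4}-\tfrac{s}{2}$ (respectively $\alpha = \tfrac{1}{2}$) and taking $p=2$, $q=3$ (respectively $q = 3/s$), the scaling identity $\tfrac1q+\tfrac{\alpha}{3} = \tfrac12$ is met and the conclusion is immediate.

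For the three interpolation-type bounds on $\|f\|_{L^\infty_x}$, $\|\nabla^j f\|_{L^6_x}$ and $\|\nabla^j f\|_{L^3_x}$, I would proceed in two steps. First, use Lemma \ref{lemma2.3} (or the standard Sobolev embedding $\dot H^1 \hookrightarrow L^6$) to reduce the left-hand side to an intermediate $L^2$-based homogeneous norm $\|\Lambda^r f\|$: concretely, $\|\nabla^j f\|_{L^6_x} \lesssim \|\Lambda^{j+1}f\|$, $\|\nabla^j f\|_{L^3_x} \lesssim \|\Lambda^{j+1/2}f\|$, and $\|f\|_{L^\infty_x}\lesssim \|\Lambda^{3/2+\kappa}f\|$ for an arbitrarily small $\kappa>0$ chosen so that the subsequent scaling matches. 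Secondly, interpolate $\|\Lambda^r f\|$ between the two endpoints $\|\Lambda^{-1/2}f\|$ and $\|\nabla^{k+1}f\|$ via the Plancherel--Hölder inequality displayed above with $\ell = -\tfrac12$, $m = k+1$; the convexity relation $r = -\tfrac12\theta + (k+1)(1-\theta)$ determines $\theta$ uniquely.

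The only real work is algebraic bookkeeping: one has to pick the Sobolev reduction step so that the composition of the two scalings produces exactly the stated exponents $\tfrac{k-j}{k+1+s}$, $\tfrac{j+s+1}{k+1+s}$, $\tfrac{2k-1}{2(k+1+s)}$ and $\tfrac{3+2s}{2(k+1+s)}$. Since each reduction is dictated by homogeneity, there is no genuine analytic obstacle; the main point to be careful about is that the intermediate exponent $r$ lies strictly between $-\tfrac12$ and $k+1$ (ensuring $\theta\in(0,1)$), which follows from the assumption $0<s<\tfrac32$ and $0\le j\le k$ with $k\ge 1$. This is essentially the only restriction on the parameters and it matches the range stated in the corollary.
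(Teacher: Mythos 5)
The paper itself offers no written proof of this corollary---it is asserted to follow from Lemma \ref{lemma2.2} and Lemma \ref{lemma2.3}---so your overall strategy (Hardy--Littlewood--Sobolev for the first two bounds, Gagliardo--Nirenberg/Plancherel interpolation between $\Lambda^{-1/2}$ and $\nabla^{k+1}$ for the other three) is exactly the intended route. However, two of your concrete steps do not survive a scaling check. First, for the opening embeddings you assert that the relation $\tfrac1q+\tfrac{\alpha}{3}=\tfrac12$ of Lemma \ref{lemma2.3} ``is met'' with $\alpha=\tfrac34-\tfrac{s}{2}$, $q=3$ and with $\alpha=\tfrac12$, $q=\tfrac3s$. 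It is not: $q=3$ forces $\alpha=\tfrac12$ and $q=\tfrac3s$ forces $\alpha=\tfrac32-s$, so your claim holds only at $s=\tfrac12$ (resp.\ $s=1$). You should have verified the identity rather than asserted it; dilation $f\mapsto f(\lambda\cdot)$ shows no other exponent is possible.

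Second, and more seriously, the reduction $\|f\|_{L^\infty_x}\lesssim\|\Lambda^{3/2+\kappa}f\|$ is false as a homogeneous estimate for \emph{every} $\kappa$: for $\kappa>0$ it fails under $f\mapsto f(\lambda\cdot)$ as $\lambda\to0$, and for $\kappa=0$ it is the failed endpoint embedding $\dot H^{3/2}\not\hookrightarrow L^\infty$. There is no ``arbitrarily small $\kappa$ chosen so that the scaling matches''---matching requires $\kappa=0$ exactly. The $L^\infty$ bound cannot be obtained by a one-sided embedding followed by $L^2$-frequency interpolation; it requires the genuinely two-sided Gagliardo--Nirenberg inequality, i.e.\ the $p=+\infty$ case of Lemma \ref{lemma2.2} applied directly with $\ell=-s$ and $m=k+1$ (one checks $-\tfrac{0}{3}=(\tfrac12+\tfrac{s}{3})\theta+(\tfrac12-\tfrac{k+1}{3})(1-\theta)$ gives $\theta=\tfrac{2k-1}{2(k+1+s)}$, as stated). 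The same one-shot application of Lemma \ref{lemma2.2} with $p=6$ and $p=3$ produces $\theta=\tfrac{k-j}{k+1+s}$ and $\theta=\tfrac{2k-2j+1}{2k+2+2s}$; note that your two-step route through $\|\Lambda^{j+1}f\|$ and the endpoints $\Lambda^{-1/2}$, $\Lambda^{k+1}$ yields these exponents only with $s$ replaced by $\tfrac12$, so to recover the displayed formulas the lower endpoint must be taken as $\Lambda^{-s}$ (the corollary as printed is dimensionally consistent only at $s=\tfrac12$, which is the case actually used in the paper).
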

In many places, we will use Minkowski's integral inequality to interchange the orders of integration over $x$ and $v$.
\begin{lemma}\label{lemma2.4}(\cite{Guo-Wang-CPDE-2012})
For $1\leq p\leq q\leq \infty$, we have
\begin{equation}
\|f\|_{L^q_xL^p_v}\leq \|f\|_{L^p_vL^q_x}.
\end{equation}
\end{lemma}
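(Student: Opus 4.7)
The plan is to reduce the mixed-norm inequality to the classical Minkowski integral inequality via a power-change trick, and to handle the boundary cases separately.

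First I would dispose of the trivial cases. When $p=q$, both sides equal $\|f\|_{L^p_{x,v}}$ by Fubini's theorem, so equality holds. When $q=\infty$, the inequality reads $\|\,\|f(\cdot,v)\|_{L^p_v}\,\|_{L^\infty_x}\le \|\,\|f(x,\cdot)\|_{L^\infty_x}\,\|_{L^p_v}$, which follows immediately from $|f(x,v)|\le \|f(\cdot,v)\|_{L^\infty_x}$ pointwise in $v$, by taking $L^p_v$ norm and then $L^\infty_x$. Similarly when $p=1$ one may invoke Fubini directly.

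For the main range $1<p<q<\infty$, I would set $r=q/p\ge 1$ and $g(x,v)=|f(x,v)|^p\ge 0$. Then by definition
\begin{equation*}
\|f\|_{L^q_xL^p_v}^{\,p}=\left(\int_{\mathbb{R}^3_x}\Big(\int_{\mathbb{R}^3_v}g(x,v)\,dv\Big)^{r}dx\right)^{1/r},
\qquad
\|f\|_{L^p_vL^q_x}^{\,p}=\int_{\mathbb{R}^3_v}\Big(\int_{\mathbb{R}^3_x}g(x,v)^{r}dx\Big)^{1/r}dv.
\end{equation*}
The inequality $\|f\|_{L^q_xL^p_v}\le \|f\|_{L^p_vL^q_x}$ is therefore equivalent to the classical Minkowski integral inequality
\begin{equation*}
\left(\int_{\mathbb{R}^3_x}\Big(\int_{\mathbb{R}^3_v}g(x,v)\,dv\Big)^{r}dx\right)^{1/r}
\le \int_{\mathbb{R}^3_v}\Big(\int_{\mathbb{R}^3_x}g(x,v)^{r}dx\Big)^{1/r}dv
\end{equation*}
applied to the nonnegative function $g$ with exponent $r\ge 1$.

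The remaining task is to prove this classical Minkowski integral inequality, which I would do by duality. Let $r'=r/(r-1)$ be the conjugate exponent. Using the characterization
$\|F\|_{L^r_x}=\sup\{\,\int F(x)h(x)\,dx:\ h\ge 0,\ \|h\|_{L^{r'}_x}=1\,\}$
for $F(x)=\int g(x,v)\,dv$, Fubini's theorem gives $\int F(x)h(x)\,dx=\int\!\!\int g(x,v)h(x)\,dx\,dv$, and H\"older's inequality in $x$ yields $\int g(x,v)h(x)\,dx\le \|g(\cdot,v)\|_{L^r_x}\|h\|_{L^{r'}_x}=\|g(\cdot,v)\|_{L^r_x}$. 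Integrating in $v$ and taking supremum over $h$ closes the argument. Raising to the $1/p$ power delivers the stated inequality.

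The main obstacle, and indeed the only non-mechanical point, is the duality step: one must ensure $g(\cdot,v)\in L^r_x$ for a.e.\ $v$ and the right-hand side is finite (otherwise the conclusion is vacuous), and the interchange of order of integration via Fubini is legitimate since the integrand is nonnegative. All intermediate quantities are nonnegative, so no measurability subtleties arise beyond standard Fubini--Tonelli, and the proof is complete.
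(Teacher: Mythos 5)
Your proof is correct, but note that the paper itself offers no proof of this lemma at all: it is stated with a citation to \cite{Guo-Wang-CPDE-2012} and used as a black box (``Minkowski's integral inequality to interchange the orders of integration over $x$ and $v$''). What you have written is the standard self-contained derivation: the substitution $g=|f|^p$, $r=q/p\ge 1$ correctly reduces the mixed-norm inequality to the classical Minkowski integral inequality $\bigl\|\int g(\cdot,v)\,dv\bigr\|_{L^r_x}\le\int\|g(\cdot,v)\|_{L^r_x}\,dv$, and the duality-plus-H\"older-plus-Tonelli argument for that inequality is sound (the nonnegativity of $g$ makes Tonelli applicable, and if the right-hand side is infinite there is nothing to prove). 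The only slip is your claim that the case $p=1$ ``may invoke Fubini directly'': for $p=1<q<\infty$ the assertion \emph{is} the classical Minkowski integral inequality with exponent $q$, which Fubini alone does not give. This is harmless, since your main argument with $r=q$ and $g=|f|$ applies verbatim to $p=1$, but you should either delete that sentence or fold $p=1$ into the range $1\le p<q<\infty$ treated by the duality argument. With that relabeling, every case $1\le p\le q\le\infty$ is covered.
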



\section*{Acknowledgment}

\bigskip

\bibliography{reference}

\begin{thebibliography}{99}\small













%












%







\bibitem{AV-CPAM2002} R. Alexandre and C. Villani,  On the Boltzmann equation for
long-range interaction.  {\em Commun. Pure and Appl. Math.} {\bf 55} (2002),  30-70.

\bibitem{Arsenio} D. Ars\'enio,
 From Boltzmann's Equation to the incompressible Navier-Stokes-Fourier system with long-range interactions. {\em Arch. Ration. Mech. Anal.} {\bf 206} (2012), no. 3, 367-488

\bibitem{AIM-ARMA-2015} D. Ars\'enio, S. Ibrahim and N. Masmoudi,
A derivation of the magnetohydrodynamic system from Navier-Stokes-Maxwell systems. {\em Arch. Ration. Mech. Anal.} {\bf 216} (2015), no. 3, 767-812.

\bibitem{Arsenio-Masmoudi} D. Ars\'enio and N. Masmoudi,
Regularity of renormalized solutions in the Boltzmann equation with long-range interactions. {\em Comm. Pure Appl. Math.} {\bf 65} (2012), no. 4, 508-548.

\bibitem{Arsenio-SRM} D. Ars\'enio, L. Saint-Raymond,
Compactness in kinetic transport equations and hypoellipticity. {\em J. Funct. Anal.} {\bf 261} (2011), no. 10, 3044-3098.

\bibitem{Arsenio-SRM-2016} D. Ars\'enio and L. Saint-Raymond,
{\em From the Vlasov-Maxwell-Boltzmann system to incompressible viscous electro-magneto-hydrodynamics. Vol. 1}. EMS Monographs in Mathematics. European Mathematical Society (EMS), Zurich, 2019.

\bibitem{BGL-1991-JSP} C. Bardos, F. Golse and C.-D. Levermore,
Fluid dynamic limits of kinetic equations I: formal derivation. {\em J. Stat. Phys.}, {\bf 63} (1991), 323-344.

\bibitem {BGL-CPAM1993} C. Bardos, F. Golse, and C.-D. Levermore,
Fluid Dynamic Limits of Kinetic Equations II: Convergence Proof
for the Boltzmann Equation, {\em Commun. Pure and Appl. Math.} {\bf 46}
(1993), 667-753

\bibitem {BGL3} C. Bardos, F. Golse, and C.-D. Levermore,
The acoustic limit for the Boltzmann equation. {\em Arch. Ration.
Mech. Anal.} {\bf 153} (2000), no. 3, 177-204

\bibitem{b-u}
C. Bardos and S. Ukai, The classical incompressible Navier-Stokes limit of the Boltzmann equation.  {\em Math. Models Methods Appl. Sci.} {\bf 1} (1991), no.2, 235-257.

\bibitem{Biskamp} D. Biskamp,
{\em Nonlinear magnetohydrodynamics.} Cambridge Monographs on Plasma Physics, {\bf 1}. Cambridge University Press, Cambridge, 1993.

\bibitem{Boyer-Fabrie-2013BOOK} F. Boyer and P. Fabrie,
{\em Mathematical tools for the study of the incompressible Navier-Stokes equations and related models.} Applied Mathematical Sciences, {\bf 183}, Springer, New York, 2013.

\bibitem{Briant-JDE-2015} M. Briant,
From the Boltzmann equation to the incompressible Navier-Stokes equations on the torus: a quantitative error estimate. {\em J. Differential Equations}, {\bf 259} (2015), no. 11, 6072-6141.

\bibitem{Briant2017} M. Briant,
Perturbative theory for the Boltzmann equation in bounded domains with different boundary conditions. {\em Kinet. Relat. Models} {\bf 10} (2017), no. 2, 329-371.

\bibitem{Briant-Guo} M. Briant and Y. Guo,
Asymptotic stability of the Boltzmann equation with Maxwell boundary conditions. {\em J. Differential Equations} {\bf 261} (2016), no. 12, 7000-7079.

\bibitem{BMM-arXiv-2014} M. Briant, S. Merino-Aceituno and C. Mouhot,
From Boltzmann to incompressible Navier-Stokes in Sobolev spaces with polynomial weight. {\em Anal. Appl. (Singap.)} {\bf 17} (2019), no. 1, 85-116.

\bibitem{Caflisch}R. Caflisch,
The fluid dynamic limit of the nonlinear Boltzmann equation. {\em Comm. Pure Appl. Math.}, {\bf 33} (1980), no. 5, 651-666.


\bibitem{Davidson} P.-A. Davidson,
{\em An introduction to magnetohydrodynamics. }
Cambridge Texts in Applied Mathematics. Cambridge University Press, Cambridge, 2001.


\bibitem{DEL-89}
A. De Masi, R. Esposito, and J.-L. Lebowitz,
{\it Incompressible Navier-Stokes and Euler limits of the Boltzmann equation.}
 Comm. Pure Appl. Math. {\bf 42} (1989), no. 8, 1189-1214.

\bibitem{DL-CPAM1989} R. DiPerna and P.-L. Lions,
Global weak solutions of Vlasov-Maxwell systems. {\em Comm. Pure Appl. Math.} {\bf 42} (1989), no. 6, 729-757.

\bibitem{DL-Annals1989} R. DiPerna and P.-L. Lions,
On the Cauchy problem for Boltzmann equations: global existence and weak stability. {\em Ann. of Math.} (2) {\bf 130} (1989), no. 2, 321-366.
\bibitem{Duan-VML} R.-J. Duan, Global smooth dynamics of a fully ionized plasma with long-range collisions. {\em Ann. Inst. H. Poincari Anal. Non Lineaire} {\bf 31} (2014), no. 4, 751--778.

\bibitem{Duan-Yang-Zhao-MMMAS-2013}R.-J. Duan, T. Yang and H. -J. Zhao, The Vlasov-Poisson-Boltzmann system for soft potentials. {\em Math. Models Methods Appl. Sci.} {\bf 23} (2013), no. 6, 979--1028.

\bibitem{DLYZ-KRM2013} R.-J. Duan, S.-Q. Liu, T. Yang and H. -J. Zhao,
Stability of the nonrelativistic Vlasov-Maxwell-Boltzmann system for angular non-cutoff potentials. {\em Kinet. Relat. Models} {\bf 6} (2013), no. 1, 159-204.

\bibitem{DLYZ-CMP2017} R.-J. Duan, Y.-J. Lei, T. Yang and H.-J. Zhao,
The Vlasov-Maxwell-Boltzmann system near Maxwellians in the whole space with very soft potentials. {\em Comm. Math. Phys.} {\bf 351} (2017), no. 1, 95-153.







\bibitem{DS-CPAM-11} R.-J. Duan and  R.-M. Strain, Optimal large-time behavior of the Vlasov-Maxwell-Boltzmann system in the whole space. \textit{Comm. Pure. Appl. Math.} \textbf{24} (2011), no. 11, 1497--1546.
\bibitem{Hosono-Kawashima-2006} T. Hosono and S. Kawashima, Decay property of regularity-loss type and application to
some nonlinear hyperbolic-elliptic system. {\it Math. Models Methods Appl. Sci.} {\bf 16} (2006), no.
11, 1839--1859.

\bibitem{Hsiao-Yu}L. Hsiao and H.-J. Yu, On the Cauchy problem of the Boltzmann and Landau equations with
soft potentials. {\it Quart. Appl. Math.} {\bf 65} (2007), no. 2, 281--315.

\bibitem{FLLZ-2018} Y.-Z. Fan,  Y.-J. Lei, S.-Q. Liu and H.-J. Zhao,
The non-cutoff Vlasov-Maxwell-Boltzmann system with weak angular singularity. {\em Sci. China Math.} {\bf 61} (2018), no. 1, 111-136.

\bibitem{GIM2014} P. Germain, S. Ibrahim, and N. Masmoudi,
Well-posedness of the Navier-Stokes-Maxwell equations. {\em Proc. Roy. Soc. Edinburgh Sect. A} {\bf 144} (2014), no. 1, 71-86.


\bibitem{GL} F. Golse and C.-D. Levermore,
The Stokes-Fourier and acoustic limits for the Boltzmann
equation. {\em Comm. on Pure and Appl. Math.} {\bf 55} (2002), 336-393.

\bibitem{G-SRM-Invent2004} F. Golse and L. Saint-Raymond,
The Navier-Stokes limit of the Boltzmann equation for bounded collision kernels.  {\em Invent. Math.} {\bf 155} (2004), no. 1, 81--161.

\bibitem{G-SRM2009} F. Golse and L. Saint-Raymond,
The Incompressible Navier-Stokes Limit of the Boltzmann
Equation for Hard Cutoff Potentials.  {\em J. Math. Pures Appl.} (9) {\bf
91} (2009), no. 5, 508--552.


\bibitem{GMM} M.-P. Gualdani, S. Mischler and C. Mouhot,
Factorization for non-symmetric operators and exponential H-theorem. {\em M\'em. Soc. Math. Fr. (N.S.)} No. {\bf 153} (2017).

\bibitem{GJL-2019} M. Guo, N. Jiang and Y-L. Luo,
From Vlasov-Poisson-Boltzmann system to incompressible Navier-Stokes-Fourier-Poisson system: convergence for classical solutions. arXiv:2006.16514v2 [math.AP], to appear on {\em Asymptot. Anal.}

\bibitem{Guo-CMP-02} Y. Guo, The Landau equation in a periodic box. {\it Comm. Math. Phys.} \textbf{231} (2002), 391--434.



\bibitem{Guo-2003-Invent} Y. Guo,
The Vlasov-Maxwell-Boltzmann system near Maxwellians. {\em Invent. Math.} {\bf 153}(2003), no. 3, 593-630.



\bibitem{Guo-IUMJ-2004} Y. Guo , The Boltzmann equation in the whole space. \textit{Indiana Univ. Math. J.} \textbf{53}(2004), 1081--1094.
\bibitem{Guo-2006-CPAM} Y. Guo,
Boltzmann diffusive limit beyond the Navier-Stokes approximation. {\em Comm. Pure Appl. Math.} {\bf 59}(2006), no. 5, 626-687.

\bibitem{GJJ-KRM2009} Y. Guo, J. Jang and N. Jiang,
Local Hilbert expansion for the Boltzmann equation. {\em Kinet. Relat. Models} {\bf 2} (2009), no. 1, 205-214.

\bibitem{GJJ-CPAM2010} Y. Guo, J. Jang and N. Jiang,
Acoustic limit for the Boltzmann equation in optimal scaling. {\em Comm. Pure Appl. Math.} {\bf 63} (2010), no. 3, 337-361.
\bibitem{Guo-2012-JAMS}Y. Guo, The Vlasov-Poisson-Landau system in a periodic box.{\em  J. Amer. Math. Soc.} {\bf 25} (2012), no. 3, 759--812.

\bibitem{Guo-Wang-CPDE-2012} Y. Guo  and  Y.-J Wang, Decay of dissipative equation and negative sobolev spaces. \textit{Comm.Partial Differential Equations} \textbf{37} (2012), 2165--2208.


\bibitem{Ibrahim-Keraani-2011-SIMA} S. Ibrahim  and S. Keraani,
Global small solutions for the Navier-Stokes-Maxwell system. {\em SIAM J. Math. Anal.} {\bf 43} (2011), no. 5, 2275-2295.

\bibitem{Ibrahim-Yoneda-2012-JMAA}S. Ibrahim and T. Yoneda,
Local solvability and loss of smoothness of the Navier-Stokes-Maxwell equations with large initial data. {\em J. Math. Anal. Appl.} {\bf 396} (2012), no. 2, 555-561.

\bibitem{Jang-ARMA2008} J. Jang,
Vlasov-Maxwell-Boltzmann diffusive limit. {\em Arch. Ration. Mech. Anal.} {\bf 194} (2009), no. 2, 531-584.

\bibitem{JJ-DCDS2009} J. Jang and N. Jiang,
Acoustic limit of the Boltzmann equation: classical solutions. {\em Discrete Contin. Dyn. Syst.} {\bf 25} (2009), no. 3, 869-882.

\bibitem{Jang-Masmoudi-SIMA2012} J. Jang and N. Masmoudi,
Derivation of Ohm's law from the kinetic equations.  {\em SIAM J. Math. Anal.} {\bf 44} (2012), no. 5, 3649-3669.

\bibitem{JLM-CPDE2010} N. Jiang, C.-D. Levermore and N. Masmoudi,
Remarks on the acoustic limit for the Boltzmann equation. {\em Comm. Partial Differential Equations} {\bf 35} (2010), no. 9, 1590-1609.

\bibitem{JLei-2023} N. Jiang and Y.-J. Lei, 
The incompressible Navier-Stokes-Fourier-Maxwell system limits of the Vlasov-Maxwell-Boltzmann system for soft potentials: the noncutoff cases and cutoff cases. 	{\em arXiv:2308.12607 [math.AP]}

\bibitem{JL-CMS-2018} N. Jiang and Y-L. Luo,
Global classical solutions to the two-fluid incompressible Navier-Stokes-Maxwell system with Ohm's law. {\em  Commun. Math. Sci. }, {\bf 16} (2018), no. 2, 561-578.
\bibitem{Jiang-Luo-2022-Ann.PDE} N. Jiang and Y.L. Luo, , From Vlasov-Maxwell-Boltzmann system to two-fluid incompressible Navier-Stokes-Fourier-Maxwell system with Ohm's law: convergence for classical solutions. {\em Ann. PDE} 8 (2022), no. 1, Paper No. 4, 126 pp.

\bibitem{JLZ-VMB-2020} N. Jiang, Y-L. Luo and T-F. Zhang,
Incompressible Navier-Stokes-Fourier-Maxwell system with Ohm's law limit from Vlasov-Maxwell-Boltzmann system: Hilbert expansion approach. {\em Arch. Ration. Mech. Anal.} ({\bf 247}) (2023), no. 3, Paper No. 55, 85 pp.

\bibitem{JM-CPAM2017} N. Jiang and N. Masmoudi,
Boundary layers and incompressible Navier-Stokes-Fourier limit of the Boltzmann equation in bounded domain I. {\em Comm. Pure Appl. Math.} {\bf 70} (2017), no. 1, 90-171.

\bibitem{JX-SIMA2015}N. Jiang and L.-J. Xiong,
Diffusive limit of the Boltzmann equation with fluid initial layer in the periodic domain. {\em SIAM J. Math. Anal.} {\bf 47} (2015), no. 3, 1747-1777.

\bibitem{JXZ-Indiana2018} N. Jiang, C-J, Xu and H. Zhao,
Incompressible Navier-Stokes-Fourier limit from the Boltzmann equation: classical solutions.   {\em Indiana University Mathematical Journal.} {\bf 67} (2018), no. 5, 1817-1855

\bibitem{JZ-2020} N. Jiang and X. Zhang,
Sensitivity analysis and incompressible Navier-Stokes-Poisson limit of Vlasov-Poisson-Boltzmann equations with uncertainty. arXiv:2007.00879 [math.AP]

\bibitem{KMN} S. Kawashima, A. Matsumura and T. Nishida,
On the fluid dynamical approximation to the Boltzmann equation at the level of the Navier-Stokes equation. {\em Comm. Math. Phys.}, {\bf 70} (1979), 97-124.
\bibitem{Lei-Zhao-JFA-2014}Y.-J. Lei and  H.-J. Zhao, Negative Sobolev spaces and the two-species Vlasov-Maxwell-Landau system in the whole space. {\it J. Funct. Anal.} {\bf 267} (2014), no. 10, 3710--3757.
\bibitem{LM} C.-D. Levermore and N. Masmoudi,
From the Boltzmann equation to an incompressible Navier-Stokes-Fourier system.
{\em Arch. Ration. Mech. Anal.}, {\bf 196} (2010), no. 3, 753-809.

\bibitem{Levermore-Sun-2010-KRM} C.-D. Levermore and W. Sun,
Compactness of the gain parts of the linearized Boltzmann operator with weakly cutoff kernels. {\em Kinet. Relat. Models}, {\bf 3} (2010), no. 2, 335-351


\bibitem{Lions-Kyoto1994} P.-L. Lions,
Compactness in Boltzmann's equation via Fourier integral operators and applications. I, II. {\em J. Math. Kyoto Univ.} {\bf 34} (1994), no. 2, 391-427, 429-461.

\bibitem{Lions-Kyoto1994-2} P.-L. Lions,
Compactness in Boltzmann's equation via Fourier integral operators and applications. III. {\em J. Math. Kyoto Univ.} {\bf 34} (1994), no. 3, 539-584.

\bibitem{Lions-1996} P.-L. Lions,
{\em Mathematical topics in fluid mechanics. Vol. 1. Incompressible models.} Oxford Lecture Series in Mathematics and its Applications, 3. Oxford Science Publications. The Clarendon Press, Oxford University Press, New York, 1996.

\bibitem{LM3} P.-L. Lions and N. Masmoudi,
From Boltzmann equation to Navier-Stokes and Euler equations
I. {\em Arch. Ration. Mech. Anal.}, {\bf 158} (2001), 173-193.

\bibitem{LM4} P.-L. Lions and N. Masmoudi,
From Boltzmann equation to Navier-Stokes and Euler equations
II. {\em Arch. Ration. Mech. Anal.}, {\bf 158} (2001), 195-211.

\bibitem{Liu-Yang-Yu-PhysicaD} T-P. Liu, T. Yang and S-H. Yu,
Energy method for Boltzmann equation. {\em Phys. D} {\bf 188} (2004), no. 3-4, 178-192.

\bibitem{Masmoudi-JMPA2010} N. Masmoudi,
Global well posedness for the Maxwell-Navier-Stokes system in 2D. {\em J. Math. Pures Appl. (9)}, {\bf 93} (2010), no. 6, 559-571.

\bibitem{Masmoudi-SRM-CPAM2003} N. Masmoudi and L. Saint-Raymond,
From the Boltzmann equation to the Stokes-Fourier system in a bounded domain.
{\em Comm. Pure Appl. Math.} {\bf 56} (2003), no. 9, 1263-1293.


\bibitem{mischler2010asens}	S.~Mischler,
\newblock Kinetic equations with {M}axwell boundary conditions.
\newblock \emph{Ann. Sci. \'Ec. Norm. Sup\'er. (4)} \textbf{43} (2010), 719-760.

\bibitem{Mischler-Mouhot} S. Mischler and C. Mouhot,
Kac's program in kinetic theory.  {\em Invent. Math.} {\bf 193} (2013), no. 1, 1-147.

\bibitem{Nishida} T. Nishida,
Fluid dynamical limit of the nonlinear Boltzmann equation to the level of the compressible Euler equation. {\em Comm. Math. Phys.}, {\bf 61} (1978), 119-148.

\bibitem{SRM2010} L. Saint-Raymond,
Some recent results about the sixth problem of Hilbert: hydrodynamic limits of the Boltzmann equation. {\em  European Congress of Mathematics}, 419-439, Eur. Math. Soc., Zurich, 2010.

\bibitem{Simon-1987-AMPA} J. Simon,
Compact sets in the space $L^p (0,T;B)$. {\em Ann. Mat. Pura Appl.}, {\bf 146} (1987), no. 4, 65-96.
\bibitem{Stein-1970}E.M. Stein, {\it Singular Integrals and Differentiability Properties of Functions.} Princeton, NJ: Princeton University Press, 1970.

\bibitem{Strain-Zhu-2012-ARMA} R.-M. Strain  and K.-Y. Zhu, The Vlasov-Poisson-Landau System in ${\mathbb{R}}^3_x$. {\it Arch. Ration. Mech. Anal.} {\bf 210} (2013), no. 2, 615--671.
\bibitem{Wang-12} Y.-J. Wang, Golobal solution and time decay of the Vlasov-Poisson-Landau System in ${\mathbb{R}}^3_x$. {\it SIAM J. Math. Anal.} \textbf{44} (2012), no. 5, 3281--3323.


\end{thebibliography}

\end{document}